\newcommand\blfootnote[1]{%
	\begingroup
	\renewcommand\thefootnote{}\footnote{#1}%
	\addtocounter{footnote}{-1}%
	\endgroup}
\theoremstyle{plain}
\newtheorem{theorem}{Theorem}[section]
\newtheorem{proposition}[theorem]{Proposition}
\newtheorem{lemma}[theorem]{Lemma}
\newtheorem{corollary}[theorem]{Corollary}
\newtheorem*{claim}{Claim}
\theoremstyle{definition}
\newtheorem{definition}[theorem]{Definition}
\newtheorem{remark}[theorem]{Remark}
\newtheorem{example}[theorem]{Example}
\newtheorem{question}[theorem]{Question}
\newcommand{\NN}{\mathbb{N}}
\newcommand{\ZZ}{\mathbb{Z}}
\newcommand{\RR}{\mathbb{R}}
\newcommand{\CC}{\mathbb{C}}
\newcommand{\KK}{\mathbb{K}}
\newcommand{\DD}{\mathbb{D}}
\newcommand{\TT}{\mathbb{T}}
\newcommand{\Kc}{\mathcal{K}}
\newcommand{\Lc}{\mathcal{L}}
\newcommand{\Uc}{\mathcal{U}}
\newcommand{\Rec}{\textup{Rec}}
\newcommand{\HC}{\textup{HC}}
\newcommand{\cl}{\overline}
\newcommand{\orb}{\textup{Orb}}
\newcommand{\lspan}{\textup{span}}
\newcommand{\res}{\arrowvert}
\newcommand{\eps}{\varepsilon}
\newcommand{\Ran}{\textup{Ran}}
\newcommand{\Ker}{\textup{Ker}}
\newcommand{\codim}{\operatorname{codim}}
\newcommand{\ind}{\textup{ind}}
\newcommand{\til}{\widetilde}
\newcommand{\ep}[2]{\left\langle #1 , #2 \right\rangle}
\newcommand{\QEDh}{\pushQED{\qed}\qedhere} 
\begin{document}
\begin{center}
	\begin{LARGE}
		{\bf Recurrent subspaces in Banach spaces}
	\end{LARGE}
\end{center}

\begin{center}
	\begin{Large}
		by
	\end{Large}
\end{center}

\begin{center}
	\begin{Large}
		Antoni L\'opez-Mart\'inez\blfootnote{\textbf{2020 Mathematics Subject Classification}: 47A16, 47A53, 37B20.\\ \textbf{Key words and phrases}: Linear dynamics, hypercyclic vectors, recurrent vectors, spaceability.\\ \textbf{Journal-ref}: International Mathematics Research Notices, Volume 2024, Issue 11, 2024, Pages 9067--9087.\\ \textbf{DOI}: https://doi.org/10.1093/imrn/rnad321}
	\end{Large}
\end{center}

\vspace*{0.1in}

\begin{abstract}
	We study the {\em spaceability} of the set of recurrent vectors $\Rec(T)$ for an operator $T:X\longrightarrow X$ on a Banach space $X$. In particular: we find sufficient conditions for a quasi-rigid operator to have a recurrent subspace; when $X$ is a complex Banach space we show that having a recurrent subspace is equivalent to the fact that the essential spectrum of the operator intersects the closed unit disc; and we extend the previous result to the real case. As a consequence we obtain that: {\em a weakly-mixing operator on a real or complex separable Banach space has a hypercyclic subspace if and only if it has a recurrent subspace}. The results exposed exhibit a symmetry between the hypercyclic and recurrent spaceability theories showing that, at least for the {\em spaceable} property, hypercyclicity and recurrence can be treated as equals.
\end{abstract}

\vspace*{0.1in}

\section{Introduction}

In {\em Linear Dynamics} the interest in the {\em hypercyclicity} notion comes from the very famous {\em invariant subset problem}, which remains open for operators acting on Hilbert spaces: an operator has no non-trivial invariant closed subset if and only if each non-zero vector is hypercyclic for it. This motivated the study of the structure of the set of hypercyclic vectors, and it is well-known due to Herrero and Bourdon that such a set is always {\em dense lineable}, i.e.\ every hypercyclic operator admits a (non-closed) dense invariant subspace that consists (except for the zero-vector) entirely of hypercyclic vectors (see \cite[Theorem~2.55]{GrPe2011}). The {\em spaceability} (i.e.\ the property of admitting an infinite-dimensional closed subspace) of such a set has also been deeply studied and for case of operators acting on Banach spaces we will follow the basic references \cite{Montes1996,LeMon1997,LeMon2001,GonLeMon2000}, \cite[Chapter 8]{BaMa2009} and \cite[Chapter 10]{GrPe2011}.\\[-5pt]

In its turn, {\em recurrence} is one of the fundamental concepts in dynamics since the beginning of the theory at the end of the 19th century when Poincar\'e proved his Recurrence Theorem, even though most of the literature in the context of Linear Dynamics is built around the central concept of hypercyclicity. In fact, we refer to the 2014 paper of Costakis, Manoussos and Parissis \cite{CoMaPa2014} as the beginning of the systematic study of {\em linear recurrence}, despite the great non-linear dynamical knowledge already existing in this area (see \cite{Furstenberg1981}). The linear structure of the set of recurrent vectors has been recently studied, and many results about {\em lineability} and {\em dense lineability} have been obtained, see \cite{GriLoPe2022}.\\[-5pt]

The aim of this paper is rewriting the already well-known {\em hypercyclic spaceability theory} on Banach spaces for recurrence and, moreover, linking both concepts when we consider weakly-mixing operators.

The paper is organized as follows. In Section~\ref{Sec:2Notation} we introduce the notation, basic concepts and the historical development of the {\em hypercyclic spaceability theory} on Banach spaces. In Section~\ref{Sec:3RecSub} we present the obtained results about the spaceability of the recurrent vectors by establishing a symmetry with the hypercyclic case. Section~\ref{Sec:4Basic} is devoted to prove sufficient conditions for a quasi-rigid operator to have a {\em recurrent subspace}. In Section~\ref{Sec:5complex} we study the essential spectrum for recurrent operators and its consequences on the existence of recurrent subspaces when the underlying space is complex. Finally, Section~\ref{Sec:6real} exhibits the real case of the previous results while in Section~\ref{Sec:7applications} we investigate their applications and we show that every {\em C-type operator} as defined in \cite{Menet2017,GriMaMe2021} has a {\em hypercyclic subspace}.\\[-5pt]

We refer to the textbooks \cite{BaMa2009,GrPe2011} for any Linear Dynamics' unexplained but standard notion.

\section{Notation and basic concepts}\label{Sec:2Notation}

\subsection{General background}

Given an infinite-dimensional Banach space $X$ and a subset of vectors $Y \subset X$ with some property, we say that $Y$ is {\em spaceable} if there is an infinite-dimensional closed subspace $Z \subset X$ such that $Z\setminus\{0\} \subset Y$. In this paper we study the {\em spaceability} property for some subsets of vectors with certain dynamical properties (hypercyclicity and recurrence) with respect to a bounded linear operator.\\[-5pt]

From now on let $T:X\longrightarrow X$ be a {\em bounded linear operator} on a ({\em real} or {\em complex}) {\em separable infinite-dimensional Banach space} $X$, and denote by $\Lc(X)$ the {\em set of bounded linear operators} acting on such a space $X$. Given a vector $x \in X$ we denote its {\em $T$-orbit} by
\[
\orb(x,T) := \{ T^nx : n \in \NN_0 \},
\]
where $\NN_0 = \NN \cup \{0\}$. We are interested in the dynamical properties:
\begin{enumerate}[(a)]
	\item {\em hypercyclicity}: a vector $x \in X$ is {\em hypercyclic} for $T$ if its $T$-orbit is dense in $X$. We denote by $\HC(T)$ the set of such vectors, and we $T$ is called {\em hypercyclic} if $\HC(T)$ is non-empty;
	
	\item {\em recurrence}: a vector $x \in X$ is {\em recurrent} for $T$ if there exists an increasing sequence $(k_n)_{n\in\NN}$ such that $T^{k_n}x \to x$. We denote by $\Rec(T)$ the set of such vectors, and we say that $T$ is {\em recurrent} if $\Rec(T)$ is dense in $X$.
\end{enumerate}

It is worth highlighting that for an operator $T \in \Lc(X)$ to be hypercyclic it is enough to admit just one hypercyclic vector, while to be recurrent we demand the existence of a dense set of recurrent vectors. This is the natural definition for recurrence since every operator admits the zero-vector as a fixed point (hence recurrent) so that the existence of just one recurrent vector is not enough to talk about a ``global'' recurrent behaviour, as it has been discussed in the very recent recurrence works \cite{CoMaPa2014,BesMePePu2016,BoGrLoPe2022,CarMur2022_MS,CarMur2022_arXiv,GriLo2023,GriLoPe2022}. Following the hypercyclicity works \cite{LeMon2001,GonLeMon2000}, \cite[Chapter 8]{BaMa2009} and \cite[Chapter 10]{GrPe2011} we will use the notation:

\begin{definition}
	Let $X$ be a (real or complex) Banach space and let $T \in \Lc(X)$:
	\begin{enumerate}[(a)]
		\item a {\em hypercyclic subspace} for $T$ is an infinite-dimensional closed subspace $Z \subset X$ such that $Z \setminus \{0\} \subset \HC(T)$;
		
		\item a {\em recurrent subspace} for $T$ is an infinite-dimensional closed subspace $Z \subset X$ such that $Z \subset \Rec(T)$.
	\end{enumerate}
\end{definition}

In the next subsection we recall the existent {\em hypercyclic spaceability theory} for operator acting on Banach spaces, pointing out the similarities that we will find in the recurrent case.

\subsection{Hypercyclic subspaces}

The first known result, which established sufficient conditions for an operator to have a hypercyclic subspace, is due to A. Montes-Rodr\'iguez \cite{Montes1996}:

\begin{theorem}[\textbf{\cite{Montes1996}}]\label{The:Montes}
	Let $X$ be a (real or complex) separable Banach space and let $T \in \Lc(X)$. Assume that there is an increasing sequence of integers $(k_n)_{n\in\NN}$ such that:
	\begin{enumerate}[{\em(a)}]
		\item $T$ satisfies the Hypercyclicity Criterion with respect to $(k_n)_{n\in\NN}$;
		
		\item there is an infinite-dimensional closed subspace $E \subset X$ such that $T^{k_n}x \to 0$ for all $x \in E$.
	\end{enumerate}
	Then $T$ has a hypercyclic subspace.
\end{theorem}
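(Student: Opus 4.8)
The plan is to produce the hypercyclic subspace as the closed linear span of a carefully built normalised basic sequence, so that the whole argument is a single gliding-hump induction. First I would record what hypothesis (a) gives: the Hypercyclicity Criterion with respect to $(k_n)_{n\in\NN}$ provides dense subsets $X_0,Y_0\subset X$ and maps $S_n\colon Y_0\to X$ with $T^{k_n}x\to 0$ for $x\in X_0$, with $S_ny\to 0$ for $y\in Y_0$, and with $T^{k_n}S_ny\to y$ for $y\in Y_0$. I would fix a sequence $(y_\ell)_{\ell\in\NN}\subset Y_0$ that is dense in $X$ (every ball being hit by infinitely many $y_\ell$), and extract from the subspace $E$ of hypothesis (b) a normalised basic sequence $(f_j)_{j\in\NN}$, say with basis constant at most $2$.

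Next I would build vectors $e_j:=f_{p_j}+\delta_j$, small perturbations of a subsequence of $(f_j)$, where $\delta_j:=\sum_{\ell\ge j}s_{j,\ell}$ and each $s_{j,\ell}$ is a vector of $X_0$ with $\|s_{j,\ell}\|$ extremely small and $\|T^{k_{m_{j,\ell}}}s_{j,\ell}-y_\ell\|$ extremely small, for a suitably large integer $m_{j,\ell}$. Such $s_{j,\ell}$ is obtained by taking $S_{m_{j,\ell}}(y_\ell)$, which is small once $m_{j,\ell}$ is large, and nudging it into the dense set $X_0$ by an amount tiny enough (it may depend on the now-fixed operator $T^{k_{m_{j,\ell}}}$) not to spoil $\|T^{k_{m_{j,\ell}}}s_{j,\ell}-y_\ell\|$. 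Running through the pairs $(j,\ell)$ in a fixed dovetailed order and choosing the $p_j$ and the $m_{j,\ell}$ increasingly and large, I would arrange: (i) $\sum_j\|\delta_j\|$ so small that $(e_j)$ is a basic sequence equivalent to $(f_{p_j})$, so that $Z:=\cl{\lspan\{e_j:j\in\NN\}}$ is an infinite-dimensional closed subspace (small-perturbation lemma for basic sequences); (ii) at the step when $m_{j,\ell}$ is fixed, $T^{k_{m_{j,\ell}}}$ maps each of the finitely many previously chosen $s_{i,\ell'}$ into a ball of negligible radius --- possible because each such vector lies in $X_0$, so $T^{k_n}s_{i,\ell'}\to 0$; (iii) every $s_{i,\ell'}$ chosen afterwards is small enough that the now-fixed operator $T^{k_{m_{j,\ell}}}$ still maps it negligibly.

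Granting the construction, I would verify that every $z\in Z\setminus\{0\}$ is hypercyclic. Write $z=\sum_j a_j e_j$ with, as usual, $\sup_j|a_j|\le C\|z\|$; then $v:=\sum_j a_j f_{p_j}=z-\sum_j a_j\delta_j$ lies in $E$, so $T^{k_n}v\to 0$. Given $w\in X$ and $\eps>0$, pick $i$ with $a_i\ne 0$; since $\{y_\ell:\ell\ge i\}$ is still dense in $X$ and $m_{i,\ell}\to\infty$ as $\ell\to\infty$, there is $\ell\ge i$ with $\|a_iy_\ell-w\|$ small and $\|T^{k_{m_{i,\ell}}}v\|$ small. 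Applying $T^{k_{m_{i,\ell}}}$ to $z$ isolates the term $a_iT^{k_{m_{i,\ell}}}s_{i,\ell}\approx a_iy_\ell\approx w$; the remaining contributions are $T^{k_{m_{i,\ell}}}v$ together with $\sum_{(j,\ell')\ne(i,\ell)}a_j\,T^{k_{m_{i,\ell}}}s_{j,\ell'}$, and these are negligible by (ii)--(iii) and the bound on the $|a_j|$. Hence $\orb(z,T)$ meets every ball, i.e.\ $z\in\HC(T)$, and $Z$ is a hypercyclic subspace.

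I expect the main obstacle to be exactly clauses (ii)--(iii): controlling the cross-interference of the steering vectors under the iterates $T^{k_{m_{i,\ell}}}$, since the Criterion says nothing about $T^{k_n}S_m(y)$ for $n\ne m$. The device that defuses it is to force every steering vector to lie in $X_0$, so that its entire orbit along $(k_n)$ is null and hence eventually negligible under the activation maps chosen later, while keeping all perturbations summably small; together with the facts that scalar multiples of a dense set are dense --- which absorbs the unknown coefficient $a_i$ of a general $z\in Z$ --- and that $T^{k_n}$ decays on $E$ (handling the bulk $v$ of $z$), this closes the argument.
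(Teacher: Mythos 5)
Your proposal is correct, but note that the paper itself does not prove Theorem~\ref{The:Montes}: it is quoted from \cite{Montes1996} (in the refined form of \cite{LeMu2006}; see also \cite[Theorem~8.1]{BaMa2009} and \cite[Theorem~10.29]{GrPe2011}), and what you have written is essentially that classical proof --- Mazur's theorem to extract a normalized basic sequence from $E$, the small-perturbation lemma to keep the perturbed system $(e_j)$ basic and equivalent (hence $Z$ closed and infinite-dimensional, with uniformly bounded coefficient functionals), and a dovetailed choice of steering vectors $s_{j,\ell}$ so that a suitable iterate sends one of them close to a prescribed dense target while everything else is negligible. The closest argument inside the paper is the proof of Theorem~\ref{The:Banach-sufficient} in Section~\ref{Sec:4Basic}, which shares the skeleton (Mazur plus the perturbation lemma) but is strictly simpler: for recurrence the perturbed basis vectors are taken directly in the dense set $Y$ of quasi-rigid vectors and $T^{l_j}x\to x$ on the closed span follows from one telescoping estimate, with no steering toward a dense set, no scalar-absorption trick, and no cross-interference control. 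Your extra layer is exactly what hypercyclicity of every nonzero vector demands, and your two devices are the right ones: forcing each steering vector into $X_0$, so that its whole orbit along $(k_n)$ is null and the activation times chosen later crush the earlier vectors (the Criterion indeed says nothing about $T^{k_n}S_m y$ for $n\neq m$), and using that $\{a_i y_\ell\}_\ell$ is dense for $a_i\neq 0$, while the bulk $v=\sum_j a_j f_{p_j}\in E$ is handled by the pointwise decay of hypothesis (b). The one piece of bookkeeping you should make explicit is that the ``negligible'' bounds in your clauses (ii)--(iii) must be chosen summable across the construction (say $2^{-t}$ at step $t$), so that the full cross sum $\sum_{(j,\ell')\neq(i,\ell)}\|T^{k_{m_{i,\ell}}}s_{j,\ell'}\|$, and not merely each term, tends to $0$ as the step index of $(i,\ell)$ grows; with that standard precaution your argument closes.
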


One may note various things when looking at Theorem~\ref{The:Montes}:
\begin{enumerate}[--]
	\item the original statement used the whole sequence $(k_n)_{n\in\NN} = (n)_{n\in\NN}$, i.e.\ $T$ was required to satisfy the Kitai's Criterion instead of the actual more refined Hypercyclicity Criterion (see \cite{LeMu2006} or \cite[Theorem 8.1]{BaMa2009} for Theorem~\ref{The:Montes} stated here);
	
	\item it is now well-known (due to J. B\`es and A. Peris \cite{BesPe1999}) that an operator $T$ satisfies the Hypercyclicity Criterion if and only if it is {\em weakly-mixing}, i.e.\ if and only if the $N$-fold direct sum operator $T\oplus\cdots\oplus T$ is hypercyclic for every $N \in \NN$;
	
	\item however, the sequence with respect to which conditions (a) and (b) are satisfied must coincide, so a priori one cannot exchange the hypothesis that {\em $T$ satisfies the Hypercyclicity Criterion with respect to $(k_n)_{n\in\NN}$}, by the apparently equivalent {\em $T$ is weakly-mixing};
	
	\item there exists a second proof of Theorem~\ref{The:Montes} that relies on a really nice idea of K. C. Chan (see \cite{Chan1999,Chan2001}). We will not use that approach in this document, but it is worth mentioning that it also needs both (a) and (b) conditions with respect to the same sequence $(k_n)_{n\in\NN}$.
\end{enumerate}

The following result that appeared about the existence of hypercyclic subspaces for operators on Banach spaces is due to F. Le\'on-Saavedra and A. Montes-Rodr\'iguez \cite{LeMon1997}. They showed that Theorem~\ref{The:Montes} applies for weakly-mixing compact perturbations of the identity:

\begin{theorem}[\textbf{\cite{LeMon1997}}]\label{The:LeMon}
	Let $X$ be a (real or complex) separable Banach space and let $T \in \Lc(X)$. Assume that $T$ is weakly-mixing and that there exists some compact operator $K$ such that $\|T-K\| \leq 1 $. Then $T$ has a hypercyclic subspace.
\end{theorem}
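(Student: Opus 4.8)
The plan is to verify the hypotheses of Montes-Rodr\'iguez's Theorem~\ref{The:Montes} along a carefully chosen sequence. Since $T$ is weakly-mixing, the B\`es--Peris theorem supplies an increasing sequence $(m_j)_{j\in\NN}$ along which $T$ satisfies the Hypercyclicity Criterion; fix the associated dense set $X_0\subset X$, so that $T^{m_j}v\to 0$ for every $v\in X_0$. The key elementary remark is that the Hypercyclicity Criterion is inherited by \emph{every} subsequence of $(m_j)_{j\in\NN}$. It therefore suffices to produce an increasing subsequence $(k_n)_{n\in\NN}$ of $(m_j)_{j\in\NN}$ together with an infinite-dimensional closed subspace $E\subset X$ such that $T^{k_n}x\to 0$ for all $x\in E$: condition (a) of Theorem~\ref{The:Montes} then holds along $(k_n)_{n\in\NN}$ by the above inheritance, condition (b) holds by construction, and the theorem produces a hypercyclic subspace. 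This arrangement respects the subtlety stressed above, that (a) and (b) must be realized with respect to one and the same sequence.

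All the content is in constructing $\big((k_n)_{n\in\NN},E\big)$, and this is where the hypothesis $\|T-K\|\le 1$ (with $K$ compact) is used. Writing $R:=T-K$, so $\|R\|\le 1$, and using that the compact operators form an ideal, every power splits as $T^n=R^n+C_n$ with $C_n$ compact and $\|T^n-C_n\|=\|R^n\|\le 1$, i.e.\ $\|\pi(T^n)\|\le 1$ in the Calkin algebra for every $n$. Combining this with the elementary fact that a compact operator is uniformly small on a finite-codimensional subspace --- take a finite $1$-net of the relatively compact set $\cl{C_n^*(B_{X^*})}\subset X^*$ and intersect the kernels of its members --- one gets: \emph{for every $n\in\NN$ there is a finite-codimensional closed subspace $M\subset X$ with $\|T^n x\|\le 2\|x\|$ for all $x\in M$}. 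This control comes only one power at a time and only up to the constant $2$; the impossibility of improving $2$ to something small is precisely the borderline feature permitted by ``$\|T-K\|\le 1$'' instead of $<1$.

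With these ingredients I would build, by a single induction, vectors $e_n\in X_0$ (each chosen $\delta_n$-close to a unit vector, so semi-normalized), indices $k_n=m_{j_n}$ with $j_1<j_2<\cdots$, finite-codimensional closed subspaces $N_n$ and $M^{(n)}$ with $\|T^{k_n}|_{M^{(n)}}\|\le 2$, and reals $\delta_n>0$ with $\sum_n\delta_n$ as small as we wish, so that: (i) $N_n\subset M^{(1)}\cap\dots\cap M^{(n-1)}$ and $e_n$ is $\delta_n$-close to a unit vector of $N_n$, with $\delta_n$ small enough that Mazur's technique keeps $(e_n)$ a basic sequence of constant $\le 2$ and that $\delta_n\|T^{k_i}\|\le 2^{-n}$ for all $i<n$; and (ii) $\|T^{k_n}e_i\|\le 2^{-n}$ for all $i\le n$ --- possible by taking $j_n$ large, since $e_1,\dots,e_n\in X_0$ and $T^{m_j}v\to 0$ on $X_0$. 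Put $E:=\cl{\lspan}\{e_n:n\in\NN\}$ and write $x=\sum_n a_n e_n\in E$, so $|a_n|\le C\|x\|$. To bound $\|T^{k_m}x\|$, split the series at $n=m$. The ``head'' is small by (ii): $\big\|\sum_{n\le m}a_n T^{k_m}e_n\big\|\le C\|x\|\sum_{n\le m}\|T^{k_m}e_n\|\le C\|x\|\,m\,2^{-m}\to 0$. For the ``tail'' it suffices to check $\sup_m\|T^{k_m}|_{E_{>m}}\|<\infty$, with $E_{>m}:=\cl{\lspan}\{e_n:n>m\}$: for a unit $y=\sum_{n>m}b_n e_n\in E_{>m}$, write $e_n=u_n+(e_n-u_n)$ with $u_n\in N_n\subset M^{(m)}$; then $\sum_{n>m}b_n u_n\in M^{(m)}$ has norm $\le 1+C\sum_n\delta_n$, so its image under $T^{k_m}$ has norm $\le 2(1+C\sum_n\delta_n)$, while $\sum_{n>m}\|T^{k_m}(e_n-u_n)\|\le\sum_{n>m}\|T^{k_m}\|\,\delta_n\le\sum_{n>m}2^{-n}$ by (i). Since $x_{>m}\to 0$ (tail of a convergent series), $\|T^{k_m}x_{>m}\|\le\sup_m\|T^{k_m}|_{E_{>m}}\|\cdot\|x_{>m}\|\to 0$. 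Hence $T^{k_m}x\to 0$ for all $x\in E$, and Theorem~\ref{The:Montes} gives a hypercyclic subspace.

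The main obstacle is exactly the bookkeeping inside this induction. As $T$ need not be power-bounded one cannot make $\|T^{k_n}\|$ small, and as $\|\pi(T)\|$ may equal $1$ one cannot make $\|T^{k_n}|_Z\|$ small on any \emph{fixed} infinite-dimensional closed subspace $Z$; thus the genuine decay $T^{k_n}x\to 0$ on $E$ has to be harvested entirely from the weakly-mixing data --- the $e_n$ are forced to lie in $X_0$ so that far-out powers contract them --- while the essential-norm bound $\|\pi(T^{k_n})\|\le 1$ is used only defensively, to keep the finitely many previously chosen powers from blowing up on the newest basis vector and to keep $\sup_m\|T^{k_m}|_{E_{>m}}\|$ finite. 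Making the choices of $\delta_n$, of how large each $j_n$ must be, and of the subspaces $N_n, M^{(n)}$ mutually compatible is the real work. That ``$1$'' cannot be enlarged in the hypothesis is witnessed by Rolewicz's operators $\lambda B$ with $|\lambda|>1$: they are weakly-mixing, lie at distance $|\lambda|>1$ from the compact operators, and admit no hypercyclic subspace.
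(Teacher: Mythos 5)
Your proof is correct, but it takes a genuinely different route from the paper. The paper treats Theorem~\ref{The:LeMon} as a corollary of the spectral characterization: since $\|T-K\|\le 1$, the essential spectrum of $T$ (which equals that of $T-K$, compact perturbations being invisible in the Calkin algebra) sits inside $\sigma(T-K)\subset\cl{\DD}$, and being non-empty it meets $\cl{\DD}$; Theorem~\ref{The:GonLeMon} (resp.\ Theorem~\ref{The:Banach-real}, via complexification) then yields the hypercyclic subspace. You instead verify the hypotheses of Theorem~\ref{The:Montes} directly: you exploit $\|[T^n]_{\Lc/\Kc}\|\le\|(T-K)^n\|\le 1$ together with the finite-codimensional-subspace trick for compact operators (the same device as \cite[Lemma~8.13]{BaMa2009}, which the paper uses in the proof of (iv) $\Rightarrow$ (ii) of Theorem~\ref{The:Banach-complex}), and then run a Mazur-plus-perturbation basic sequence construction with vectors taken from the dense set of the Hypercyclicity Criterion — essentially the hypercyclic analogue of the paper's proof of Theorem~\ref{The:Banach-sufficient}, correctly exploiting that the Criterion passes to subsequences so that conditions (a) and (b) of Theorem~\ref{The:Montes} hold along one and the same sequence. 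What your approach buys: it is self-contained and elementary (no semi-Fredholm theory, no non-emptiness of the essential spectrum), and it handles the real and complex cases uniformly without complexification; what it costs is that you re-derive by hand what is already packaged in \cite[Theorem~20]{LeMu2006} or \cite[Theorem~10.29]{GrPe2011} (quoted in the paper right after Theorem~\ref{The:Banach-sufficient}), which you could have cited to replace your entire induction, since your two ingredients produce exactly a non-increasing sequence of finite-codimensional subspaces $E_n$ with $\sup_n\|T^{k_n}\res_{E_n}\|\le 2$. The only blemishes are cosmetic: the tail estimate should carry the bounded coefficient factor $|b_n|$, and the compatibility of the Mazur functionals with the subspaces $N_n$ is waved at rather than written out — both routine.
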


Note that, contrary to Theorem~\ref{The:Montes}, no assumption on the sequence with respect to which $T$ satisfies the Hypercyclicity Criterion is necessary for Theorem~\ref{The:LeMon} and, indeed, we have stated the result just including the {\em weak-mixing} assumption. The development of the theory followed by giving a complete characterization of the operators that admit a hypercyclic subspace, provided they are weakly-mixing. This was first established by F.~Le\'on-Saavedra and A.~Montes-Rodr\'iguez for Hilbert spaces \cite{LeMon2001}, and later by M.~Gonz\'alez and the previous two authors for the general Banach space case \cite{GonLeMon2000}.\\[-5pt]

To state the announced characterization we move into the {\em complex} setting, and in fact, we need the concept of {\em essential spectrum} for an operator acting on a complex Banach space (see~\cite{Conway1989}). We also introduce the concept of {\em left-essential spectrum}, since it turns out to be an important tool for the result, and we also use it along the paper (see Section~\ref{Sec:5complex}):

\begin{definition}
	Let $X$ be a complex Banach space and let $\Kc(X)$ be the two-sided ideal of $\Lc(X)$ consisting of all compact operators on $X$. Given $T \in \Lc(X)$ we denote by $[T]_{\Lc/\Kc}$ the image of the operator $T$ under the standard projection
	\[
	\Lc(X) \hookrightarrow \Lc(X)/\Kc(X),
	\]
	where the Banach algebra $\Lc(X)/\Kc(X)$ is the known {\em Calkin algebra}. Then:
	\begin{enumerate}[(a)]
		\item the {\em left-essential spectrum} of $T$ is the compact set of complex numbers
		\[
		\sigma_{\ell e}(T) = \left\{ \lambda \in \CC : [T-\lambda]_{\Lc/\Kc} \text{ is not left-invertible in } \Lc(X)/\Kc(X) \right\};
		\]
		
		\item the {\em essential spectrum} of $T$ is the compact set of complex numbers
		\[
		\sigma_e(T) = \left\{ \lambda \in \CC : [T-\lambda]_{\Lc/\Kc} \text{ is not invertible in } \Lc(X)/\Kc(X) \right\}.
		\]
	\end{enumerate}
\end{definition}

In Section~\ref{Sec:5complex} we examinate the previous concepts from the {\em Fredholm} and {\em semi-Fredholm theory} perspective. For the moment we are able to state the mentioned characterization: 

\begin{theorem}[\textbf{\cite{GonLeMon2000}}]\label{The:GonLeMon}
	Let $X$ be a complex separable Banach space and let $T \in \Lc(X)$. If $T$ is weakly-mixing, then the following statements are equivalent:
	\begin{enumerate}[{\em(i)}]
		\item $T$ has a hypercyclic subspace;
		
		\item there exists an infinite-dimensional closed subspace $E \subset X$ and an increasing sequence of integers $(l_n)_{n\in\NN}$ such that $T^{l_n}x \to 0$ for all $x \in E$;
		
		\item there exists an infinite-dimensional closed subspace $E \subset X$ and an increasing sequence of integers $(l_n)_{n\in\NN}$ such that $\sup_{n\in\NN} \|T^{l_n}\res_E\| < \infty$;
		
		\item the essential spectrum of $T$ intersects the closed unit disk, i.e.\ $\sigma_e(T) \cap \cl{\DD} \neq \varnothing$.
	\end{enumerate}
\end{theorem}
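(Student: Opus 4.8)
\emph{Strategy.} The plan is to prove the implications $\textup{(iv)}\Rightarrow\textup{(ii)}\Rightarrow\textup{(i)}$, the trivial $\textup{(ii)}\Rightarrow\textup{(iii)}$, and the two ``necessity'' implications $\textup{(i)}\Rightarrow\textup{(iv)}$ and $\textup{(iii)}\Rightarrow\textup{(iv)}$, which together yield the equivalence. Two facts about a weakly-mixing $T$ are used repeatedly: it is hypercyclic, and (sharpening the B\`es--Peris theorem) given \emph{any} increasing sequence of integers there is a subsequence along which $T$ satisfies the Hypercyclicity Criterion; this last point is precisely what removes the ``same sequence'' obstruction noted after Theorem~\ref{The:Montes}. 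Thus, for $\textup{(ii)}\Rightarrow\textup{(i)}$, starting from $E$ and $(l_n)_{n\in\NN}$ with $T^{l_n}x\to0$ on $E$, one thins $(l_n)_{n\in\NN}$ to a subsequence along which $T$ satisfies the Hypercyclicity Criterion (condition (b) of Theorem~\ref{The:Montes} survives the thinning) and applies Montes' theorem. And $\textup{(ii)}\Rightarrow\textup{(iii)}$ is clear, since norm convergence to $0$ bounds the norms $\|T^{l_n}\res_E\|$.

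\emph{The spectral condition is sufficient, $\textup{(iv)}\Rightarrow\textup{(ii)}$.} First I would upgrade the hypothesis from $\sigma_{e}(T)$ to $\sigma_{\ell e}(T)$: since $T$ is hypercyclic, $T^{*}$ has no eigenvalues, so $T-\lambda$ has dense range for every $\lambda\in\CC$; hence if $\sigma_{\ell e}(T)\cap\overline{\DD}=\varnothing$, then for each $|\lambda|\le1$ the operator $T-\lambda$ is left-invertible modulo the compacts -- equivalently upper semi-Fredholm -- and, having dense range, is surjective, hence Fredholm, giving $\sigma_{e}(T)\cap\overline{\DD}=\varnothing$; by contraposition $\sigma_{\ell e}(T)\cap\overline{\DD}\ne\varnothing$. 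Fix $\lambda\in\sigma_{\ell e}(T)$ with $|\lambda|\le1$. If $\Ker(T-\lambda)$ is infinite-dimensional, then on $E=\Ker(T-\lambda)$ one has $T^{n}x=\lambda^{n}x$; if instead $\Ran(T-\lambda)$ is not closed, then $T-\lambda$ is bounded below on no finite-codimensional subspace, so one builds inductively a normalized basic sequence with $\|(T-\lambda)x_k\|$ decaying arbitrarily fast (each $x_k$ chosen far from the previously produced vectors and their first few iterates), and on $E=\overline{\lspan}\{x_k:k\in\NN\}$ the operator $T$ is a tightly controlled perturbation of $\lambda\,I$. Using the semi-Fredholm analysis of Section~\ref{Sec:5complex} -- in particular its handling of the delicate case $|\lambda|=1$, where one must descend to a point of $\sigma_{\ell e}(T)$ strictly inside the disc -- one then extracts an increasing sequence $(l_n)_{n\in\NN}$ with $T^{l_n}\res_E\to0$, which is (ii).

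\emph{The spectral condition is necessary, $\textup{(i)}\Rightarrow\textup{(iv)}$ and $\textup{(iii)}\Rightarrow\textup{(iv)}$.} Both follow by contraposition from the same setup. Assume $\sigma_{e}(T)\cap\overline{\DD}=\varnothing$; as $\sigma_{\ell e}(T)\subseteq\sigma_{e}(T)$, the operator $T-\lambda$ is upper semi-Fredholm -- hence bounded below on some finite-codimensional subspace -- for every $|\lambda|\le1$, and a continuity-and-compactness argument over the compact disc $\overline{\DD}$ (the semi-Fredholm input of Section~\ref{Sec:5complex}) upgrades this to a single finite-codimensional closed subspace $X_0$ and an $\eps>0$ with $\|(T-\lambda)x\|\ge\eps\|x\|$ for all $x\in X_0$ and all $|\lambda|\le1$. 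If some infinite-dimensional closed $E$ satisfied $\sup_n\|T^{l_n}\res_E\|<\infty$ for an increasing $(l_n)_{n\in\NN}$, then $E\cap X_0$ would still be infinite-dimensional and closed while carrying both bounded iterates and the uniform lower bound above; pushing the lower bound through the powers of $T$ -- a compression argument -- forces the iterates of the unit vectors of $E\cap X_0$ to escape to infinity, a contradiction, so (iii) fails. The same mechanism kills (i): a non-zero vector of the intersection of a hypercyclic subspace with $X_0$ would, by the same compression argument, have an orbit escaping to infinity, contradicting density.

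\emph{Main obstacle.} The crux is the spectral equivalence itself -- translating between the geometric assertion ``there is an infinite-dimensional closed subspace on which the iterates stay bounded'' and the Fredholm-theoretic assertion ``$\sigma_{e}(T)$ (equivalently, for hypercyclic $T$, $\sigma_{\ell e}(T)$) meets $\overline{\DD}$''. For $\textup{(iii)}\Rightarrow\textup{(iv)}$ the difficulty is extracting the uniform lower bound on a finite-codimensional subspace and then iterating it into an escape estimate; for $\textup{(iv)}\Rightarrow\textup{(ii)}$ it is the boundary case $|\lambda|=1$ together with ensuring that one can work with a point of $\sigma_{\ell e}(T)$ in the open disc (or with an infinite-dimensional kernel). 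These are exactly what the semi-Fredholm theory developed in Section~\ref{Sec:5complex} is designed to supply.
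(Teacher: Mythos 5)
Your overall architecture (trivial implications plus divergent-orbit necessity plus a Fredholm-theoretic sufficiency step) is the right one — it is the scheme of \cite{GonLeMon2000} and \cite[Chapter~8]{BaMa2009}, which this paper only cites and then mirrors in its own proof of Theorem~\ref{The:Banach-complex} — but two of your steps rest on claims that are false. The central one is your treatment of (ii)\,$\Rightarrow$\,(i): you assert that a weakly-mixing operator satisfies the Hypercyclicity Criterion along a subsequence of \emph{any} prescribed increasing sequence $(l_n)_{n\in\NN}$. This is not a sharpening of the B\`es--Peris theorem; it is essentially a mixing-type property. If $T$ is weakly mixing but not mixing, there are non-empty open sets $U,V$ and an increasing sequence $(l_n)_{n\in\NN}$ with $T^{l_n}(U)\cap V=\varnothing$ for every $n$, whereas satisfying the Hypercyclicity Criterion with respect to a subsequence of $(l_n)_{n\in\NN}$ would force $T^{l_{n_j}}(U)\cap V\neq\varnothing$ for all large $j$. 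This is precisely the ``same sequence'' obstruction the paper flags after Theorem~\ref{The:Montes}: one cannot adapt the criterion sequence to a given sequence. The actual proof goes the other way around: one closes the cycle through (iii) and (iv), and in (iv)\,$\Rightarrow$\,(i)/(ii) one adapts the \emph{subspace estimate} to the criterion sequence. Concretely, for hypercyclic $T$ one has $\sigma_{\ell e}(T)=\sigma_e(T)$, so a point $\lambda\in\sigma_e(T)\cap\cl{\DD}$ gives (via \cite[Proposition~D.3.4]{BaMa2009}) an infinite-dimensional closed $E$ and a compact $R$ with $(T-R)\res_E=\lambda I\res_E$; writing $T^{k_n}=(T-R)^{k_n}+R_n$ along the sequence $(k_n)_{n\in\NN}$ with which $T$ satisfies the Hypercyclicity Criterion, one obtains $\sup_n\|T^{k_n}\res_{E_n}\|\leq 2$ on a non-increasing sequence of subspaces and then applies the refined Montes theorem (conditions (a) and (b) along the \emph{same} $(k_n)$, exactly as in Theorem~\ref{The:Banach-sufficient}).

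The second gap is in your (iv)\,$\Rightarrow$\,(ii). Your plan for the boundary case $|\lambda|=1$ — ``descend to a point of $\sigma_{\ell e}(T)$ strictly inside the disc'' — is unavailable in general: for weakly-mixing compact perturbations of operators of norm at most one (Theorem~\ref{The:LeMon}), $\sigma_e(T)$ may be contained in $\TT$, even reduced to $\{1\}$, so every admissible $\lambda$ lies on the circle; and if $E=\Ker(T-\lambda)$ with $|\lambda|=1$, then $\|T^nx\|=\|x\|$ on $E$, so no sequence yields decay on \emph{that} subspace. What the spectral hypothesis gives directly is only boundedness of iterates on a decreasing chain of subspaces; the convergence $T^{l_n}x\to 0$ on a single subspace is then manufactured by the perturbed-basic-sequence construction, replacing the $e_{l_n}$'s by vectors of the dense set supplied by the Hypercyclicity Criterion (whose orbits tend to $0$ along $(k_n)$) — the same scheme as the proof of Theorem~\ref{The:Banach-sufficient} with the target $x$ replaced by $0$. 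Finally, your ``compression argument'' for (i),(iii)\,$\Rightarrow$\,(iv) cannot be run naively: the uniform lower bound $\|(T-\lambda)x\|\geq\eps\|x\|$ on a finite-codimensional $X_0$ does not iterate, because the orbit of a vector need not stay in $X_0$. The correct statement is exactly the divergent-orbit lemma (Lemma~\ref{Lem:divergent}, i.e.\ \cite[Lemma~8.15]{BaMa2009}), whose proof needs the $\Kc$-null basic sequence machinery; citing it is fine, but it is doing genuine work that your sketch elides.
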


Theorem~\ref{The:GonLeMon} contains the {\em complex} cases of Theorems~\ref{The:Montes} and \ref{The:LeMon}, the second being true since the compact perturbation of any operator preserves its essential spectrum. As we have already mentioned the {\em weakly-mixing} assumption is a really important tool in the three stated results, but to conclude the existence of a hypercyclic subspace there is always a second assumption about the ``regularity'' of $T$ on some subspace of $X$.

\section{Recurrent subspaces}\label{Sec:3RecSub}

In order to rewrite the previous theory for recurrence, we need to introduce the analogous ingredients to those observed in Theorems~\ref{The:Montes}, \ref{The:LeMon} and \ref{The:GonLeMon}. First of all we need the operator $T \in \Lc(X)$ to have a recurrence property similar to the {\em weak-mixing} assumption:
\begin{enumerate}[--]
	\item {\em the $N$-fold direct sum operator $T\oplus\cdots\oplus T$ has to be recurrent for every $N \in \NN$}.
\end{enumerate}
It has been recently shown that, for separable Banach spaces, this property is equivalent to the {\em quasi-rigidity} notion (see \cite{GriLoPe2022}):

\begin{definition}[\textbf{\cite{GriLoPe2022}}]
	Let $X$ be a Banach space. We say that $T \in \Lc(X)$ is a {\em quasi-rigid} operator if there exists an increasing sequence $(k_n)_{n\in\NN}$ and a dense subset $Y \subset X$ such that $T^{k_n}x \to x$ for all $x \in Y$. In this case we say that $T$ is {\em quasi-rigid with respect to} $(k_n)_{n\in\NN}$.
\end{definition}

Secondly, we need an assumption about the ``regularity'' of the operator $T$. In Theorem~\ref{The:Montes} this assumption was condition (b). However, since we would like the identity operator $I:X\longrightarrow X$ to fulfill the main theorems of the {\em recurrent spaceability theory}, we will use a more realistic condition than that of having, for a recurrent (and not necessarily hypercyclic) operator, an infinite-dimensional closed subspace of vectors with $0$-convergent suborbits:

\begin{theorem}\label{The:Banach-sufficient}
	Let $X$ be a (real or complex) separable Banach space and let $T \in \Lc(X)$. Assume that there is an increasing sequence of integers $(k_n)_{n\in\NN}$ such that:
	\begin{enumerate}[{\em(a)}]
		\item $T$ is quasi-rigid with respect to $(k_n)_{n\in\NN}$;
		
		\item there exists a non-increasing sequence $(E_n)_{n\in\NN}$ of infinite-dimensional closed subspaces of $X$ such that $\sup_{n\in\NN} \left\|T^{k_n}\res_{E_n}\right\| < \infty$.
	\end{enumerate}
	Then $T$ has a recurrent subspace. In particular, there exists an infinite-dimensional closed subspace $F \subset X$ and a subsequence $(l_n)_{n\in\NN}$ of $(k_n)_{n\in\NN}$ such that $T^{l_n}x \to x$ for all $x \in F$, so
	\[
	F\oplus\cdots\oplus F \subset \Rec(T\oplus\cdots\oplus T)
	\]
	for every $N$-fold direct sum operator $T\oplus\cdots\oplus T : X\oplus\cdots\oplus X \longrightarrow X\oplus\cdots\oplus X$.
\end{theorem}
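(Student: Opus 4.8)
\textit{The plan.} I would build, by an interlaced induction, a basic sequence $(f_j)_{j\in\NN}$ in $X$ together with a strictly increasing sequence of indices $(n_j)_{j\in\NN}$, so that $F:=\cl{\lspan}\{f_j:j\in\NN\}$ is the desired recurrent subspace and a suitably \emph{thinned} subsequence $(l_n)$ of $(k_n)$ works uniformly on $F$. Into each $f_j$ I must encode two competing features: $f_j$ should be \emph{recurrent along $(k_n)$}, and $f_j$ should lie \emph{almost inside} $E_{n_j}$. For the first, observe that hypothesis (a) makes $R:=\{x\in X:T^{k_n}x\to x\}$ a \emph{dense} linear subspace of $X$ (it contains the dense set witnessing quasi-rigidity), so recurrent vectors along $(k_n)$ are abundant; for the second I will use that each $E_n$ is infinite-dimensional. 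Put $M:=\sup_n\|T^{k_n}\res_{E_n}\|<\infty$.

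\textit{The induction.} Assume $f_1,\dots,f_{j-1}\in R$, indices $n_1<\dots<n_{j-1}$, and ``settling indices'' $N_1<\dots<N_{j-1}$ have been produced, where $N_i$ is least with $\|T^{k_m}f_i-f_i\|\le 2^{-i}$ for all $m\ge N_i$ (such $N_i$ exists because $f_i\in R$). Pick $n_j>\max\{n_{j-1},N_{j-1}\}$; using that $E_{n_j}$ is infinite-dimensional, choose a unit vector $g_j\in E_{n_j}$ that, by the usual construction of basic sequences, extends $(f_1,\dots,f_{j-1})$ to a basic sequence; then pick $f_j\in R$ with $\|f_j-g_j\|\le\rho_j$, where $\rho_j>0$ is small enough (i) to keep $(f_1,\dots,f_j)$ basic with constant $\le 2$ and (ii) to satisfy $\rho_j\le 2^{-j-n_j}/A_j$ with $A_j:=\max_{m\le n_j}\|T^{k_m}\|$; finally record $N_j$ (enlarged past $N_{j-1}$ if need be). Condition (ii) forces $\sum_j\rho_j<\infty$ and — crucially — ensures that however fast $\|T^{k_m}\|$ grows, $T^{k_n}$ cannot magnify the ``defect tails'' $\sum_{j>J}\rho_j$ beyond the level required below.

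\textit{Verification, and the main obstacle.} For $x=\sum_j a_j f_j\in F$ and a term $k_n$, I would split $x=\sum_{j\le J(n)}a_jf_j+\sum_{j>J(n)}a_jf_j$ with a cutoff $J(n)$ chosen so that $j\le J(n)$ forces $N_j\le n$ (then $T^{k_n}$ barely moves the head, a sum of already-settled recurrent vectors) while $n_{J(n)+1}\ge n$ (then, up to the minuscule $\rho_j$-perturbations, the tail lies in $E_{n_{J(n)+1}}\subseteq E_n$, where $T^{k_n}$ has norm $\le M$, the $\rho_j$-part being absorbed by the budget). Such a $J(n)$ exists exactly for the indices $n$ lying outside the ``bad windows'' $(n_j,N_j)$; since we forced $n_{j+1}>N_j$, these windows are pairwise disjoint, so the good indices are infinite, and taking $(l_n)$ to be $(k_n)$ restricted to them yields $T^{l_n}x\to x$ for every $x\in F$. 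As $F$ is infinite-dimensional this gives the recurrent subspace, and the displayed $N$-fold statement is then immediate because $(T\oplus\cdots\oplus T)^{l_n}$ converges coordinatewise on $F\oplus\cdots\oplus F$. I expect the delicate point to be precisely this reconciliation: a generic vector of $E_{n_j}$ is not recurrent, and since $T$ is only quasi-rigid — not rigid along $(k_n)$ — recurrence cannot be restored by a small perturbation toward $R$ while insisting on staying inside $E_{n_j}$; the resolution is to let $f_j$ escape $E_{n_j}$ by a tiny, carefully budgeted amount (importing recurrence from the dense subspace $R$) and to pay for the consequent unpredictability of the $N_j$ by thinning the return indices rather than by shrinking the subspace.
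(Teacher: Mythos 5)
Your proposal is correct and follows essentially the same route as the paper: a Mazur-type basic sequence chosen inside the nested subspaces $E_{n}$, perturbed with a summable budget into the dense set of vectors $x$ satisfying $T^{k_n}x\to x$, and a head/tail splitting in which hypothesis (b) (via $\sup_n\|T^{k_n}\arrowvert_{E_n}\|<\infty$ and the nesting) controls the tail while recurrence of the finitely many perturbed vectors controls the head. The only difference is bookkeeping: the paper chooses the subsequence $(l_n)$ concurrently in the induction, forcing $\|T^{l_j}f_{l_n}-f_{l_n}\|<2^{-(j+n)}$ for $n\le j$ so that convergence holds along the whole constructed subsequence, whereas you fix the vectors in $R$ first and then thin a posteriori through settling indices and good windows (plus a dominated-convergence argument for the head) --- both versions work.
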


Theorem~\ref{The:Banach-sufficient} can be compared with Theorem~\ref{The:Montes}. Note that condition (b) of Theorem~\ref{The:Banach-sufficient} is satisfied by the identity operator and that, for recurrent (and not necessarily hypercyclic) operators, it is a more suitable assumption than condition (b) of Theorem~\ref{The:Montes}. The proof of Theorem~\ref{The:Banach-sufficient} (see Section~\ref{Sec:4Basic}) is highly based on the arguments and {\em basic sequence} techniques used in the proof of Theorem~\ref{The:Montes}. In particular, our proof can be compared with the known results \cite[Theorem~20]{LeMu2006} or \cite[Theorem~10.29]{GrPe2011}, where condition (b) of Theorem~\ref{The:Banach-sufficient} was already used to obtain the existence of hypercyclic subspaces.\\[-5pt]

An application of the Banach-Steinhaus theorem together with Theorem~\ref{The:Banach-sufficient} yields that, {\em given a (real or complex) Banach space $X$ and given $T \in \Lc(X)$ a quasi-rigid operator with respect to the sequence $(k_n)_{n\in\NN}$, the following conditions are equivalent:
\begin{enumerate}[--]
	\item there exists an infinite-dimensional closed subspace $E \subset X$ and a subsequence $(l_n)_{n\in\NN}$ of $(k_n)_{n\in\NN}$ such that $T^{l_n}x \to x$ for all $x \in E$;
	
	\item there exists an infinite-dimensional closed subspace $E \subset X$ and a subsequence $(l_n)_{n\in\NN}$ of $(k_n)_{n\in\NN}$ such that $(T^{l_n}x)_{n\in\NN}$ converges in $X$ for all $x \in E$;
	
	\item there exists an infinite-dimensional closed subspace $E \subset X$ and a subsequence $(l_n)_{n\in\NN}$ of $(k_n)_{n\in\NN}$ such that $(T^{l_n}x)_{n\in\NN}$ is bounded in $X$ for all $x \in E$;
	
	\item there exists an infinite-dimensional closed subspace $E \subset X$ and a subsequence $(l_n)_{n\in\NN}$ of $(k_n)_{n\in\NN}$ such that $\sup_{n\in\NN} \|T^{l_n}\res_E\| < \infty$;
	
	\item there exists a non-increasing sequence $(E_n)_{n\in\NN}$ of infinite-dimensional closed subspaces of $X$ and a subsequence $(l_n)_{n\in\NN}$ of $(k_n)_{n\in\NN}$ such that $\sup_{n\in\NN} \left\|T^{l_n}\res_{E_n}\right\| < \infty$;
\end{enumerate}
and if any of them holds, then $T$ has a recurrent subspace}.\\[-5pt]

A stronger result is in fact true: by following the ideas of \cite{LeMon2001,GonLeMon2000} and \cite[Chapter 8]{BaMa2009} we can establish the equivalence between the previous statements and the (apparently weaker) condition of admitting a recurrent subspace. Indeed, our next result can be compared with the previously stated Theorem~\ref{The:GonLeMon}:

\begin{theorem}\label{The:Banach-complex}
	Let $X$ be a complex separable Banach space and let $T \in \Lc(X)$. If $T$ is quasi-rigid, then the following statements are equivalent: 
	\begin{enumerate}[{\em(i)}]
		\item $T$ has a recurrent subspace;
		
		\item there exists an infinite-dimensional closed subspace $E \subset X$ and an increasing sequence of integers $(l_n)_{n\in\NN}$ such that $T^{l_n}x \to x$ for all $x \in E$;
		
		\item there exists an infinite-dimensional closed subspace $E \subset X$ and an increasing sequence of integers $(l_n)_{n\in\NN}$ such that $\sup_{n\in\NN} \|T^{l_n}\res_E\| < \infty$;
		
		\item the essential spectrum of $T$ intersects the closed unit disk, i.e.\ $\sigma_e(T) \cap \cl{\DD} \neq \varnothing$.
	\end{enumerate}
\end{theorem}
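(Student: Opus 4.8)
The plan is to follow the architecture of the proof of Theorem~\ref{The:GonLeMon}, using Theorem~\ref{The:Banach-sufficient} in the place that Montes' Theorem~\ref{The:Montes} occupies there. Concretely I would establish the cycle $(iv)\Rightarrow(ii)\Rightarrow(i)\Rightarrow(iv)$ together with $(ii)\Rightarrow(iii)\Rightarrow(iv)$, which closes the equivalence. Two links are immediate: $(ii)\Rightarrow(i)$ because every $x\in E$ with $T^{l_n}x\to x$ lies in $\Rec(T)$, so such an $E$ is a recurrent subspace; and $(ii)\Rightarrow(iii)$ is Banach--Steinhaus applied to the pointwise convergent (hence pointwise bounded) family $(T^{l_n}\res_E)_{n\in\NN}$ on the Banach space $E$.

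For the constructive implication $(iv)\Rightarrow(ii)$ I would first pass from the essential to the \emph{left}-essential spectrum. If $T$ is quasi-rigid and $\sigma_{\ell e}(T)\cap\cl{\DD}=\varnothing$, then $T-\lambda$ is upper semi-Fredholm for all $|\lambda|\le 1$; but quasi-rigidity forces $\cl{\Ran T}=X$, and since $\Ran T$ is closed this means $T$ is Fredholm of index $\geq 0$, hence (by connectedness of $\cl{\DD}$) so is $T-\lambda$ for every $|\lambda|\le 1$, i.e.\ $\sigma_e(T)\cap\cl{\DD}=\varnothing$. Contrapositively, $(iv)$ provides $\lambda_0\in\sigma_{\ell e}(T)$ with $|\lambda_0|\le 1$. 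Let $(k_n)$ be a quasi-rigidity sequence and fix $\eps_j>0$ with $\eps_j\le(1+\|T-\lambda_0\|)^{-k_j}2^{-j}$. Using that $\lambda_0\in\sigma_{\ell e}(T)$ — so $T-\lambda_0$ is not bounded below on any finite-codimensional subspace — I would build inductively a seminormalized basic sequence $(x_j)$ with $\|(T-\lambda_0)x_j\|<\eps_j$, each $x_j$ chosen inside a suitable finite-codimensional subspace so as to keep the basis constant bounded. Putting $E_n:=\cl{\lspan}\{x_j:j\ge n\}$, expanding $T^{k_n}$ binomially around $\lambda_0 I$ and using $|\lambda_0|\le1$ and the smallness of the $\eps_j$ yields $\sup_{n}\|T^{k_n}\res_{E_n}\|<\infty$. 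Thus hypotheses (a) and (b) of Theorem~\ref{The:Banach-sufficient} hold, and its conclusion produces an infinite-dimensional closed $F$ and a subsequence $(l_n)$ of $(k_n)$ with $T^{l_n}x\to x$ for all $x\in F$ — which is exactly statement $(ii)$.

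The remaining links $(i)\Rightarrow(iv)$ and $(iii)\Rightarrow(iv)$ I would deduce from a single semi-Fredholm statement: \emph{if $T$ is quasi-rigid and $\sigma_e(T)\cap\cl{\DD}=\varnothing$, then every infinite-dimensional closed subspace $Z\subset X$ contains a vector $z$ with $\|T^m z\|\to\infty$}. This contradicts $(i)$, since a recurrent $z\in Z$ satisfies $\liminf_m\|T^m z\|\le\|z\|<\infty$, and it contradicts $(iii)$, since such a $z$ forces $\sup_n\|T^{l_n}\res_Z\|=\infty$ for every increasing $(l_n)$. By the reduction above, the index $d$ of $T-\lambda$ is a constant $\ge 0$ on $\cl{\DD}$. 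If $d=0$, then $T$ is invertible, $\sigma(T)\cap\cl{\DD}$ consists of finitely many eigenvalues of finite multiplicity, and on the cofinite $T$-invariant complement of the associated finite-rank Riesz projection the inverse has spectral radius $<1$, so $\|T^m z\|\to\infty$ for every nonzero $z$ there (and $Z$ meets that complement in an infinite-dimensional subspace). If $d\ge1$, one uses that $T-\lambda$ has dense range whenever $|\lambda|\ne1$ — annihilate the functionals $\varphi$ with $\varphi\circ T=\lambda\varphi$ against the relation $T^{k_n}y\to y$ — and closed range, hence $T-\lambda$ is onto; via the spectral mapping theorem for the surjectivity spectrum, $T^m$ is then onto with a right inverse of geometrically small norm, so $\|T^m y\|\ge c^m\|y\|$ for some $c>1$ on a finite-codimensional subspace; feeding a block sequence of $Z$ adapted to the increasing finite-dimensional filtration $(\Ker T^m)$ into a carefully weighted series $z=\sum_k a_k z^{(k)}\in Z$ then produces the escaping orbit. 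I expect the $d\ge1$ case — and inside it the control of the finitely many ``jump points'' on $\cl{\DD}\cap\{|\lambda|=1\}$ where $\dim\Ker(T-\lambda)$ exceeds $d$ (these can genuinely occur, e.g.\ for $T=2B\oplus\mu$ with $B$ the backward shift and a suitable unimodular $\mu$), so that the surjectivity estimate survives after deleting a finite-dimensional part — to be the real obstacle; everything else is routine bookkeeping of the constants in the basic-sequence and binomial estimates.
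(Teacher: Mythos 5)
Most of your scheme is sound and runs parallel to the paper. The links (ii)$\Rightarrow$(i) and (ii)$\Rightarrow$(iii) are as in the paper; your reduction from the essential to the left-essential spectrum (constancy of the Fredholm index on the connected set $\cl{\DD}$ plus the fact that quasi-rigidity forces $\cl{\Ran T}=X$, so $T$ is onto and of index $\geq 0$ once $0\notin\sigma_{\ell e}(T)$) is a correct streamlined substitute for the paper's Lemma~\ref{Lem:left.essential}--Corollary~\ref{Cor:left.essential}, which derive the full equality $\sigma_{\ell e}(T)=\sigma_e(T)$ from $\sigma_p(T^*)\subset\TT$; and your (iv)$\Rightarrow$(ii) via an almost-eigenvector basic sequence with $\|(T-\lambda_0)x_j\|<\eps_j$, $\eps_j\leq(1+\|T-\lambda_0\|)^{-k_j}2^{-j}$, $E_n=\cl{\lspan\{x_j:j\geq n\}}$, followed by the binomial estimate and Theorem~\ref{The:Banach-sufficient}, does close (the coefficient functionals of the basic sequence give $\sup_n\|T^{k_n}\res_{E_n}\|<\infty$), and is a legitimate alternative to the paper's route through \cite[Proposition~D.3.4]{BaMa2009}, the splitting $T^{k_n}=(T-R)^{k_n}+R_n$ and \cite[Lemma~8.13]{BaMa2009}.

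The genuine gap is in (i)$\Rightarrow$(iv) and (iii)$\Rightarrow$(iv). The paper does not reprove the divergent-orbit statement: it quotes it (Lemma~\ref{Lem:divergent}, from \cite{GonLeMon2000}; see also Proposition~\ref{Pro:divergent} and \cite[Lemmas 8.15(b), 8.17 and Corollary 8.14]{BaMa2009}), whose hypothesis $\sigma_{\ell e}(T)\cap\cl{\DD}=\varnothing$ is already implied by $\sigma_e(T)\cap\cl{\DD}=\varnothing$, so it applies directly in the contradiction argument. Your attempted replacement in the case $d\geq 1$ does not work as described. First, the chain ``$T-\lambda$ onto on $\cl{\DD}$, hence by spectral mapping for the surjectivity spectrum $\|T^m y\|\geq c^m\|y\|$ with $c>1$ on a finite-codimensional subspace'' breaks exactly at the unimodular points where $\codim\Ran(T-\lambda_*)>0$: these points lie in the surjectivity spectrum, so the surjection modulus of $T^m$ admits at best bounds of the form $(1-\delta)^m$, and no deletion of a finite-dimensional part of the \emph{domain} can restore surjectivity of $T-\lambda_*$, since its range has positive codimension however the domain is shrunk; you name this obstacle but do not resolve it. Secondly, even when every $|\lambda|\leq 1$ is a surjectivity point, surjectivity of the Fredholm operator $T^m$ yields only $\|T^m y\|\geq c^m\, d(y,\Ker T^m)$, and producing one $z\in Z$ with $d(z,\Ker T^m)$ decaying more slowly than $c^{-m}$ \emph{for all} $m$ simultaneously is not routine: the naive bound $d(\sum_k a_k z^{(k)},\Ker T^m)\geq |a_{k(m)}|\,d(z^{(k(m))},\Ker T^m)-\sum_{j\neq k(m)}|a_j|$ cannot be kept positive for every $m$ at once, and controlling such series is precisely the gliding-hump content of \cite[Lemmas 8.15--8.17 and Corollary 8.14]{BaMa2009} (note also that to contradict (i) and (iii) you need $\liminf_m\|T^m z\|=\infty$, not divergence along a subsequence). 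The simplest repair is to drop the case analysis on $d$ and invoke Lemma~\ref{Lem:divergent} (or Proposition~\ref{Pro:divergent}) as the paper does; with that substitution your argument coincides with the paper's proof.
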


Theorem~\ref{The:Banach-complex} reaffirms the fact that {\em quasi-rigidity} is, for recurrence, the analogous property to that of {\em weak-mixing} for hypercyclicity. The proof of Theorem~\ref{The:Banach-complex} (see Section~\ref{Sec:5complex}) is highly based on the proof of Theorem~\ref{The:GonLeMon}, but we shall study the structure of the essential spectrum of recurrent operators in order to complete the different implications required.\\

In view of Theorems~\ref{The:GonLeMon} and \ref{The:Banach-complex} we deduce that: {\em a weakly-mixing operator on a complex separable Banach space has a hypercyclic subspace if and only if it has a recurrent subspace}. However, we can prove that equivalence also for {\em real-linear} operators acting on {\em real} Banach spaces by using some {\em complexification} techniques (see \cite{MuSaTo1999,MoMuPeSe2022}): given a real Banach space $(X,\|\cdot\|)$, the {\em complexification} $\til{X}$ of $X$ is defined formally as the vector space
\[
\til{X} := \left\{ x + iy : x,y \in X \right\},
\]
which can be (algebraically and topologically) identified with $X\oplus X$. Indeed, if the multiplication by complex scalars is defined as
\[
(a+ib)(x+iy) = (ax-by)+i(ay+bx),
\]
for any $a,b \in \RR$ and $x,y \in X$, then $\til{X}$ becomes a complex Banach space with the norm
\[
\|x+iy\|_c := \sup_{t \in [0,2\pi]} \|\cos(t) x  - \sin(t) y\|.
\]
It is easy to check that $\|\cdot\|_c$ is a $\CC$-homogeneous norm that endows $\til{X} = X + iX$ with an homeomorphic topology to that of the usual direct sum space $X\oplus X$, and moreover, the map
\[
J : \til{X} \longrightarrow X\oplus X \quad \text{ with } \quad J(x+iy) = (x,y) \in X\oplus X,
\]
is an $\RR$-isomorphism.\\[-5pt]

Furthermore, given a (real-linear) operator $T:X\longrightarrow X$ on the real Banach space $X$, its {\em complexification} $\til{T}:\til{X}\longrightarrow\til{X}$ is defined by
\[
\til{T}(x+iy) = Tx + iTy \quad \text{ for every } x,y \in X,
\]
which is a (complex-linear) operator on the (complex) Banach space $\til{X}$. It is easily seen that $\til{T}$ is conjugate to $T\oplus T$ via $J$, i.e.\ $J \circ \til{T} = T\oplus T \circ J$, and also that $\|T\|=\|\til{T}\|$.\\

By using this complex structure we can state the real version of Theorems~\ref{The:GonLeMon} and \ref{The:Banach-complex}:

\begin{theorem}\label{The:Banach-real}
	Let $X$ be a real separable Banach space and let $T \in \Lc(X)$:
	\begin{enumerate}[{\em(a)}]
		\item If $T$ is weakly-mixing, then the following statements are equivalent:
		\begin{enumerate}[{\em(i)}]
			\item $T$ has a hypercyclic subspace;
			
			\item there exists an infinite-dimensional closed subspace $E \subset X$ and an increasing \newline sequence of integers $(l_n)_{n\in\NN}$ such that $T^{l_n}x \to 0$ for all $x \in E$;
			
			\item there exists an infinite-dimensional closed subspace $E \subset X$ and an increasing \newline sequence of integers $(l_n)_{n\in\NN}$ such that $\sup_{n\in\NN} \|T^{l_n}\res_E\| < \infty$;
			
			\item the essential spectrum of the complexification $\til{T}:\til{X}\longrightarrow\til{X}$ intersects the closed \newline unit disk, i.e.\ $\sigma_e(\til{T}) \cap \cl{\DD} \neq \varnothing$.
		\end{enumerate}
	
		\item If $T$ is quasi-rigid, then the following statements are equivalent:
		\begin{enumerate}[{\em(i)}]
			\item $T$ has a recurrent subspace;
			
			\item there exists an infinite-dimensional closed subspace $E \subset X$ and an increasing \newline sequence of integers $(l_n)_{n\in\NN}$ such that $T^{l_n}x \to x$ for all $x \in E$;
			
			\item there exists an infinite-dimensional closed subspace $E \subset X$ and an increasing \newline sequence of integers $(l_n)_{n\in\NN}$ such that $\sup_{n\in\NN} \|T^{l_n}\res_E\| < \infty$;
			
			\item the essential spectrum of the complexification $\til{T}:\til{X}\longrightarrow\til{X}$ intersects the closed \newline unit disk, i.e.\ $\sigma_e(\til{T}) \cap \cl{\DD} \neq \varnothing$.
		\end{enumerate}
	\end{enumerate}
\end{theorem}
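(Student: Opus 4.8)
The plan is to reduce Theorem~\ref{The:Banach-real} to the complex Theorems~\ref{The:GonLeMon} and~\ref{The:Banach-complex} through the complexification $\til T$, using throughout that $J$ conjugates $\til T$ to $T\oplus T$ and that $\|\cdot\|_c$ is equivalent to the product norm on $X\oplus X$. First I would transfer the dynamical hypothesis: if $T$ is weakly-mixing, then $T\oplus T$ is weakly-mixing (its $N$-fold direct sum is $T^{\oplus 2N}$, hypercyclic for all $N$), hence so is $\til T$; likewise, if $T$ is quasi-rigid with respect to $(k_n)_{n\in\NN}$ through a dense set $Y\subset X$, then $T\oplus T$, and hence $\til T$, is quasi-rigid with respect to $(k_n)_{n\in\NN}$ through $Y\oplus Y$. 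So the complex theorems apply to $\til T$; and since condition~(iv) of Theorem~\ref{The:Banach-real} is exactly condition~(iv) of Theorems~\ref{The:GonLeMon}/\ref{The:Banach-complex} written for $\til T$, those results already supply the equivalences between (iv), the statements (ii)/(iii) read for $\til T$, and the existence of a hypercyclic (resp.\ recurrent) subspace for $\til T$.

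It then remains to move conditions (ii) and (iii) between $X$ and $\til X\cong X\oplus X$. The direction $X\to\til X$ is immediate: if $E\subset X$ is infinite-dimensional and closed and $(l_n)_{n\in\NN}$ satisfies $T^{l_n}x\to0$ (resp.\ $T^{l_n}x\to x$, resp.\ $\sup_n\|T^{l_n}\res_E\|<\infty$), then $\til E:=E+iE$ is an infinite-dimensional closed complex subspace of $\til X$ with the corresponding property for $\til T$, since $\til T^{l_n}(x+iy)=T^{l_n}x+iT^{l_n}y$. The direction $\til X\to X$ is the technical core, and here I would exploit that every complex subspace $\til E\subset\til X$ is invariant under multiplication by each $e^{i\theta}$, i.e.\ $J(\til E)$ is invariant under all planar rotations of the two coordinates of $X\oplus X$. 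Given such an $\til E$ witnessing the $\til T$-version of (ii) or (iii), I would distinguish two cases. If $\til E\cap X$ or $\til E\cap iX$ is infinite-dimensional, it equals $X_0$ or $iX_0$ for some infinite-dimensional closed $X_0\subset X$, on which $\til T^{l_n}$ acts as $T^{l_n}$ up to an isometry, so $X_0$ is the desired subspace; the same applies when $\til E$ is a recurrent subspace for $\til T$. Otherwise $\til E\cap X=\til E\cap iX=\{0\}$ (which is automatically so when $\til E$ is a hypercyclic subspace, since the $\til T$-orbit of a vector of $X$ or of $iX$ stays in $X$ or in $iX$ and cannot be dense in $\til X$): then the real-linear projection $x+iy\mapsto x$ is injective on $\til E$, so $\til E$ is the graph $\{x+iLx:x\in D\}$ of a closed operator $L$ which, by rotation invariance, satisfies $L^2=-I$ on $D$. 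If $L$ is bounded, $D$ is automatically closed and $E:=D$ works; if $L$ is unbounded, the relation $L^2=-I$ yields normalized $w_k\in D$ with $\|Lw_k\|\to0$, and after thinning $(w_k)$ to a basic sequence fast enough one obtains $E:=\cl{\lspan\{w_k\}}\subset D$ with $\|Lx\|\le\|x\|$ for $x\in E$, hence the wanted uniform bound; when the starting datum was convergence (condition~(ii)) or a recurrent/hypercyclic subspace, the pointwise behaviour of $T^{l_n}$ on the dense span $\lspan\{w_k\}$, together with the uniform bound and the inclusion $E\subset D$, passes it to all of $E$. This yields $(ii)_T\Leftrightarrow(ii)_{\til T}$, $(iii)_T\Leftrightarrow(iii)_{\til T}$, and ``$\til T$ has a hypercyclic (resp.\ recurrent) subspace'' $\Rightarrow$ ``$T$ has one''.

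It then remains to insert condition~(i) into the cycle. For part~(b) one has $(ii)_T\Rightarrow(i)_T$ for free, and $(iv)\Rightarrow(i)_T$ from $(iv)\Leftrightarrow$``$\til T$ has a recurrent subspace'' (Theorem~\ref{The:Banach-complex} for $\til T$) together with the backward transfer; for part~(a), $(iv)\Rightarrow(i)_T$ follows likewise from Theorem~\ref{The:GonLeMon} for $\til T$ and the backward transfer, and $(ii)_T\Rightarrow(i)_T$ runs through $(ii)_T\Rightarrow(iii)_T\Rightarrow(iv)\Rightarrow(i)_T$. Finally $(i)_T\Rightarrow(iv)$ is argued by contraposition: assuming $\sigma_e(\til T)\cap\cl\DD=\varnothing$, the semi-Fredholm / essential-spectrum analysis of Section~\ref{Sec:5complex} — the ``expansion modulo compacts on the closed disc'' mechanism behind $(iv)\Rightarrow(i)$ of the complex theorems, which is field-agnostic and applies to the real operator $T$ once its essential spectrum is read as $\sigma_e(\til T)$ — forbids $X$ (indeed $\til X$, real or complex subspaces alike) from containing any infinite-dimensional closed subspace contained in $\Rec(T)$ (resp.\ with all non-zero vectors in $\HC(T)$), so $T$ admits no recurrent (resp.\ hypercyclic) subspace.

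The step I expect to be the main obstacle is precisely the backward transfer $\til X\to X$: producing a well-behaved infinite-dimensional closed subspace of $X$ out of one of $X\oplus X$ is genuinely delicate, with the rotation invariance of complex subspaces as the key that unlocks it and the unbounded-graph case (where $L^2=-I$) as the place where the gliding-hump / basic-sequence argument is indispensable.
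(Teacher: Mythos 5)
Your overall strategy (reduce everything to Theorems~\ref{The:GonLeMon} and~\ref{The:Banach-complex} applied to $\til{T}$, plus transfer lemmas between $X$ and $\til{X}$) is the paper's strategy, and two of your routes are fine and even pleasant: the forward transfer $E\mapsto E+iE$ does give (ii)$_T\Rightarrow$(iv) and (iii)$_T\Rightarrow$(iv) directly from the complex theorems (the paper instead deduces these from a real divergent-orbit result), and your backward transfer is essentially a re-derivation of the paper's Lemma~\ref{Lem:Bill} (Johnson's lemma on projections of closed subspaces of $X\oplus Y$), organized through the graph of $L$ with $L^2=-I$. The genuine gap is the implication (i)$_T\Rightarrow$(iv), which you dispatch in one sentence by asserting that the ``expansion modulo compacts'' mechanism is field-agnostic and forbids infinite-dimensional closed subspaces of $X$ inside $\Rec(T)$ (resp.\ $\HC(T)\cup\{0\}$). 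That assertion is exactly the nontrivial point, and it cannot be obtained by simply feeding the complex machinery the complexified subspace: if $E\subset X$ is a recurrent subspace and $\sigma_e(\til{T})\cap\cl{\DD}=\varnothing$, applying the complex divergent-orbit result (Proposition~\ref{Pro:divergent}) to $\til{E}=E+iE$ only produces a vector $z=x+iy$ with $\|\til{T}^nz\|\to\infty$, i.e.\ $\max\{\|T^nx\|,\|T^ny\|\}\to\infty$; this does \emph{not} contradict $x,y\in\Rec(T)$, since recurrence of $x$ and of $y$ gives return times along possibly disjoint sequences (this is precisely the $T\oplus T$/joint-recurrence obstruction, the same reason you cannot complexify a recurrent or hypercyclic subspace in the forward direction). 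What is needed is a divergent-orbit vector lying in the \emph{real} subspace $E$ itself, and producing it is where the paper does real work: Lemma~\ref{Lem:real.8.14} builds a normalized basic sequence $(e_j)\subset E$ whose complexifications $e_j+i0$ are $\Kc$-null for the relevant compact families, and then one uses that the vector constructed in \cite[Lemma~8.17]{BaMa2009} lies in the closed \emph{real-linear} span of the chosen sequence, hence is of the form $x+i0$ with $x\in E$ (Proposition~\ref{Pro:real.divergent}). Your proposal contains no substitute for this step, and you in fact identify the backward transfer, rather than this, as the main obstacle.

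A secondary, repairable issue sits in your unbounded-graph case: from normalized $w_k\in D$ with $\|Lw_k\|\to 0$ you cannot in general ``thin to a basic sequence'' (a seminormalized sequence converging weakly to a nonzero vector has no basic subsequence). In your situation this can be rescued — since $J(Z)$ is weakly closed and $Z\cap X=\{0\}$, no subsequence of $(w_k)$ can converge weakly to a nonzero vector, and then the standard selection principles apply — but the cleaner route is the one the paper takes in Lemma~\ref{Lem:Bill}: prove the localized claim (in every infinite-dimensional closed subspace of $Z$ there is a vector with unit first coordinate and arbitrarily small second coordinate) and run Mazur's construction, adding functionals of the form $(x^*,0)$ so that the first coordinates themselves form a basic sequence; the absolute-convergence argument you give for $\cl{\lspan\{w_k\}}\subset D$ is then exactly the paper's closing argument. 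With these two repairs — above all a proof of the real divergent-orbit statement — your scheme matches the paper's proof.
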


Theorem~\ref{The:Banach-real} is a modest extension of the previous theory to the real setting. Its proof is highly based on the proofs of Theorems~\ref{The:GonLeMon} and \ref{The:Banach-complex}, but we require some basic lemmas in order to pass from subspaces for the real-linear system to subspaces for its complexification and vice versa, see Section \ref{Sec:6real}.\\[-5pt]

Since the weak-mixing property implies quasi-rigidity for every operator on a separable Banach space (see \cite[Theorem~2.5 and Remark~2.6]{GriLoPe2022}), from Theorems~\ref{The:GonLeMon}, \ref{The:Banach-complex} and \ref{The:Banach-real} we obtain the announced result:

\begin{corollary}\label{Cor:hyp.iff.rec}
	Let $T:X\longrightarrow X$ be a weakly-mixing operator on a (real or complex) separable Banach space $X$. Then the following statements are equivalent:
	\begin{enumerate}[{\em(i)}]
		\item $T$ has a hypercyclic subspace;
		
		\item $T$ has a recurrent subspace.
	\end{enumerate}
\end{corollary}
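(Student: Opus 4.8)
The plan is to derive the equivalence directly from the three spectral characterizations already established: Theorem~\ref{The:GonLeMon} in the complex case, Theorem~\ref{The:Banach-complex} in the complex case, and Theorem~\ref{The:Banach-real} in the real case. The key observation is that the ``hard analytic work'' has been fully absorbed into those theorems, so that what remains is essentially a bookkeeping argument about which hypotheses are available.

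First I would split into the complex and the real case. In the complex case, since $T$ is weakly-mixing it satisfies the Hypercyclicity Criterion, and by the cited result \cite[Theorem~2.5 and Remark~2.6]{GriLoPe2022} it is in particular quasi-rigid. Hence both Theorem~\ref{The:GonLeMon} and Theorem~\ref{The:Banach-complex} apply to $T$. But the final condition in each of these two theorems is \emph{exactly the same}: $\sigma_e(T) \cap \cl{\DD} \neq \varnothing$. Therefore: $T$ has a hypercyclic subspace $\iff \sigma_e(T) \cap \cl{\DD} \neq \varnothing \iff T$ has a recurrent subspace. In the real case, the argument is identical but uses Theorem~\ref{The:Banach-real}: part~(a) gives that $T$ has a hypercyclic subspace iff $\sigma_e(\til{T}) \cap \cl{\DD} \neq \varnothing$, and part~(b) (applicable because weak-mixing implies quasi-rigidity, again by \cite{GriLoPe2022}) gives that $T$ has a recurrent subspace iff $\sigma_e(\til{T}) \cap \cl{\DD} \neq \varnothing$. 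Chaining these two biconditionals through the common middle term $\sigma_e(\til{T}) \cap \cl{\DD} \neq \varnothing$ closes the loop.

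The only point that needs a sentence of care is the implication ``weakly-mixing $\Rightarrow$ quasi-rigid'', which is not proved in this excerpt but is cited; I would simply invoke it explicitly. With that in hand, the proof is a two-line deduction in each case, and there is no genuine obstacle: the substance lives entirely inside Theorems~\ref{The:GonLeMon}, \ref{The:Banach-complex} and \ref{The:Banach-real}, whose spectral conditions were deliberately phrased so as to coincide, precisely so that this corollary would follow formally.
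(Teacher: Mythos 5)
Your proposal is correct and follows essentially the same route as the paper: the corollary is deduced by chaining the common spectral condition $\sigma_e(T)\cap\cl{\DD}\neq\varnothing$ (resp.\ $\sigma_e(\til{T})\cap\cl{\DD}\neq\varnothing$) from Theorems~\ref{The:GonLeMon}, \ref{The:Banach-complex} and \ref{The:Banach-real}, after invoking that weak-mixing implies quasi-rigidity via \cite{GriLoPe2022}. No further comment is needed.
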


\section{Sufficient conditions for recurrent subspaces}\label{Sec:4Basic}

In this section we prove Theorem~\ref{The:Banach-sufficient}. As in the proof of Theorem~\ref{The:Montes}:
\begin{enumerate}[--]
	\item we can extract a basic sequence from the family $(E_n)_{n\in\NN}$ of infinite-dimensional closed subspaces, by using the Mazur's theorem (see \cite[Vol I, page 4]{LindTzaf1977} or \cite[Lemma C.1.1]{BaMa2009});
	
	\item we will ``perturb'' that initial basic sequence to obtain an equivalent one formed by vectors with a strong recurrent property (see \cite[Lemmas 8.4 and C.1.2]{BaMa2009} or \cite[Lemma 10.6]{GrPe2011});
\end{enumerate}
We refer to the textbooks \cite{Diestel1984,LindTzaf1977} for any unexplained but standard notion about Schauder basis and basic sequences.

\subsection{Proof of Theorem~\ref{The:Banach-sufficient}}

By the Mazur's theorem (see \cite[Vol I, page 4]{LindTzaf1977} or \cite[Lemma C.1.1]{BaMa2009}) there is a normalized basic sequence $(e_n)_{n\in\NN}$ such that $e_n \in E_n$ for each $n \in \NN$, so it is a Schauder basis of $E:=\cl{\lspan\{ e_n : n \in\NN \}}$. Moreover, for every strictly increasing sequence of integers $(l_n)_{n\in\NN}$, the sequence $(e_{l_n})_{n\in\NN}$ is a normalized Schauder basis of the closed subspace
\[
\cl{\lspan\left\{ e_{l_n} : n \in \NN \right\}} \subset E \subset X.
\]
For each $n \in \NN$ consider the {\em coefficient functional} $e_n^* : E \longrightarrow \KK$ such that for each $x \in E$ with $x=\sum_{n\in\NN} a_n e_n$ then $\ep{e_n^*}{x} = \ep{e_n^*}{\sum_{k\in\NN} a_k e_k} = a_n$, which are uniformly continuous since $(e_n)_{n\in\NN}$ is normalized (see \cite[Vol. I, 1.b]{LindTzaf1977} or \cite[Appendix C.1]{BaMa2009}). Denote by $\|e_n^*\|$ the norm of $e_n^*$ as a functional in $E$ and write $K := 1 + \max_{n\in\NN} \|e_n^*\|$.\\[-5pt]

By condition (a) there is a dense subset $Y$ of $X$ such that $T^{k_n}x \to x$ for every $x \in Y$.
\begin{claim}
	There exists:
	\begin{enumerate}[{\em1)}]
		\item an increasing sequence of integers $(l_n)_{n\in\NN}$, subsequence of $(k_n)_{n\in\NN}$;
		
		\item a sequence of vectors $(f_{l_n})_{n\in\NN} \subset Y$, and the related sequence $(g_{l_n})_{n\in\NN} := (f_{l_n}-e_{l_n})_{n\in\NN}$;
	\end{enumerate}
	with the properties:
	\begin{enumerate}[{\em(i)}]
		\item $\|f_{l_n}-e_{l_n}\| = \|g_{l_n}\| < \frac{1}{2^{n+1}K}$ for every $n \in \NN$;
	
		\item $\|T^{l_j}g_{l_n}\| < \frac{1}{2^{j+n}}$ for each $j \in \NN$ and $n > j$;
		
		\item $\|T^{l_j}f_{l_n} - f_{l_n}\| < \frac{1}{2^{j+n}}$ for each $j \in \NN$ and $1\leq n\leq j$.
	\end{enumerate}
\end{claim}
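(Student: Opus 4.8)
The plan is to construct $(l_n)_n$ and $(f_{l_n})_n$ by a single induction on $n$, arranging that at the end of stage $n$ every instance of (i)--(iii) that mentions only indices $\le n$ already holds, while the instances of (ii) with $j\ge n$ and of (iii) with $j>n$ are deferred to later stages. The first observation is that (i) and (ii) can be absorbed into one tolerance: for fixed $j<n$ one has $\|T^{l_j}g_{l_n}\|\le\|T^{l_j}\|\,\|g_{l_n}\|$, so, once $l_1<\dots<l_{n-1}$ and $f_{l_1},\dots,f_{l_{n-1}}$ are frozen, both (i) and (ii) at level $n$ hold as soon as $\|g_{l_n}\|<\delta_n$, where
\[
\delta_n:=\min\Bigl(\{2^{-(n+1)}/K\}\cup\bigl\{2^{-(j+n)}/(1+\|T^{l_j}\|):1\le j<n\bigr\}\Bigr),
\]
a quantity depending only on data chosen before stage $n$.

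For the inductive step, assume $l_1<\dots<l_{n-1}$ (terms of $(k_n)_n$) and $f_{l_1},\dots,f_{l_{n-1}}\in Y$ are already chosen. Since each $f_{l_i}\in Y$ we have $T^{k_p}f_{l_i}\to f_{l_i}$ as $p\to\infty$, so I would pick $p$ with $k_p>l_{n-1}$, with $\|T^{k_p}f_{l_i}-f_{l_i}\|<2^{-(n+i)}$ for $1\le i<n$, and --- the delicate requirement --- with $\|T^{k_p}e_{k_p}-e_{k_p}\|<2^{-2n-1}$, and set $l_n:=k_p$. Then, using that $Y$ is dense, choose $f_{l_n}\in Y$ with $\|f_{l_n}-e_{l_n}\|<\min(\delta_n,\,2^{-2n-1}/(1+\|T^{l_n}\|))$. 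Now (i) and (ii) at level $n$ follow from $\|g_{l_n}\|<\delta_n$; the instances of (iii) with $j=n$ and $1\le i<n$ follow from the choice of $p$; and the instance $j=n=i$ follows from $\|T^{l_n}f_{l_n}-f_{l_n}\|\le\|T^{l_n}\|\,\|f_{l_n}-e_{l_n}\|+\|T^{l_n}e_{l_n}-e_{l_n}\|+\|e_{l_n}-f_{l_n}\|<2^{-2n}$. The remaining instances of (ii) concern undefined vectors, and each instance of (iii) with a prescribed $j>n$ is discharged at stage $j$ by repeating the first part of this step with the exponents $2^{-(j+i)}$, $1\le i\le j$. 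Afterwards, condition (i) yields, via the standard perturbation/stability lemma for basic sequences (this is the role of $K$), that $(f_{l_n})_n$ is basic and equivalent to $(e_{l_n})_n$.

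The step I expect to be the main obstacle is the clause $\|T^{k_p}e_{k_p}-e_{k_p}\|<2^{-2n-1}$ used to pick $l_n$: a vector of $Y$ lying within $\delta_n$ of $e_{l_n}$ can be taken nearly fixed by $T^{l_n}$ only if $e_{l_n}$ itself is nearly fixed by $T^{l_n}$, so the construction forces one to extract the subsequence $(l_n)$ of $(k_n)$ along which the \emph{diagonal} quantities $\|T^{k_p}e_{k_p}-e_{k_p}\|$ tend to $0$. This is exactly where quasi-rigidity (not merely recurrence) and condition (b) must be combined: the point is to show, from the density of $Y$ together with the uniform bound $M:=\sup_p\|T^{k_p}\res_{E_p}\|<\infty$ and the nesting $E_1\supseteq E_2\supseteq\cdots$ (whence $e_{k_p}\in E_{k_p}\subseteq E_p$, so $\|T^{k_p}e_{k_p}\|\le M$), that such a subsequence exists, and then to run the induction relative to it. One could also bypass this point: in the later application of the Claim the diagonal term $a_nT^{l_n}f_{l_n}$ is absorbed by $|a_n|\to0$ together with $\|T^{l_n}f_{l_n}\|\le M+1$, so it would be enough to establish (iii) only for $1\le n<j$, which is immediate from the first half of the inductive step; the stated stronger form is only marginally more work and gives the cleaner statement above.
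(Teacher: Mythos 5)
Your construction reverses the order of choices used in the paper, and that reversal is exactly where the argument breaks. You fix the time $l_n$ first and only afterwards pick $f_{l_n}\in Y$ near $e_{l_n}$; the diagonal instance of (iii) (the case $n=j$) then has to come from your auxiliary requirement $\|T^{k_p}e_{k_p}-e_{k_p}\|<2^{-2n-1}$ along the subsequence being extracted. You flag this as the main obstacle and assert that it should follow from the density of $Y$ together with $M:=\sup_p\|T^{k_p}\res_{E_p}\|<\infty$ and the nesting of the $E_p$'s, but it does not: condition (b) only gives $\|T^{k_p}e_{k_p}\|\le M$, i.e.\ boundedness of the image, and says nothing about $T^{k_p}e_{k_p}$ returning \emph{near} $e_{k_p}$ at the specific time $k_p$; likewise quasi-rigidity controls $T^{k_m}y$ for a \emph{fixed} $y\in Y$ as $m\to\infty$, not the moving vectors $e_{k_p}$ evaluated at their own index. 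Nothing in hypotheses (a) and (b) forces such a ``diagonal recurrence'' of the $e_{k_p}$'s, so this step is a genuine gap rather than a technicality. (Your claim that the condition is \emph{forced} is also inaccurate: it is forced only by your own choices, namely fixing $l_n$ before $f_{l_n}$ and imposing the tolerance $\|f_{l_n}-e_{l_n}\|<2^{-2n-1}/(1+\|T^{l_n}\|)$.)

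The paper proceeds in the opposite order and thereby never needs any recurrence of the $e_n$'s: at stage $i+1$ it first uses continuity of the finitely many operators $T^{l_1},\dots,T^{l_i}$ to produce $\eps>0$, then picks the approximating vector $f_{l_{i+1}}\in Y$ within $\min\{2^{-(i+2)}/K,\eps\}$ of the new basic vector (which yields (i) and (ii)), and only \emph{then} chooses the time $l_{i+1}\in\{k_m:m\in\NN\}$, large enough that $T^{l_{i+1}}$ nearly fixes \emph{all} of $f_{l_1},\dots,f_{l_{i+1}}$ simultaneously; this is possible because each of these finitely many vectors lies in $Y$ and quasi-rigidity holds along the whole sequence $(k_m)$, so the diagonal case of (iii) costs nothing. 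Your proposed ``bypass'' (prove (iii) only for $1\le n<j$ and absorb the diagonal term in the final estimate via $a\in c_0(\NN)$ and $\|T^{l_n}f_{l_n}\|\le M+1$) would indeed suffice for the purposes of Theorem~\ref{The:Banach-sufficient}, but it establishes a strictly weaker statement than the Claim, which asserts (iii) for all $1\le n\le j$. As written, therefore, the proposal neither proves the Claim nor replaces the missing step with a complete argument.
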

\begin{proof}[Proof of the Claim]
	Suppose that we have constructed $(l_n)_{n=1}^i$ and $(f_{l_n})_{n=1}^i$ with:
	\begin{enumerate}[(i)]		
		\item $\|f_{l_n}-e_{l_n}\| = \|g_{l_n}\| < \frac{1}{2^{n+1}K}$ for each $n \leq i$;
		
		\item $\|T^{l_j}g_{l_n}\| < \frac{1}{2^{j+n}}$ for each $1\leq j< i$ and $i\geq n > j$;
		
		\item $\|T^{l_j}f_{l_n}-f_{l_n}\| < \frac{1}{2^{j+n}}$ for each $1\leq j \leq i$ and $1\leq n\leq j$.
	\end{enumerate}
	By the continuity of $T$ there is $\eps>0$ such that
	\[
	\|T^{l_j}y\| < \frac{1}{2^{j+(i+1)}} \quad \text{ for every } 1\leq j\leq i \text{ and } y \in X \text{ with } \|y\|<\eps.
	\]
	Therefore, taking $f_{l_{i+1}} \in Y$ such that $\|f_{l_{i+1}}-e_{l_{i+1}}\| < \max\left\{ \frac{1}{2^{i+2}K} , \eps \right\}$ we get 
	\begin{enumerate}[(i)]
		\item $\|f_{l_{i+1}}-e_{l_{i+1}}\| = \|g_{l_{i+1}}\| < \frac{1}{2^{(i+1)+1}K}$, which is (i) for $i+1$;
		
		\item $\|T^{l_j}g_{l_{i+1}}\| < \frac{1}{2^{j+(i+1)}}$ for each $1\leq j< i+1$, which is (ii) for $i+1$.
	\end{enumerate}
	Choose $l_{i+1} \in \{ k_j : j \in \NN \}$ large enough such that $l_{i+1} > l_i$ and $\|T^{l_{i+1}}f_{l_n}-f_{l_n}\| < \frac{1}{2^{(i+1)+n}}$ for every $1\leq n\leq i+1$. This is possible since $f_{l_n} \in Y$, and it implies (iii) for $i+1$.
\end{proof}

Once the {\em Claim} is proved, by condition (i) we get that
\[
\sum_{n\in\NN} \|e_{l_n}^*\| \cdot \|f_{l_n}-e_{l_n}\| = \sum_{n\in\NN} \|e_{l_n}^*\| \cdot \|g_{l_n}\| \overset{\text{(i)}}{<} \sum_{n\in\NN} \frac{1}{2^{n+1}} = \frac{1}{2} < 1,
\]
so \cite[Lemmas 8.4 and C.1.2]{BaMa2009} or \cite[Lemma 10.6]{GrPe2011} imply that $(f_{l_n})_{n\in\NN}$ is a basic sequence equivalent to $(e_{l_n})_{n\in\NN}$. It follows that $(f_{l_n})_{n\in\NN}$ is a Schauder basis of
\[
F := \cl{\lspan\{ f_{l_n} : n \in \NN \}} \subset X,
\]
which is an infinite-dimensional closed subspace of $X$. We claim that $T^{l_j}x \to x$ for all $x \in F$: indeed, given $x \in F$ there is $a = (a_n)_{n\in\NN} \in c_0(\NN)$ such that
\[
x = \sum_{n\in\NN} a_n f_{l_n} = \sum_{n\in\NN} a_n (e_{l_n}+g_{l_n}) = \sum_{n\in\NN} a_n e_{l_n} + \sum_{n\in\NN} a_n g_{l_n},
\]
where $\sum_{n\in\NN} a_n e_{l_n}$ is convergent since $(f_{l_n})_{n\in\NN}$ and $(e_{l_n})_{n\in\NN}$ are equivalent basic sequences, and $\sum_{n\in\NN} a_n g_{l_n}$ is an absolutely convergent series by (i). Hence
\begin{eqnarray}
	\hspace{-0.4cm}\|T^{l_j}x - x\| &=& \left\| \left( \sum_{n\leq j} a_n T^{l_j}f_{l_n} \right) + T^{l_j}\left( \sum_{n>j} a_n g_{l_n} \right) + T^{l_j}\left( \sum_{n>j} a_n e_{l_n} \right) - \sum_{n\in\NN} a_n f_{l_n}\right\| \nonumber\\[10pt]
	&\leq& \left\| \sum_{n\leq j} a_n (T^{l_j}f_{l_n} - f_{l_n}) \right\| + \left\| \sum_{n>j} a_n T^{l_j}g_{l_n} \right\| + \left\|T^{l_j}\left( \sum_{n>j} a_n e_{l_n} \right)\right\| + \left\| \sum_{n>j} a_n f_{l_n} \right\| \nonumber\\[10pt]
	&\leq& \|a\|_{\infty} \left( \sum_{n\leq j} \|T^{l_j}f_{l_n} - f_{l_n}\| + \sum_{n>j} \|T^{l_j}g_{l_n}\| \right) \nonumber\\[10pt]
	&\ & + \ \left\|T^{l_j}\res_{E_{l_{j+1}}}\right\| \cdot \left\| \sum_{n>j} a_n e_{l_n} \right\| + \left\| \sum_{n>j} a_n f_{l_n} \right\| \nonumber\\[10pt]
	&\underset{\text{(b)}}{\overset{\text{(ii),(iii)}}{<}}& \|a\|_{\infty} \sum_{i>j} \frac{1}{2^i} + \sup_{n\in\NN} \left\|T^{k_n}\res_{E_n}\right\| \cdot \left\| \sum_{n>j} a_n e_{l_n} \right\| + \left\| \sum_{n>j} a_n f_{l_n} \right\| \longrightarrow 0, \nonumber
\end{eqnarray}
when $j \to \infty$ since $(e_{l_n})_{n\in\NN}$ and $(f_{l_n})_{n\in\NN}$ are basic sequences.\QEDh

\subsection{Comments on Theorem~\ref{The:Banach-sufficient}}\label{SubSec:4.2comments}

The previous proof allows us to relax the hypothesis of Theorem~\ref{The:Banach-sufficient} in two different ways: we can delete the {\em separability} hypothesis; and we can assume just {\em local quasi-rigidity}:

\begin{remark}[\textbf{Non-Separability}]\label{Rem:non-separability.1}
	To study hypercyclicity it is crucial to assume the {\em separability} of the underlying space, and in fact, Theorem~\ref{The:Montes} applies to bounded linear operators on {\em separable} Banach spaces. However, as it is mentioned in \cite{CoMaPa2014}, the {\em separability} is not a necessary assumption for recurrence. Note that Theorem~\ref{The:Banach-sufficient} is still true, with the same proof, if the underlying Banach space is non-separable.
\end{remark}

\begin{remark}[\textbf{Local Quasi-Rigidity is allowed in Theorem~\ref{The:Banach-sufficient}}]\label{Rem:local.q-r.1}
	Let $(E_n)_{n\in\NN}$ be the sequence of subspaces from condition (b) of Theorem~\ref{The:Banach-sufficient}. The previous proof still holds if we replace condition (a) by the following:
	\begin{enumerate}[(a$^*$)]
		\item {\em there is a set of vectors $Y \subset X$ with $E_1 \subset \cl{Y}$ such that $T^{k_n}x \to x$ for all $x \in Y$}.
	\end{enumerate}
	This last condition is a kind of {\em local quasi-rigidity} and, contrary to the hypercyclicity case, an operator does not need to be recurrent in order to have a recurrent subspace since $T \in \Lc(X)$ is hypercyclic as soon as $\HC(T)\neq\varnothing$ while the recurrence property demands that $\Rec(T)$ is dense (see the definition of recurrence used in this paper and the works \cite{CoMaPa2014,BesMePePu2016,BoGrLoPe2022,CarMur2022_MS,CarMur2022_arXiv,GriLo2023,GriLoPe2022}). Indeed, given $\lambda \in \RR$ with $|\lambda|\neq 1$ we can consider the operator
	\[
	T := I \oplus \lambda I : X\oplus X \longrightarrow X\oplus X,
	\]
	where $I:X\longrightarrow X$ is the identity operator on a Banach space $X$. Clearly $T$ is not recurrent (i.e.\ the set of recurrent vectors $\Rec(T)$ is not dense) but it contains a recurrent subspace. Note that $T$ fulfills property (a$^*$) for any sequence $(k_n)_{n\in\NN}$ whenever $E_1 \subset X\oplus\{0\}$.
\end{remark}

\section{The complex case}\label{Sec:5complex}

In this section the underlying Banach space $X$ will be complex. We prove Theorem~\ref{The:Banach-complex} characterising the quasi-rigid operators admitting a recurrent subspace in a way similar to Theorem~\ref{The:GonLeMon} for weakly-mixing operators admitting a hypercyclic subspace. We start by studying the essential spectrum for recurrent operators.

\subsection{Essential spectrum of recurrent operators}

In the proof of Theorem~\ref{The:GonLeMon} the {\em left-essential spectrum} of the operator $T:X\longrightarrow X$ plays a fundamental role even though in the statement just appears the {\em essential spectrum}. This happens because both sets coincide when $T$ is hypercyclic (see \cite{GonLeMon2000}). Here we prove that the same holds for recurrent operators and we will use it to prove Theorem~\ref{The:Banach-complex}.\\[-5pt]

Let us recall why the latter is true for hypercyclic operators. Following the general theory of {\em Fredholm} operators, see for instance \cite[Vol. I, 2.c]{LindTzaf1977}, we have that:
\begin{enumerate}[(a)]
	\item $\lambda \in \sigma_{\ell e}(T)$ if and only if $T-\lambda$ is not a {\em left-Fredholm} operator. Recall that a bounded linear operator $S \in \Lc(X)$ is {\em left-Fredholm} if the following conditions hold:
	\[
	\Ran(S) \text{ is closed} \qquad \text{and} \qquad \dim \Ker(S) < \infty.
	\]
	
	\item $\lambda \in \sigma_e(T)$ if and only if $T-\lambda$ is not a {\em Fredholm} operator. Recall that a bounded linear operator $S \in \Lc(X)$ is {\em Fredholm} if the following conditions hold:
	\[
	\Ran(S) \text{ is closed,} \qquad \dim \Ker(S)<\infty \qquad \text{and} \qquad \codim\Ran(S)<\infty.
	\]
\end{enumerate}
Clearly $\sigma_{\ell e}(T) \subset \sigma_e(T)$. Conversely, given $\lambda \in \sigma_e(T)$ for which $\Ran(T-\lambda)$ is dense we can deduce that $\lambda \in \sigma_{\ell e}(T)$. This is why we have the equality $\sigma_{\ell e}(T) = \sigma_e(T)$ for every hypercyclic operator $T$, since hypercyclicity implies that $\Ran(T-\lambda)$ is dense for every $\lambda \in \CC$ (see for instance \cite[Lemma~2.53]{GrPe2011}). For a recurrent operator $T$, the previous argument just gives that $\sigma_{\ell e}(T)\setminus\TT = \sigma_e(T)\setminus\TT$ since in this case $\Ran(T-\lambda)$ is dense when $\lambda \in \CC\setminus\TT$, but not necessarily dense when $\lambda \in \TT$ (see \cite[Proposition~2.14]{CoMaPa2014}).\newpage

However, the following argument allows us to show the complete equality between both sets for every recurrent operator:

\begin{lemma}\label{Lem:left.essential}
	Let $X$ be a complex Banach space and let $T \in \Lc(X)$. Suppose that $\Omega \subset \CC$ is a non-empty connected open set with the property that $\Omega\setminus\sigma_p(T^*)\neq\varnothing$. Then we have that
	\[
	\sigma_{\ell e}(T) \cap \cl{\Omega}\neq\varnothing \quad \text{ if and only if } \quad \sigma_e(T) \cap \cl{\Omega}\neq\varnothing.
	\]
\end{lemma}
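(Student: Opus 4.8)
The plan is to exploit the general Fredholm/semi-Fredholm theory together with the fact that the semi-Fredholm index is locally constant on connected open sets. Recall from the discussion above that $\lambda\in\sigma_{\ell e}(T)$ iff $T-\lambda$ is not left-Fredholm, i.e.\ either $\Ran(T-\lambda)$ is not closed or $\dim\Ker(T-\lambda)=\infty$; and $\lambda\in\sigma_e(T)$ iff $T-\lambda$ is not Fredholm. The inclusion $\sigma_{\ell e}(T)\subset\sigma_e(T)$ is trivial, so only one implication has content: assuming $\sigma_{\ell e}(T)\cap\cl\Omega=\varnothing$ we must show $\sigma_e(T)\cap\cl\Omega=\varnothing$, that is, every $\lambda\in\cl\Omega$ for which $T-\lambda$ is left-Fredholm is in fact Fredholm (i.e.\ also has $\codim\Ran(T-\lambda)<\infty$).

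First I would record that $\sigma_{\ell e}(T)\cap\cl\Omega=\varnothing$ means $T-\lambda$ is a left-Fredholm (hence semi-Fredholm) operator for every $\lambda$ in the open connected set $\Omega$. Then I would invoke the classical stability theorem for semi-Fredholm operators: on a connected open set on which $T-\lambda$ is semi-Fredholm, the index $\ind(T-\lambda)=\dim\Ker(T-\lambda)-\codim\Ran(T-\lambda)\in\ZZ\cup\{-\infty\}$ is constant, and moreover $\dim\Ker(T-\lambda)$ is constant and finite except on an isolated set of points where it jumps up. The key extra input is the hypothesis $\Omega\setminus\sigma_p(T^*)\neq\varnothing$: pick $\lambda_0\in\Omega$ with $\lambda_0\notin\sigma_p(T^*)$, so $\Ker(T^*-\lambda_0)=\{0\}$. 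Since $T-\lambda_0$ is left-Fredholm its range is closed, and a closed range together with $\Ker((T-\lambda_0)^*)=\{0\}$ forces $\Ran(T-\lambda_0)=X$ by the closed range theorem; hence $\codim\Ran(T-\lambda_0)=0$ and $\dim\Ker(T-\lambda_0)<\infty$, so $T-\lambda_0$ is Fredholm with a well-defined finite index $d:=\ind(T-\lambda_0)\geq 0$.

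Next, by constancy of the index on the connected open set $\Omega$, every $T-\lambda$ with $\lambda\in\Omega$ is Fredholm of index $d$; in particular $\codim\Ran(T-\lambda)=\dim\Ker(T-\lambda)-d<\infty$ for all $\lambda\in\Omega$, so $\Omega\cap\sigma_e(T)=\varnothing$. It remains to extend this to the boundary: let $\lambda\in\cl\Omega\setminus\Omega$. By assumption $\lambda\notin\sigma_{\ell e}(T)$, so $T-\lambda$ is left-Fredholm, in particular semi-Fredholm. Now I would use that the set of semi-Fredholm operators is open in $\Lc(X)$ and the index is continuous (locally constant) there: a punctured neighbourhood of $\lambda$ meets $\Omega$, on which the index equals $d$, so by local constancy $\ind(T-\lambda)=d$ as well, and again $\codim\Ran(T-\lambda)=\dim\Ker(T-\lambda)-d<\infty$, giving $\lambda\notin\sigma_e(T)$. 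Thus $\sigma_e(T)\cap\cl\Omega=\varnothing$, completing the proof.

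**Main obstacle.** The only delicate point is the invocation of the semi-Fredholm stability/index-constancy theorems in the precise form needed (constancy of the index on connected components of the semi-Fredholm domain, and the jump behaviour of $\dim\Ker$); these are standard but I would want to cite them carefully from, e.g., \cite{Conway1989} or \cite[Vol.\ I, 2.c]{LindTzaf1977}. Everything else — the closed range theorem step that turns ``left-Fredholm plus trivial cokernel'' into surjectivity at the single good point $\lambda_0$, and the extension from $\Omega$ to $\cl\Omega$ via openness of the semi-Fredholm set — is routine once that machinery is in place.
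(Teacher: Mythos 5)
Your proof is correct and follows essentially the same route as the paper's: both rest on the openness of the semi-Fredholm set and the (discrete) continuity of the index, use a point $\lambda_0\in\Omega\setminus\sigma_p(T^*)$ (dense range plus closed range, hence $\codim\Ran(T-\lambda_0)=0$) to pin the index down to a finite non-negative constant, and conclude that any $\lambda\in\sigma_e(T)\cap\cl{\Omega}$ outside $\sigma_{\ell e}(T)$ would have $\codim\Ran(T-\lambda)=\infty$, i.e.\ index $-\infty$, contradicting that constant. The only difference is cosmetic: the paper enlarges $\cl{\Omega}$ to a connected open set $U$ on which the index is constant and argues by contradiction at a single point, while you treat boundary points of $\Omega$ directly via local constancy of the index.
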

\begin{proof}
	Since $\sigma_{\ell e}(T) \subset \sigma_e(T)$ we just consider the case $\sigma_e(T) \cap \cl{\Omega} \neq \varnothing$. Suppose then by contradiction that $\sigma_{\ell e}(T) \cap \cl{\Omega} = \varnothing$, thus
	\[
	\cl{\Omega} \subset \left\{ \lambda \in \CC : \Ran(T-\lambda) \text{ is closed} \text{ and } \dim \Ker(T-\lambda) < \infty \right\},
	\]
	so the index
	\[
	\ind(T-\lambda):= \dim \Ker(T-\lambda) - \codim\Ran(T-\lambda),
	\]
	is well defined as an element of $\ZZ \cup \{\pm\infty\}$ for every $\lambda \in \cl{\Omega}$ (see \cite[Vol I, 2.c]{LindTzaf1977}). Since the set of semi-Fredholm operators is a norm-open subset of $\Lc(X)$ and the index function is norm-discrete-continuous on it (see \cite[Theorem 2.2]{OSearcoid1992}), we deduce that there is a connected open set $U \subset \CC$ with $\cl{\Omega} \subset U$ and such that the index function
	\[
	\lambda \mapsto \ind(T-\lambda) \in \ZZ \cup \{\pm\infty\},
	\]
	is constant on $U$. Moreover, this value is finite since given any $\lambda \in \Omega \setminus \sigma_p(T^*)$, we get that $\Ran(T-\lambda)$ is closed, $T-\lambda$ has dense range, $\codim\Ran(T-\lambda)=0$, and then we have the equality $\ind(T-\lambda) = \dim\Ker(T-\lambda) \in \NN_0$. Finally, given $\mu \in \sigma_e(T) \cap \cl{\Omega} \subset U$ we were assuming that $\mu \notin \sigma_{\ell e}(T)$, but then
	\[
	\Ran(T-\mu) \text{ is closed} \qquad \text{and} \qquad \dim \Ker(T-\mu) < \infty,
	\]
	so we necessarily have that $\codim\Ran(T-\mu)=\infty$, and hence $\ind(T-\mu)=-\infty$, which yields a contradiction since we have checked that $\ind(T-\mu) \in \NN_0$.
\end{proof}

\begin{proposition}\label{Pro:left.essential}
	Let $X$ be a complex Banach space and let $T \in \Lc(X)$. If $\sigma_p(T^*)$ has empty interior, then we have the equality $\sigma_{\ell e}(T) = \sigma_e(T)$.
\end{proposition}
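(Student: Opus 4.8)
The plan is to derive Proposition~\ref{Pro:left.essential} from Lemma~\ref{Lem:left.essential} by a short connectedness argument. Since $\sigma_{\ell e}(T) \subset \sigma_e(T)$ always holds (as noted above), it suffices to prove the reverse inclusion $\sigma_e(T) \subset \sigma_{\ell e}(T)$. I would argue by contradiction: suppose there exists $\mu \in \sigma_e(T) \setminus \sigma_{\ell e}(T)$, and aim to contradict the conclusion of Lemma~\ref{Lem:left.essential}.

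First I would exploit that $\sigma_{\ell e}(T)$ is a \emph{compact}, hence closed, subset of $\CC$, so that the distance $d$ from $\mu$ to $\sigma_{\ell e}(T)$ is strictly positive. Then the open ball $\Omega := B(\mu, d/2)$ is a non-empty connected open set whose closure $\cl{\Omega} = \cl{B(\mu,d/2)}$ is still disjoint from $\sigma_{\ell e}(T)$; this last point is the only place where a small amount of care is needed, namely choosing the radius strictly smaller than $d$ so that even the \emph{closed} ball avoids the closed set $\sigma_{\ell e}(T)$.

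Next I would invoke the hypothesis that $\sigma_p(T^*)$ has empty interior: the non-empty open set $\Omega$ cannot be contained in $\sigma_p(T^*)$, hence $\Omega \setminus \sigma_p(T^*) \neq \varnothing$. Thus $\Omega$ satisfies the hypotheses of Lemma~\ref{Lem:left.essential}. Since $\mu \in \sigma_e(T) \cap \cl{\Omega}$, we have $\sigma_e(T) \cap \cl{\Omega} \neq \varnothing$, and the Lemma yields $\sigma_{\ell e}(T) \cap \cl{\Omega} \neq \varnothing$. This contradicts the fact that $\cl{\Omega}$ was chosen disjoint from $\sigma_{\ell e}(T)$, and the proof is complete.

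There is no genuine obstacle in this argument once Lemma~\ref{Lem:left.essential} is available; the proposition is essentially a ``localization at a point'' of that lemma, obtained by surrounding an offending point $\mu \in \sigma_e(T)\setminus\sigma_{\ell e}(T)$ by a small enough ball and using the empty-interior assumption to guarantee the ball meets the complement of $\sigma_p(T^*)$.
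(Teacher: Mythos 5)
Your argument is correct and is essentially the paper's own proof: the paper likewise argues by contradiction, uses compactness of $\sigma_{\ell e}(T)$ to find a connected open neighbourhood $\Omega$ of the offending point with $\cl{\Omega}$ disjoint from $\sigma_{\ell e}(T)$, notes $\Omega\setminus\sigma_p(T^*)\neq\varnothing$ by the empty-interior hypothesis, and then contradicts Lemma~\ref{Lem:left.essential}. Your explicit choice of the ball $B(\mu,d/2)$ is just a concrete instance of that neighbourhood, so there is nothing to add.
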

\begin{proof}
	We already know that $\sigma_{\ell e}(T) \subset \sigma_e(T)$. Suppose by contradiction that there exists some element $\lambda \in \sigma_e(T) \setminus \sigma_{\ell e}(T)$. Using that the set $\sigma_{\ell e}(T)$ is compact (and hence closed) we can find a connected open neighbourhood $\Omega$ of $\lambda$ such that $\sigma_{\ell e}(T) \cap \cl{\Omega} = \varnothing$. This is a contradiction with Lemma~\ref{Lem:left.essential} since $\Omega\setminus\sigma_p(T^*) \neq \varnothing$, by the empty interior assumption, and also because $\lambda \in \sigma_e(T) \cap \cl{\Omega}$.
\end{proof}

The previous result applies to recurrent operators:

\begin{corollary}\label{Cor:left.essential}
	Let $X$ be a complex Banach space and let $T \in \Lc(X)$. If $T$ is recurrent, then we have the equality $\sigma_{\ell e}(T) = \sigma_e(T)$.
\end{corollary}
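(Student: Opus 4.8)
The plan is to deduce Corollary~\ref{Cor:left.essential} directly from Proposition~\ref{Pro:left.essential}: once we know that a recurrent operator $T$ has $\sigma_p(T^*)$ with empty interior, there is nothing left to prove. In fact I would establish the stronger statement that $\sigma_p(T^*) \subset \TT$, which obviously has empty interior in $\CC$, so that Proposition~\ref{Pro:left.essential} immediately yields $\sigma_{\ell e}(T) = \sigma_e(T)$.

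The key step is therefore to show that every eigenvalue of the adjoint of a recurrent operator is unimodular. I would fix $\lambda \in \sigma_p(T^*)$ and a non-zero functional $\phi \in X^*$ with $T^*\phi = \lambda\phi$, so that $\ep{\phi}{T^n x} = \lambda^n \ep{\phi}{x}$ for all $x \in X$ and $n \in \NN_0$. Since $T$ is recurrent, $\Rec(T)$ is dense in $X$, while $\Ker(\phi)$ is a proper closed subspace and hence cannot contain a dense set; thus there is some $x \in \Rec(T)$ with $\ep{\phi}{x} \neq 0$. Picking an increasing sequence $(k_n)_{n\in\NN}$ with $T^{k_n}x \to x$ and applying $\phi$, continuity gives $\lambda^{k_n}\ep{\phi}{x} = \ep{\phi}{T^{k_n}x} \to \ep{\phi}{x}$, hence $\lambda^{k_n} \to 1$; taking absolute values, $|\lambda|^{k_n} \to 1$ with $k_n \to \infty$, which forces $|\lambda| = 1$. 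This proves $\sigma_p(T^*) \subset \TT$.

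I do not expect a genuine obstacle here: the argument is elementary once Proposition~\ref{Pro:left.essential} is available. The only point that deserves care — and the place where the recurrence hypothesis is really used in the form adopted in this paper — is the selection of the recurrent vector $x$ outside the hyperplane $\Ker(\phi)$: this needs the density of $\Rec(T)$, not merely the existence of a single non-trivial recurrent vector, and it is precisely why the ``global'' notion of recurrence (rather than the pointwise one) is the right hypothesis for this statement.
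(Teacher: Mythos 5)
Your proposal is correct and takes essentially the same route as the paper: the paper also deduces the corollary from Proposition~\ref{Pro:left.essential} via the inclusion $\sigma_p(T^*)\subset\TT$, simply citing \cite[Proposition~2.14]{CoMaPa2014} for that inclusion, whereas you reprove it directly with the standard (and correct) eigenfunctional argument using the density of $\Rec(T)$. Nothing further is needed.
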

\begin{proof}
	If $T \in \Lc(X)$ is a recurrent operator we have that $\sigma_p(T^*) \subset \TT$ by \cite[Proposition~2.14]{CoMaPa2014}. Proposition~\ref{Pro:left.essential} yields the result.
\end{proof}

We are now ready to prove Theorem~\ref{The:Banach-complex}.

\subsection{Proof of Theorem~\ref{The:Banach-complex}}

By definition (ii) $\Rightarrow$ (i). It is also direct that (ii) $\Rightarrow$ (iii) by the Banach-Steinhaus theorem applied to the family of operators $\left\{ T^{l_n}\res_E : n\in\NN \right\}$.\\[-5pt]

To see the implications (i) $\Rightarrow$ (iv) and (iii) $\Rightarrow$ (iv) we use the following fact originally proved in \cite[Proof of Theorem 4.1]{GonLeMon2000} for bounded operators on complex separable Banach spaces:

\begin{lemma}\label{Lem:divergent}
	If $\sigma_{\ell e}(T) \cap \cl{\DD} = \varnothing$, then every infinite-dimensional closed subspace $Z \subset X$ admits a vector $x \in Z$ such that $\lim_{n\to\infty} \|T^nx\| = \infty$.
\end{lemma}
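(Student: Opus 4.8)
The plan is to argue by contradiction: suppose $\sigma_{\ell e}(T) \cap \cl{\DD} = \varnothing$ but there is an infinite-dimensional closed subspace $Z \subset X$ on which no vector has a norm-divergent orbit, i.e.\ $\sup_{n} \|T^n x\| < \infty$ for every $x \in Z$. The first step is to extract quantitative information from the spectral hypothesis. Since $\sigma_{\ell e}(T)$ is compact and disjoint from $\cl{\DD}$, there is some $r > 1$ such that $T - \lambda$ is left-Fredholm for every $\lambda$ with $|\lambda| \leq r$; equivalently, $[T]_{\Lc/\Kc}$ has spectral radius bigger than $r$ is not quite what we want — rather, the point is that the part of $T$ ``below level $r$'' is essentially invertible. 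More precisely, I would invoke the standard consequence (as used in \cite{GonLeMon2000}) that $\sigma_{\ell e}(T) \cap r\cl{\DD} = \varnothing$ lets one write, after passing to a finite-codimensional closed $T$-invariant-like subspace $X_0$ (obtained from the kernels/ranges of the relevant $T - \lambda$), a lower bound of the form $\|T^n x\| \geq c\, r^n \|x\|$ for all $x$ in a suitable infinite-dimensional closed subspace, or at least control the growth on a large subspace. The cleanest route is probably: use that left-Fredholmness of $T - \lambda$ on the disk of radius $r$ forces the quotient $T$ acting modulo a fixed finite-dimensional space to be bounded below by $r^n$ (up to constants) on cofinitely-many directions.

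The second and main step is the dimension/intersection argument. Let $N$ be the (finite) codimension of the subspace $X_0$ produced above on which $T^n$ grows at least like $r^n$. Since $Z$ is infinite-dimensional and $X_0$ has finite codimension, $Z \cap X_0$ is still infinite-dimensional, in particular non-zero: pick $0 \neq x \in Z \cap X_0$. Then on one hand $\|T^n x\| \geq c\, r^n \|x\| \to \infty$ because $x \in X_0$, and on the other hand $\sup_n \|T^n x\| < \infty$ because $x \in Z$ — contradiction. This is essentially the whole skeleton; everything hinges on correctly setting up the subspace $X_0$ with the geometric growth estimate.

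The hard part will be Step~1: turning the qualitative statement ``$T - \lambda$ is left-Fredholm for $|\lambda| \leq r$'' into the quantitative geometric lower bound $\|T^n x\| \gtrsim r^n \|x\|$ on a finite-codimensional subspace. I expect this to be handled exactly as in \cite[Proof of Theorem~4.1]{GonLeMon2000}: one considers the spectral projection of $[T]_{\Lc/\Kc}$ in the Calkin algebra associated to the component of the resolvent containing $r\cl{\DD}$, lifts it to a (non-exact) projection on $X$ modulo compacts, and uses that lifting to split off a finite-dimensional ``bad'' part, leaving an operator whose Calkin-algebra spectrum lies outside $r\cl{\DD}$, hence is bounded below by $r^n$ up to constants on the complementary subspace. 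A subtlety is that left-Fredholmness only gives closed range and finite-dimensional kernel — not finite codimension of the range — so one must be careful to phrase the conclusion in terms of a lower bound on a subspace of finite codimension rather than an honest inverse; but the argument from the cited proof is designed precisely to cope with this, and I would import it essentially verbatim, noting that the recurrence or quasi-rigidity of $T$ plays no role here (only the spectral hypothesis does). Once Step~1 is in place, Step~2 is immediate.
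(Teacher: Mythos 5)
Your Step~2 is fine, but it rests on a Step~1 statement that is simply false: under the hypothesis $\sigma_{\ell e}(T)\cap\cl{\DD}=\varnothing$ there is in general \emph{no} finite-codimensional closed subspace $X_0\subset X$ with $\|T^nx\|\geq c\,r^n\|x\|$ for all $x\in X_0$ and \emph{all} $n$. Take $T=2B$ on $\ell^2(\NN)$, $B$ the backward shift: then $\sigma_{\ell e}(T)=\sigma_e(T)=2\TT$ is disjoint from $\cl{\DD}$, yet every finite-codimensional subspace meets $\lspan\{e_0,\dots,e_k\}$ nontrivially and therefore contains a nonzero finitely supported vector $x$ with $T^mx=0$ for large $m$. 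Left-invertibility of $[T-\lambda]_{\Lc/\Kc}$ in the Calkin algebra gives lower bounds only \emph{modulo compact operators}; for a fixed $n$ this yields an estimate $\|T^nx\|\gtrsim r^n\|x\|$ only on a subspace whose codimension grows with $n$ (for $2B$, on $\{x:x_0=\dots=x_{n-1}=0\}$), so there is no single finite-codimensional $X_0$ on which the intersection argument $Z\cap X_0\neq\{0\}$ can be run. This is why your scheme would even ``prove'' too much: it would give exponentially divergent orbits on an infinite-dimensional subspace of $Z$, which the $2B$ example rules out.

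The correct route, which is the one the paper imports from \cite[Proof of Theorem~4.1]{GonLeMon2000} and sketches via \cite[Lemmas~8.15(b), 8.17 and Corollary~8.14]{BaMa2009}, quantifies the spectral hypothesis differently: one finds $\lambda>1$, $n_0$, and a \emph{countable family of compact operators} $\Kc_0$ such that every normalized $\Kc_0$-null sequence $(e_j)$ satisfies $\liminf_{j\to\infty}\|T^ne_j\|\geq\lambda^n$ for $n\geq n_0$ --- a growth estimate along a sequence inside $Z$, in the $\liminf$ sense, not on a finite-codimensional subspace uniformly in $n$. One then extracts, inside the given $Z$, a normalized basic sequence that is null for $\Kc_0$ together with a second countable family $\Kc_1$, and a separate, genuinely nontrivial construction (\cite[Lemma~8.17]{BaMa2009}) assembles from that sequence a single vector $x\in Z$, as an infinite series with carefully chosen coefficients $(\alpha_n)$ satisfying $\alpha_n\lambda^n\to\infty$, whose full orbit diverges. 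Your proposal has no ingredient playing the role of this last passage from ``each $e_j$ eventually grows'' to ``one vector whose whole orbit diverges'', and replacing it by the dimension-counting shortcut is exactly the step that fails.
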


Since $T$ is recurrent we have the equality $\sigma_e(T)=\sigma_{\ell e}(T)$ by Corollary~\ref{Cor:left.essential}. Then, if (i) or (iii) holds but $\sigma_e(T) \cap \cl{\DD} = \varnothing$ we arrive to a contradiction: the vector with divergent orbit obtained by Lemma~\ref{Lem:divergent} cannot be in a recurrent subspace, neither in the subspace described by statement (iii).\\[-5pt]

To finish the proof we show that (iv) $\Rightarrow$ (ii): suppose that (iv) holds and let $\lambda \in \sigma_e(T) \cap \cl{\mathbb{D}}$. By Corollary~\ref{Cor:left.essential} we have that $\lambda \in \sigma_{\ell e}(T)$ so that $T-\lambda$ is not left-Fredholm and we can apply \cite[Proposition~D.3.4]{BaMa2009} to get an infinite-dimensional closed subspace $E \subset X$ and a compact operator $R \in \Lc(X)$ such that $(T-R)\res_E = \lambda I\res_E$, which implies that
\[
\left\|(T-R)^n\res_E\right\| \leq 1 \quad \text{ for every } n \in \NN.
\]
From now we modify the proof of \cite[Lemma 8.16]{BaMa2009}: by assumption $T$ is quasi-rigid with respect to some sequence $(k_n)_{n\in\NN}$. For each $n \in \NN$ we can write
\[
T^{k_n} = (T-R)^{k_n} + R_n,
\]
where $R_n$ is a compact operator. By \cite[Lemma 8.13]{BaMa2009}, one can find a non-increasing sequence $(E_n)_{n\in\NN}$ of finite-codimensional (and hence infinite-dimensional) closed subspaces of $E$, such that
\[
\|R_n\res_{E_n}\| \leq 1 \quad \text{ for every } n \in \NN.
\]
Then
\[
\left\|T^{k_n}\res_{E_n}\right\| = \left\| [ (T-R)^{k_n} + R_n ]\res_{E_n} \right\| \leq \left\| (T-R)^{k_n}\res_{E_n} \right\| + \left\| R_n\res_{E_n} \right\| \leq 2 < \infty.
\]
By Theorem~\ref{The:Banach-sufficient} we get (ii) for some subsequence $(l_n)_{n\in\NN}$ of $(k_n)_{n\in\NN}$.\QEDh

\begin{remark}
	Corollary~\ref{Cor:hyp.iff.rec} is now proved in the \textbf{complex case} as a direct consequence of Theorems~\ref{The:GonLeMon} and \ref{The:Banach-complex}. However, in view of Lemma~\ref{Lem:divergent} there is an alternative proof without using Theorem~\ref{The:Banach-complex}:
	\begin{enumerate}[--]
		\item Actually, it is clear that having a hypercyclic subspace implies having a recurrent subspace since every hypercyclic vector is recurrent.
		
		\item On the other hand, Lemma~\ref{Lem:divergent} implies that: a necessary condition for a weakly-mixing operator to admit a recurrent subspace is that the left-essential spectrum must intersect the closed unit disc, since a vector with a divergent orbit is not recurrent. Hence, Theorem~\ref{The:GonLeMon} implies that the operator must have a hypercyclic subspace.
	\end{enumerate}
	In Section~\ref{Sec:6real} we obtain the \textbf{real case} of Corollary~\ref{Cor:hyp.iff.rec}.
\end{remark}

\subsection{Comments on Theorem~\ref{The:Banach-complex}}

Before the comments on Theorem~\ref{The:Banach-complex} let us introduce the following standard notation:

\begin{definition}
	Let $X$ be a Banach space:
	\begin{enumerate}[(a)]
		\item Given a Banach space $Y$ we will denote by $\Lc(X,Y)$ the {\em set of bounded linear operators} from $X$ to $Y$.
		
		\item Let $(Y_j)_{j\in J}$ and $\Kc \subset \bigcup_{j\in J} \Lc(X,Y_j)$ be families of Banach spaces and compact operators respectively. We say that a sequence $(x_n)_{n\in\NN} \subset X$ is {\em $\Kc$-null} if $\lim_{n\to\infty} Kx_n = 0$ for every operator $K \in \Kc$.
	\end{enumerate}
\end{definition}

As we did for Theorem~\ref{The:Banach-sufficient} in Subsection~\ref{SubSec:4.2comments}, we can relax the hypothesis of Theorem~\ref{The:Banach-complex}:

\begin{remark}[\textbf{Non-Separability}]\label{Rem:non-separability.2}
	Theorem~\ref{The:Banach-complex} is still true if $X$ is a {\em non-separable} space. This is not completely direct since the implications (i) $\Rightarrow$ (vii) and (vi) $\Rightarrow$ (vii) are deduced from Lemma~\ref{Lem:divergent} which is only valid for complex {\em separable} Banach spaces with the proof of \cite[Proof of Theorem 4.1]{GonLeMon2000}. Nevertheless, if one reads carefully the proof given for those implications in \cite[Section 8.3]{BaMa2009}, it can be deduced the following more general fact that uses directly the {\em essential spectrum}:
	
	\begin{proposition}\label{Pro:divergent}
		Let $X$ be a complex (and not necessarily separable) Banach space and let $T \in \Lc(X)$. If $\sigma_e(T) \cap \cl{\DD} = \varnothing$, then every infinite-dimensional closed subspace $Z \subset X$ admits a vector $x \in Z$ such that $\lim_{n\to\infty} \|T^nx\| = \infty$.
	\end{proposition}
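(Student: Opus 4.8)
The plan is to transplant into the non-separable setting the argument that proves Lemma~\ref{Lem:divergent}, namely that of \cite[Proof of Theorem 4.1]{GonLeMon2000}, carried out in full detail in \cite[Section 8.3]{BaMa2009}. In that argument the separability of $X$ enters \emph{only} through the appeal to \cite{GonLeMon2000} controlling the left-essential spectrum $\sigma_{\ell e}(T)$; everything else is pure semi-Fredholm and index theory, valid over an arbitrary complex Banach space. Since here we assume outright that $\sigma_e(T)\cap\cl{\DD}=\varnothing$ — a hypothesis on the \emph{essential} spectrum, which is meaningful for any $X$ — that appeal is no longer needed, and the construction goes through unchanged. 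So the proof amounts to a careful re-reading of \cite[Section 8.3]{BaMa2009} with $\sigma_e(T)$ in place of $\sigma_{\ell e}(T)$.

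In detail, I would first set up the Fredholm picture exactly as in the proof of Lemma~\ref{Lem:left.essential}. As $X$ is infinite-dimensional, $\sigma_e(T)\neq\varnothing$; as $\sigma_e(T)$ is compact and disjoint from $\cl{\DD}$, there is $\delta>0$ with $\sigma_e(T)\cap(1+\delta)\cl{\DD}=\varnothing$; hence $(1+\delta)\cl{\DD}$ lies in a single connected component $\Omega$ of $\CC\setminus\sigma_e(T)$, on which $T-\lambda$ is Fredholm of one fixed finite index (openness of the semi-Fredholm region and local constancy of the index, as already used in Lemma~\ref{Lem:left.essential}). From here, given an infinite-dimensional closed subspace $Z\subset X$, one runs the construction of \cite[Section 8.3]{BaMa2009} to produce a vector $x\in Z$ with $\|T^nx\|\to\infty$. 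In the easiest case, when $T-\lambda$ is invertible for some $\lambda\in\Omega$, this is immediate: then $\sigma(T)\cap(1+\delta)\cl{\DD}$ is a finite set of poles of finite rank, so the corresponding Riesz projection $P$ has finite rank, $T_0:=T\res_{\Ker P}$ is invertible with spectral radius $r(T_0^{-1})<1$, whence $\|T_0^{-n}\|\to 0$ and every nonzero $x\in Z\cap\Ker P$ — an infinite-dimensional subspace, since $P$ has finite rank — satisfies $\|T^nx\|\ge\|x\|/\|T_0^{-n}\|\to\infty$. In general $T-\lambda$ need not be invertible anywhere on $\Omega$ — for instance when the index there is nonzero — and then $(1+\delta)\cl{\DD}\subset\sigma(T)$ and the Riesz shortcut fails, so one follows instead the more delicate selection in \cite[Section 8.3]{BaMa2009}, which is phrased there with $\sigma_{\ell e}(T)$ but uses only facts that remain available under our hypothesis. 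Finally one notes that a vector with $\|T^nx\|\to\infty$ can be neither hypercyclic nor recurrent, which is how the proposition feeds back into Theorems~\ref{The:GonLeMon} and~\ref{The:Banach-complex}.

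The main obstacle — and the reason this is worth isolating as a separate statement rather than inlining it — is exactly that general, nonzero-index case: one has to verify that the divergent-orbit vector extracted in \cite[Section 8.3]{BaMa2009} is produced \emph{without} any countable or otherwise separability-dependent choice, and that the a priori weaker structure used there (only $\sigma_{\ell e}(T)$ absent from $\cl{\DD}$, hence only semi-Fredholmness) is more than compensated here by the stronger hypothesis that $\sigma_e(T)$ is absent from the slightly larger disc $(1+\delta)\cl{\DD}$. I expect this to be routine but to demand an attentive reading rather than a genuinely new idea.
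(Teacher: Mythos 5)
Your overall strategy coincides with the paper's: Remark~\ref{Rem:non-separability.2} proves Proposition~\ref{Pro:divergent} exactly by re-running the machinery of \cite[Section~8.3]{BaMa2009} under the hypothesis $\sigma_e(T)\cap\cl{\DD}=\varnothing$. The problem is that your write-up stops precisely where the content of the proposition begins: the general (nonzero-index) case is delegated to an ``attentive re-reading'' that you do not carry out, so beyond the Riesz-projection case (correct, but subsumed by the general argument) nothing is actually established. The verification you postpone is the whole point, and it amounts to isolating three statements from \cite{BaMa2009}: first, \cite[Lemma~8.15~(b)]{BaMa2009}: if $\sigma_e(T)\cap\cl{\DD}=\varnothing$ there exist $\lambda>1$, $n_0\in\NN$ and a \emph{countable} family $\Kc_0$ of compact operators on $X$ such that every normalized $\Kc_0$-null sequence $(e_j)_{j\in\NN}$ satisfies $\liminf_{j\to\infty}\|T^ne_j\|\geq\lambda^n$ for $n\geq n_0$; second, \cite[Corollary~8.14]{BaMa2009}: any infinite-dimensional closed subspace $Z$ contains a normalized basic sequence which is $\Kc$-null for any prescribed countable family $\Kc$ of compact operators; third, \cite[Lemma~8.17]{BaMa2009}: if $Z$ is separable there are a separable space $\hat{Y}$ and a countable family $\Kc_1\subset\Lc(Z,\hat{Y})$ of compact operators such that from any normalized $\Kc_1$-null sequence $(e_j)_{j\in\NN}\subset Z$ and any summable $(\alpha_n)_{n\in\NN}$ one gets $x\in\cl{\lspan_{\RR}\{e_j:j\in\NN\}}\subset Z$ with $\|T^nx\|\geq\alpha_n\limsup_{j\to\infty}\|T^ne_j\|$. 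Applying the second item to $\Kc=\Kc_0\cup\Kc_1$ and choosing $(\alpha_n)_{n\in\NN}$ summable with $\alpha_n\lambda^n\to\infty$ produces the divergent orbit; none of these steps uses separability of $X$, and the separability required of $Z$ in the third item is harmless because one may replace $Z$ by a separable infinite-dimensional closed subspace of it. Identifying this decomposition, and the fact that the only countable choices involved are attached to $T$ and to (a separable subspace of) $Z$ rather than to $X$, is the substance of the proposition.

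Two smaller inaccuracies in your sketch. The lemmas of \cite[Section~8.3]{BaMa2009} are already phrased with the essential spectrum, not with $\sigma_{\ell e}(T)$: the left-essential spectrum (and the separable-space proof) belongs to the original argument of \cite{GonLeMon2000} behind Lemma~\ref{Lem:divergent}, which is precisely what the proposition is designed to bypass; correspondingly, your diagnosis that separability enters ``only through the appeal to \cite{GonLeMon2000}'' is not really what has to be checked. These do not derail the plan, but as written the proposal records the correct expectation rather than a proof.
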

	
	If we let $X$ and $(Y_n)_{n\in\NN}$ be complex Banach spaces, $T \in \Lc(X)$, and we consider any infinite-dimensional closed subspace $Z \subset X$, then Proposition~\ref{Pro:divergent} can be proved in three steps:
	\begin{enumerate}[--]
		\item \cite[Lemma 8.15 (b)]{BaMa2009} {\em If $\sigma_e(T) \cap \cl{\DD} = \varnothing$, then one can find $\lambda > 1$, $n_0 \in \NN$ and a countable family of compact operators $\Kc_0 \subset \Lc(X)$ such that the following holds for any normalized $\Kc_0$-null sequence $(e_j)_{j\in\NN} \subset X$:
		\[
		\liminf_{j\to\infty} \|T^ne_j\| \geq \lambda^n \quad \text{ for each } n \geq n_0.
		\]}
		
		\item \cite[Lemma 8.17]{BaMa2009} {\em If $Z$ is separable, then there exist a complex separable Banach space $\hat{Y}$ and a countable family of compact operators $\Kc_1 \subset \Lc(Z,\hat{Y})$ such that the following holds for any normalized $\Kc_1$-null sequence $(e_j)_{j\in\NN} \subset Z$: given any summable sequence of positive numbers $(\alpha_n)_{n\in\NN}$, there exists some vector $x \in \cl{\lspan_{\RR}\{e_j : j \in \NN\}} \subset Z$ such that
		\[
		\|T^nx\| \geq \alpha_n \limsup_{j\to\infty} \|T^ne_j\| \quad \text{ for each } n \in \NN.
		\]}
		
		\item \cite[Corollary 8.14]{BaMa2009} {\em If $\Kc = (K_n : Z \longrightarrow Y_n)_{n\in\NN}$ is a countable family of compact operators, then $Z$ contains a normalized $\Kc$-null basic sequence.}
	\end{enumerate}
	One just needs to apply \cite[Corollary 8.14]{BaMa2009} to the countable family of compact operators $\Kc = \Kc_0 \cup \Kc_1$ and choose a summable sequence $(\alpha_n)_{n\in\NN}$ for which $\alpha_n\lambda^n \to \infty$ when $n \to \infty$. The separability of $X$ is replaced by the separability of $Z$. It is worth mentioning that the vector $x \in Z$ constructed lies in the closure of the \textbf{real-linear} span of the sequence $(e_j)_{j\in\NN}$ selected. This fact will be important in the real case, see Section~\ref{Sec:6real}.
\end{remark}

\begin{remark}[\textbf{Local Quasi-Rigidity is not allowed in Theorem~\ref{The:Banach-complex}}]\label{Rem:local.q-r.2}
	We cannot repeat the exchange of assumption done for Theorem~\ref{The:Banach-sufficient} in Remark~\ref{Rem:local.q-r.1}: the infinite-dimensional closed subspace $E \subset X$ obtained in the proof by the non left-Fredholm condition of $T-\lambda$ cannot be controlled to be included in the closure of a set $Y \subset X$ such that $T^{k_n}x \to x$ for all $x \in Y$. In fact, let $X$ be any of the complex spaces $\ell^p(\NN)$, $1\leq p<\infty$, or $c_0(\NN)$, and let $B:X\longrightarrow X$ be the well-known {\em backward shift} operator on such a space $X$. Then for any fixed $\lambda \in \DD\setminus\{0\}$ consider the operator
	\[
	T := \lambda^{-1} B \oplus \lambda I : X \oplus X \longrightarrow X \oplus X,
	\]
	where $I:X\longrightarrow X$ is the identity operator on $X$. It is easy to verify that:
	\begin{enumerate}[1)]
		\item $T$ is not quasi-rigid neither recurrent since $(x,y) \in \Rec(T)$ implies that $y=0$;
		
		\item $\lambda^{-1} B$ is quasi-rigid since the Rolewicz operator is known to be (weakly-)mixing; 
		
		\item $\lambda \in \sigma_e(T) \cap \cl{\DD}$ since $\Ker(T-\lambda)$ is infinite-dimensional;
		
		\item $T$ has no recurrent subspace: otherwise $\lambda^{-1} B$ would have a recurrent subspace, but it is well-known that $\sigma_e(\lambda^{-1}B) = \sigma_{\ell e}(\lambda^{-1}B) = \lambda^{-1}\TT = \{ \mu\in\mathbb{C} : |\mu|=\frac{1}{|\lambda|} \}$, contradiction.
	\end{enumerate}
\end{remark}

\section{The real case}\label{Sec:6real}

In this section we extend Theorems~\ref{The:GonLeMon} and \ref{The:Banach-complex} to the real case via Theorem~\ref{The:Banach-real}. Since given a real-linear operator $T:X\longrightarrow X$ one of the equivalences included in Theorem~\ref{The:Banach-real} is:
\begin{enumerate}[--]
	\item {\em the essential spectrum of the complexification $\til{T}:\til{X}\longrightarrow\til{X}$ intersects the closed unit disk};
\end{enumerate}
we need some previous results allowing us to pass from subspaces for the real-linear system to subspaces for its complexification and vice versa.\\[-5pt]

The first of such results is: given a complex infinite-dimensional closed subspace $Z$ of the complexification $\til{X}$ we want to conclude that its projection
\begin{equation}\label{eq:projection}
	P(Z) := \{ x \in X : x+iy \in Z \text{ for some } y \in X \},
\end{equation}
contains an infinite-dimensional closed subspace. Note that $P(Z)$, expressed in \eqref{eq:projection} as the {\em real part projection} of $Z$, coincides with the {\em imaginary part projection} of $Z$, namely
\[
Q(Z) := \{ y \in X : x+iy \in Z \text{ for some } x \in X \}.
\]
Indeed, $P(Z)=Q(Z)$ because a vector $x+iy$ belongs to $Z$ if and only if $i\cdot(x+iy) = -y+ix$ belongs to $Z$ and also if and only if $(-i)\cdot(x+iy) = y-ix$ belongs to $Z$. Since $\til{X}$ can be identified with the direct sum space $X\oplus X$ we will prove the following more general fact:

\begin{lemma}\label{Lem:Bill}
	Let $(X,\|\cdot\|_X)$ and $(Y,\|\cdot\|_Y)$ be (real or complex) Banach spaces and consider an infinite-dimensional closed subspace $Z \subset X\oplus Y$ of its direct sum. If we denote by
	\[
	P_X : X\oplus Y \longrightarrow X \qquad \text{ and } \qquad P_Y : X\oplus Y \longrightarrow Y,
	\]
	the standard projections on the corresponding subspace, then at least one of the subspaces $P_X(Z)$ or $P_Y(Z)$ contains an infinite-dimensional closed subspace.
\end{lemma}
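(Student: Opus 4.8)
The plan is to argue by contradiction: suppose that \emph{neither} $P_X(Z)$ nor $P_Y(Z)$ contains an infinite-dimensional closed subspace. The key structural fact I would exploit is that $Z$ itself is an infinite-dimensional Banach space, and the two restricted projections $P_X\res_Z : Z \longrightarrow X$ and $P_Y\res_Z : Z \longrightarrow Y$ are bounded linear operators whose kernels are easy to understand: $\Ker(P_X\res_Z) = Z \cap (\{0\}\oplus Y)$ and $\Ker(P_Y\res_Z) = Z \cap (X\oplus\{0\})$, and these two kernels intersect only in $\{0\}$. The first thing I would do is dispose of the case where one of these kernels is infinite-dimensional: if, say, $Z\cap(\{0\}\oplus Y)$ is infinite-dimensional, then it is an infinite-dimensional closed subspace sitting inside $\{0\}\oplus Y$, hence $P_Y(Z)$ contains an infinite-dimensional closed subspace (namely the copy of it in $Y$), contradicting our assumption; symmetrically for the other kernel. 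So we may assume both kernels are finite-dimensional.

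Now the heart of the matter: since $\Ker(P_X\res_Z)$ is finite-dimensional, passing to a finite-codimensional (hence still infinite-dimensional) closed subspace $Z_0 \subset Z$ we may assume $P_X\res_{Z_0}$ is \emph{injective}; similarly, shrinking once more, we may assume $P_Y\res_{Z_0}$ is injective as well. The idea is then to extract from $Z_0$ a normalized basic sequence $(z_n)_{n\in\NN}$ and push it forward by $P_X$. Here is where I expect the main obstacle: injectivity of $P_X\res_{Z_0}$ does \emph{not} by itself guarantee that $(P_X z_n)_{n\in\NN}$ spans an infinite-dimensional \emph{closed} subspace of $X$ — the images could fail to be a basic sequence, or could have a closure that collapses. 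To get around this I would use a gliding-hump / selection argument: build the basic sequence in $Z_0$ inductively so that at each step either $\|P_X z_n\|$ stays bounded below (in which case, after a perturbation, $(P_X z_n)$ is a basic sequence in $X$ and we are done), or $\|P_X z_n\| \to 0$ along a subsequence. In the latter case, since $z_n = (P_X z_n, P_Y z_n)$ is normalized in $X\oplus Y$, we would have $\|P_Y z_n\| \to 1$, so after normalizing, $(P_Y z_n/\|P_Y z_n\|)$ is, up to a small perturbation of the original basic sequence $(z_n)$, close to a block basic sequence, and one shows $(P_Y z_n)$ is then a basic sequence in $Y$ — again contradicting the assumption, this time via $P_Y(Z)$.

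More carefully, the cleanest route is probably: fix any normalized basic sequence $(z_n)_{n\in\NN} \subset Z$ (which exists by Mazur's theorem, as recalled in the proof of Theorem~\ref{The:Banach-sufficient}). For each $n$ write $z_n = u_n \oplus v_n$ with $u_n = P_X z_n$, $v_n = P_Y z_n$, so $\max\{\|u_n\|_X, \|v_n\|_Y\} \geq \tfrac12$ for all $n$ (using an equivalent max-norm on $X\oplus Y$). Then one of the two index sets $A = \{n : \|u_n\|_X \geq \tfrac12\}$ or $B = \{n : \|v_n\|_Y \geq \tfrac12\}$ is infinite. Say $A$ is infinite; relabel the corresponding subsequence, which is still basic, and normalize the $u_n$'s, setting $\hat u_n = u_n/\|u_n\|_X$. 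The claim is that $(\hat u_n)_{n\in\NN}$ has a subsequence that is basic in $X$: this follows from the Bessaga–Pełczyński selection principle provided $(\hat u_n)$ has no norm-convergent subsequence, and if it \emph{did} have a subsequence $\hat u_{n_k} \to u \in X$ with $\|u\|=1$, one checks $u$ lies in $P_X(Z)$ (it is a limit of elements of the closed subspace $P_X(Z)$, since $P_X(Z) \supset \lspan\{u_n\}$ and $P_X(Z)$ need not be closed — so here one must be a little careful and instead argue within $Z$ using that $(z_{n_k})$ is basic, hence $z_{n_k} \not\to$ any single vector, forcing non-convergence of $(\hat u_{n_k})$ after combining with the corresponding behaviour of $(v_{n_k})$). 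Once $(\hat u_{n_k})$ is a basic sequence in $X$, its closed linear span is an infinite-dimensional closed subspace of $P_X(Z)$, and the proof is complete. The delicate point throughout — and the step I expect to cost the most work — is ruling out the degenerate possibility that $(\hat u_n)$ (or $(\hat v_n)$) converges in norm, which must be handled by playing the $X$-part and $Y$-part of $z_n$ against the fact that $(z_n)$ is basic and normalized; this is essentially a Bessaga–Pełczyński-style argument inside $X\oplus Y$.
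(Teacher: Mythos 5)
There is a genuine gap at the very last step, and it is precisely the point the whole lemma turns on. You select from a normalized basic sequence $(z_n)=(u_n\oplus v_n)\subset Z$ a subsequence whose $X$-parts are bounded below, normalize to $\hat u_n$, extract a basic subsequence, and then conclude that ``its closed linear span is an infinite-dimensional closed subspace of $P_X(Z)$.'' This conclusion does not follow: $P_X(Z)$ need not be closed, and the closed span of a basic sequence of vectors \emph{belonging} to $P_X(Z)$ need not be \emph{contained} in $P_X(Z)$. An element of $\cl{\lspan\{u_{n_k}\}}$ is an infinite sum $\sum_k a_k u_{n_k}$, and to exhibit it as $P_X$ of something in $Z$ you need $\sum_k a_k z_{n_k}$ (or some other preimage) to converge in $Z$; convergence of the $X$-parts gives no control on the $Y$-parts $\sum_k a_k v_{n_k}$, which may diverge. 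A concrete instance where your step fails: let $Z\subset \ell^2\oplus\ell^2$ be the graph of the closed unbounded diagonal operator $Se_n=ne_n$. Then $P_X(Z)$ is the domain of $S$, which contains the normalized basic sequence $(e_n)$ — whose closed span is all of $\ell^2$ — yet $P_X(Z)$ contains no infinite-dimensional closed subspace (on such a subspace $S$ would be bounded by the closed graph theorem, which is impossible). Your dichotomy ``$\|u_n\|\geq\tfrac12$ infinitely often or $\|v_n\|\geq\tfrac12$ infinitely often'' never uses the contradiction hypothesis quantitatively, and without it the missing containment cannot be repaired. (A secondary inaccuracy: a bounded sequence with no norm-convergent subsequence need not admit a basic subsequence — e.g.\ $e_1+e_n$ in $\ell^2$ converges weakly to $e_1\neq 0$, so no subsequence is basic; the Bessaga--Pe\l czy\'nski selection needs weak nullity or a subtraction of the weak limit.)

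The paper's proof supplies exactly the missing ingredient. Assuming $P_Y(Z)$ contains no infinite-dimensional closed subspace, for \emph{every} infinite-dimensional closed subspace $V\subset Z$ the restriction $P_Y\res_V$ is not an isomorphism onto a closed subspace, hence not bounded below; this yields, for every $\delta>0$, a vector $(x,y)\in V$ with $\|x\|_X=1$ and $\|y\|_Y<\delta$. Feeding this into a Mazur-type construction (with the extra functionals $(x^*,0)$ so that the $X$-parts form a basic sequence) produces $(x_n,y_n)\in Z$ with $(x_n)$ basic, $\|x_n\|_X=1$ and $\|y_n\|_Y<2^{-n}$. The summability of $\|y_n\|_Y$ is what makes $\sum_n a_n y_n$ absolutely convergent for every $(a_n)\in c_0(\NN)$, so every $x=\sum_n a_n x_n$ has a genuine preimage $\sum_n a_n(x_n,y_n)\in Z$ and $\cl{\lspan\{x_n:n\in\NN\}}\subset P_X(Z)$. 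So to complete your argument you must replace the crude ``which component is large'' split by this quantitative use of the hypothesis (or use the strictly singular operator argument of Remark~\ref{Rem:Bill}: if neither projection of $Z$ contained an infinite-dimensional closed subspace, both restricted projections would be strictly singular, forcing the identity of $Z$ to be strictly singular, a contradiction).
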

\begin{proof}
	Suppose that $P_Y(Z) \subset Y$ does not admit any infinite-dimensional closed subspace:
	
	\begin{claim}
		For every $\delta>0$ and every infinite-dimensional closed subspace $V \subset Z$ there exist a vector $(x,y) \in V$ such that $\|x\|_X=1$ and $\|y\|_Y<\delta$.
	\end{claim}
	\begin{proof}[Proof of the Claim]
		By the initial assumption on $P_Y(Z) \subset Y$ we just have two possibilities: the subspace $P_Y(V)$ is closed (and hence finite-dimensional) or $P_Y(V)$ is not closed. In both cases $P_Y\res_V : V \longrightarrow \cl{P_Y(V)}$ is not a Banach isomorphism so it is not bounded from below and considering the norm $\|(x,y)\|:=\max\{\|x\|_X,\|y\|_Y\}$ in the space $X\oplus Y$ we get the {\em Claim}.
	\end{proof}
	
	Now one can modify the Mazur's theorem (see \cite[Vol I, Theorem~1.a.5 and Lemma~1.a.6]{LindTzaf1977}) and easily construct a basic sequence $(x_n,y_n)_{n\in\NN} \subset Z$ with the properties:
	\begin{enumerate}[(a)]
		\item $\|x_n\|_X=1$ and $\|y_n\|_Y<\frac{1}{2^n}$ for every $n \in \NN$ (by using the {\em Claim});
		
		\item $(x_n)_{n\in\NN}$ is a basic sequence for $(X,\|\cdot\|_X)$ (adding to each step of \cite[Vol I, Lemma~1.a.6]{LindTzaf1977} the corresponding functionals of the type $(x^*,0) \in X^*\oplus Y^* = (X\oplus Y)^*$).
	\end{enumerate}
	We claim that $P_X(Z)$ contains the subspace $\cl{\lspan\{x_n : n \in \NN\}}$: given any convergent series $x = \sum_{n\in\NN} a_n x_n$ we have that $a=(a_n)_{n\in\NN} \in c_0(\NN)$ since $(x_n)_{n\in\NN}$ is a normalized sequence, and hence $\sum_{n\in\NN} a_n y_n$ is an absolutely convergent series to some vector $y \in Y$, i.e.\
	\[
	y = \sum_{n\in\NN} a_n y_n \in Y, \quad \text{ so } \quad (x,y) = \sum_{n\in\NN} a_n (x_n,y_n) \in Z,
	\]
	and finally $x = P_X(x,y) \in P_X(Z)$.
\end{proof}

\begin{remark}[\textbf{W. B. Johnson's Proof}]\label{Rem:Bill}
	Lemma~\ref{Lem:Bill} admits an equivalent shorter proof in terms of {\em strictly singular operators}: if both $P_X(Z)$ and $P_Y(Z)$ do not contain any infinite-dimensional closed subspace, then the operators $P_X:Z\longrightarrow X\oplus Y$ and $P_Y:Z\longrightarrow X\oplus Y$ are strictly singular. By \cite[Vol I, Theorem 2.c.5]{LindTzaf1977} we would have that $P_X+P_Y = I : Z \longrightarrow Z$, which is an isomorphism, has to be a strictly singular operator, contradiction.
\end{remark}

\begin{remark}\label{Rem:projection}
	From Lemma~\ref{Lem:Bill} one can easily deduce that: {\em given a real Banach space $X$ and $T \in \Lc(X)$, if its complexification $\til{T}$ admits an infinite-dimensional closed subspace $Z \subset \til{X}$ with some dynamical property among those described in Theorems~\ref{The:GonLeMon} or \ref{The:Banach-complex}, then $P(Z)$ as defined in \eqref{eq:projection}, admits an infinite-dimensional closed subspace with the same property}.
\end{remark}

\subsection{Divergent orbits for real-linear operators}

Once we know how to pass from a ``recurrent or hypercyclic subspace for $\til{T}$'' to one for $T$, we need to study the converse implication. We do it by proving a real version of Proposition~\ref{Pro:divergent}:

\begin{proposition}\label{Pro:real.divergent}
	Let $X$ be a real (and not necessarily separable) Banach space and let $T \in \Lc(X)$. If $\sigma_e(\til{T}) \cap \cl{\DD} = \varnothing$, then every infinite-dimensional closed subspace $E \subset X$ admits a vector $x \in E$ such that $\lim_{n\to\infty} \|T^nx\| = \infty$.
\end{proposition}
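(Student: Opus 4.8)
The plan is to transfer the problem to the complex Banach space $\til X$ via the complexification $\til T:\til X\longrightarrow\til X$. One cannot merely quote Proposition~\ref{Pro:divergent}: if $z=x+iy\in\til X$ satisfies $\|\til T^nz\|_c\to\infty$, the definition of $\|\cdot\|_c$ only yields $\max\{\|T^nx\|,\|T^ny\|\}\to\infty$, which need not force either $(\|T^nx\|)_{n}$ or $(\|T^ny\|)_{n}$ to diverge. So I would instead re-run, for $\til T$, the three-step argument spelled out in Remark~\ref{Rem:non-separability.2}, keeping the vector produced inside the real part $X\subset\til X$.

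First I would replace $E$ by an infinite-dimensional \emph{separable} closed subspace $E_0\subset E$ (the closed span of a countable linearly independent subset of $E$); it suffices to find $x\in E_0$ with $\|T^nx\|\to\infty$. Set $Z:=E_0+iE_0\subset\til X$; via the $\RR$-isomorphism $J$ this corresponds to $E_0\oplus E_0\subset X\oplus X$, so $Z$ is a separable, infinite-dimensional, closed \emph{complex} subspace of $\til X$. Applying the first two steps of Remark~\ref{Rem:non-separability.2} to $\til T$ and $Z$: by \cite[Lemma 8.15 (b)]{BaMa2009}, using $\sigma_e(\til T)\cap\cl{\DD}=\varnothing$, there are $\lambda>1$, $n_0\in\NN$ and a countable family $\Kc_0\subset\Lc(\til X)$ of compact operators; by \cite[Lemma 8.17]{BaMa2009} there are a separable complex Banach space $\hat Y$ and a countable family $\Kc_1\subset\Lc(Z,\hat Y)$ of compact operators.

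Next I would apply \cite[Corollary 8.14]{BaMa2009} to the real Banach space $E_0$ and the countable family of compact operators obtained by restricting each member of $\Kc_0\cup\Kc_1$ to $E_0$ (restrictions of compact operators are compact), getting a normalized basic sequence $(e_j)_{j\in\NN}\subset E_0$ that is both $\Kc_0$-null and $\Kc_1$-null. Then \cite[Lemma 8.15 (b)]{BaMa2009} gives $\liminf_{j\to\infty}\|\til T^ne_j\|_c\geq\lambda^n$ for $n\geq n_0$, and \cite[Lemma 8.17]{BaMa2009}, applied with a summable sequence of positive reals $(\alpha_n)_{n\in\NN}$ such that $\alpha_n\lambda^n\to\infty$ (e.g.\ $\alpha_n=\lambda^{-n/2}$), yields a vector $x\in\cl{\lspan_{\RR}\{e_j:j\in\NN\}}$ with $\|\til T^nx\|_c\geq\alpha_n\lambda^n$ for all $n\geq n_0$, hence $\|\til T^nx\|_c\to\infty$. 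Since $x$ lies in the closure of the \emph{real-linear} span of the $e_j$, we have $x\in E_0\subset X$, and because $\|w\|_c=\|w\|$ and $\til T^nw=T^nw$ for every $w\in X$ we conclude $\|T^nx\|=\|\til T^nx\|_c\to\infty$ with $x\in E_0\subset E$.

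The main obstacle is the one flagged at the start: the real structure must be preserved under complexification, so the proof has to run through the internal mechanism of Proposition~\ref{Pro:divergent} as described in Remark~\ref{Rem:non-separability.2} rather than through its statement; one also has to note that \cite[Corollary 8.14]{BaMa2009} holds verbatim for real spaces (its proof is a Mazur-type gliding-hump construction). With those ingredients, the remaining bookkeeping with $E_0$, $Z=E_0+iE_0$, and the restricted compact families is routine.
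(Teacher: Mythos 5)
Your proof is correct and follows essentially the same route as the paper's: pass to a separable infinite-dimensional closed subspace, complexify, and re-run the three-step mechanism of Remark~\ref{Rem:non-separability.2} for $\til{T}$ while keeping the normalized basic sequence inside the real subspace, so that the divergent vector supplied by \cite[Lemma~8.17]{BaMa2009} lies in $\cl{\lspan_{\RR}\{e_j : j\in\NN\}} \subset E \subset X$ and satisfies $\|T^nx\|=\|\til{T}^nx\|_c\to\infty$. The only difference is cosmetic: where you restrict the compact operators of $\Kc_0\cup\Kc_1$ to the real separable subspace and invoke the real-scalar version of \cite[Corollary~8.14]{BaMa2009}, the paper packages the same step as Lemma~\ref{Lem:real.8.14} (a complexified Corollary~8.14, proved by splitting the adjoint functionals into real and imaginary parts), and both devices produce the same $\Kc$-null real normalized basic sequence.
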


In order to prove Proposition~\ref{Pro:real.divergent}, we will rewrite the proof of Proposition~\ref{Pro:divergent}, but using the following real version of \cite[Corollary~8.14]{BaMa2009}:

\begin{lemma}\label{Lem:real.8.14}
	Let $X$ be a real Banach space, and let $E$ be an infinite-dimensional closed subspace of $X$. If $\Kc = (K_n : \til{E} \longrightarrow Y_n)_{n\in\NN}$ is a countable family of complex-linear compact operators, where each $Y_n$ is a complex Banach space, then there exist a normalized basic sequence $(e_j)_{j\in\NN} \subset E$ such that $(\til{e_j})_{j\in\NN} = (e_j+i0)_{j\in\NN} \subset \til{E}$ is a $\Kc$-null sequence.
\end{lemma}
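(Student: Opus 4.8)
The plan is to transport the problem from the complexification $\til{E}$, which is a complex Banach space, back to the real Banach space $E$ via the canonical isometric embedding, and then to invoke the (scalar-field--independent) argument behind \cite[Corollary~8.14]{BaMa2009}. Let $\iota:E\longrightarrow\til{E}$ be the map $\iota(e)=e+i0$. From the definition of $\|\cdot\|_c$ one checks that $\|e+i0\|_c=\sup_{t\in[0,2\pi]}|\cos(t)|\cdot\|e\|=\|e\|$, so $\iota$ is a real-linear isometric embedding of $(E,\|\cdot\|)$ onto the closed real subspace $E+i0\subset\til{E}$; in particular a sequence $(e_j)_{j\in\NN}\subset E$ is a normalized basic sequence in $E$ if and only if $(\til{e_j})_{j\in\NN}$ is a normalized basic sequence in $\til{E}$.

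For each $n\in\NN$, set $L_n:=K_n\circ\iota:E\longrightarrow Y_n$, where $Y_n$ is now regarded as a real Banach space. Since $\iota$ is bounded and $K_n$ is compact, each $L_n$ is a real-linear compact operator, and $K_n\til{e}=L_n e$ for every $e\in E$. Hence it suffices to produce a normalized basic sequence $(e_j)_{j\in\NN}\subset E$ with $\lim_{j\to\infty}L_n e_j=0$ for every $n\in\NN$: then $(\til{e_j})_{j\in\NN}$ is automatically $\Kc$-null.

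This is precisely \cite[Corollary~8.14]{BaMa2009} with $E$ in place of the complex space, and I would verify that its proof uses nothing about complex scalars. Concretely, one builds $(e_j)_{j\in\NN}$ by induction, keeping $e_1,\dots,e_{j-1}$ basic with basis constant at most $2$ together with a finite-codimensional (hence infinite-dimensional) closed subspace $W_{j-1}\subset E$ such that any normalized vector of $W_{j-1}$ appended to $e_1,\dots,e_{j-1}$ preserves the basic sequence property (Mazur's construction, \cite[Vol~I, Lemma~1.a.6]{LindTzaf1977} or \cite[Lemma~C.1.1]{BaMa2009}, valid over $\RR$). Applying to the compact operator $(L_1,\dots,L_j)\res_{W_{j-1}}:W_{j-1}\longrightarrow Y_1\oplus\cdots\oplus Y_j$ (a restriction of a finite direct sum of compact operators, hence compact) the elementary fact that a compact operator cannot be bounded below on an infinite-dimensional closed subspace, one obtains a unit vector $e_j\in W_{j-1}$ with $\|L_n e_j\|<1/j$ for $1\le n\le j$. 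Letting $j\to\infty$, for each fixed $n$ we have $\|L_n e_j\|<1/j\to 0$, which finishes the argument.

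As for difficulty: there is no genuine obstacle here. The one point to be careful about is that one cannot simply apply the complex \cite[Corollary~8.14]{BaMa2009} directly to $\til{E}$, since the basic sequence it produces would lie in $\til{E}$ rather than inside the real part $E+i0$; this is exactly why one must pass through $\iota$ and re-run the construction inside $E$. The verifications that $\iota$ is isometric, that the operators $L_n$ are compact and real-linear, and that both Mazur's construction and the ``not bounded below'' fact are insensitive to the scalar field are all routine.
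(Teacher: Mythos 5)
Your proof is correct, but it takes a genuinely different route from the paper's. The paper follows \cite[Lemma~8.13 and Corollary~8.14]{BaMa2009} closely: it uses compactness of the adjoints $K_n^*$ to cover each $K_n^*(B_{Y_n^*})$ by finitely many balls, invokes the identification $(\til{E})^*\cong\til{(E^*)}$ to split the centres $z_k^*$ into real functionals $x_k^*,y_k^*\in E^*$, and intersects the kernels $\Ker(x_k^*)\cap\Ker(y_k^*)$ to build a decreasing sequence $(E_j)_{j\in\NN}$ of finite-codimensional closed subspaces of $E$ with $\|K_n\res_{\til{E_j}}\|\leq 2^{-j}$ for $n\leq j$; Mazur's theorem is then applied once, at the end, to pick $e_j\in E_j$. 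You instead pull everything back to the real space through the isometric embedding $\iota(e)=e+i0$, so that only the real-linear compact operators $L_n=K_n\circ\iota$ matter, and you interleave Mazur's construction with the elementary fact that a compact operator is never bounded below on an infinite-dimensional closed subspace, choosing each unit vector $e_j$ in the Mazur subspace $W_{j-1}$ with $\|L_ne_j\|<1/j$ for $n\leq j$. Your key step is sound (the infimum of $\|(L_1,\dots,L_j)w\|$ over unit vectors of $W_{j-1}$ is indeed zero), and the pointwise conclusion $K_n\til{e_j}\to 0$ is exactly what the statement demands. What the paper's version buys is a stronger, reusable intermediate fact --- a real analogue of \cite[Lemma~8.13]{BaMa2009}, i.e.\ uniform norm control of $K_n$ on whole complexified finite-codimensional subspaces $\til{E_j}$ --- at the price of using the duality identification of the complexification; your version is more elementary and self-contained (no adjoints, no complexified duals), and it yields only the $\Kc$-nullity of the particular sequence, which is all the lemma asserts and all that the proof of Proposition~\ref{Pro:real.divergent} requires.
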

This is just the {\em complexification} version of the well-known (real and complex) Banach spaces results \cite[Lemma~8.13 and Corollary~8.14]{BaMa2009}:
\begin{proof}[Proof of Lemma~\ref{Lem:real.8.14}]
	We claim that there exists a decreasing sequence $(E_j)_{j\in\NN}$ formed by finite-codimensional closed subspaces of $E$ such that $\|K_n\res_{\til{E_j}}\| \leq \frac{1}{2^j}$ whenever $n\leq j$.\\[-5pt]
	
	Since $K_1$ is compact, the adjoint operator $K_1^*$ is also compact, so one can find a finite number of functionals $z_1^*, ... , z_N^* \in \til{E}^*$ such that
	\[
	K_1^*(B_{Y_1^*}) \subset \bigcup_{1\leq k\leq N} B(z_k^* , \tfrac{1}{2}),
	\]
	where $B_{Y_1^*}$ is the closed unit ball of $Y_1^*$. Then we have that
	\begin{equation}\label{eq:normaK}
		\|K_1(z)\| = \sup\{ |z^*(z)| : z^* \in K_1^*(B_{Y_1^*}) \} \leq \max_{1\leq k\leq N} |z_k^*(z)| + \tfrac{1}{2}\|z\| \quad \text{ for all } z \in \til{E}.
	\end{equation}
	By \cite[Section~4.10]{MoMuPeSe2022} we can identify $(\til{E})^*$ with $\til{(E^*)}$, and in particular, for each $z_k^*$ there are $x_k^*,y_k^* \in E^*$ such that
	\[
	z_k^*(x+iy) = \left[ x_k^*(x) - y_k^*(y) \right] + i \left[ y_k^*(x) + x_k^*(y) \right],
	\]
	for every $x+iy \in \til{E}$ and $1\leq k\leq N$. Hence we have the inclusion
	\[
	\til{\Ker(x_k^*) \cap \Ker(y_k^*)} \subset \Ker(z_k^*) \subset \til{E} \quad \text{ for every } 1\leq k\leq N.
	\]
	Therefore, by \eqref{eq:normaK}, the finite-codimensional closed subspace
	\[
	E_1 := E \quad \cap \bigcap_{1\leq k\leq N} \Ker(x_k^*) \cap \Ker(y_k^*) \subset E,
	\]
	has the property $\|K_1\res_{\til{E_1}}\| < \frac{1}{2}$.\\[-5pt]
	
	Recursively, if we have $E_1,...,E_j$ already constructed we obtain $E_{j+1}$ in the same way: consider a finite covering of $\bigcup_{n=1}^{j+1} K_n^*(B_{Y_n^*})$ with balls of diameter lower than $\frac{1}{2^{j+1}}$ and intersect $E_j$ with the kernels of the corresponding finite sequence of functionals.\\[-5pt]
	
	Lastly, the Mazur's theorem (see \cite[Vol I, page 4]{LindTzaf1977} or \cite[Lemma C.1.1]{BaMa2009}) provides a normalized basic sequence $(e_j)_{j\in\NN} \subset E$ such
	that $e_j \in E_j$ for all $j \in \NN$. This sequence $(e_j)_{j\in\NN}$ has the required properties.
\end{proof}

\begin{proof}[Proof of Proposition~\ref{Pro:real.divergent}]
	Assume that $E$ is separable. Let $\Kc_0 \subset \Lc(\til{X})$ and $\Kc_1 \subset \Lc(\til{E},\hat{Y})$ be the countable families of compact operators obtained by \cite[Lemma~8.15~(b)]{BaMa2009} and \cite[Lemma~8.17]{BaMa2009} respectively. Apply Lemma~\ref{Lem:real.8.14} to the countable family of compact operators $\Kc = \Kc_0 \cup \Kc_1$ to obtain a normalized basic sequence $(e_j)_{j\in\NN} \subset E$ such that $(\til{e_j})_{j\in\NN} = (e_j+i0)_{j\in\NN} \subset \til{E}$ is a $\Kc$-null sequence. Choose a summable sequence $(\alpha_n)_{n\in\NN}$ for which $\alpha_n\lambda^n \to \infty$ and apply \cite[Lemma~8.17]{BaMa2009} to $(\til{e_j})_{j\in\NN}$ to get a vector
	\[
	z = x+iy \in \cl{\lspan_{\RR}\{\til{e_j} : j \in \NN\}} \quad \text{ for which } \quad \lim_{n\to\infty} \|\til{T}^nz\| = \infty.
	\]
	Note that $x \in E$, $y=0$ and hence
	\[
	\lim_{n\to\infty} \|T^nx\| = \lim_{n\to\infty} \|\til{T}^nz\| = \infty.\qedhere
	\]
\end{proof}

We are now ready to prove Theorem~\ref{The:Banach-real}.

\subsection{Proof of Theorem~\ref{The:Banach-real}}

We show (a) and (b) at the same time following the proofs of Theorems~\ref{The:GonLeMon} and \ref{The:Banach-complex}: firstly, we observe that (ii) $\Rightarrow$ (iii) by the Banach-Steinhaus theorem.\\[-5pt]

To see (i) $\Rightarrow$ (iv) and (iii) $\Rightarrow$ (iv) note that if (i) or (iii) hold but $\sigma_e(\til{T}) \cap \cl{\DD} = \varnothing$ we arrive to a contradiction: the vector with divergent orbit obtained by Proposition~\ref{Pro:real.divergent} cannot be in a recurrent subspace, neither in the subspace described by statement (iii).\\[-5pt]

Finally, since $T$ is weakly-mixing or quasi-rigid if and only if so is $\til{T}$, we have that statement (iv) implies (i), (ii) and (iii) for the complexification operator $\til{T}$ by Theorems~\ref{The:GonLeMon} and \ref{The:Banach-complex}. By Lemma~\ref{Lem:Bill} we deduce (i), (ii) and (iii) for $T$ itself (see Remark~\ref{Rem:projection}).\QEDh

\subsection{Comments on Theorem~\ref{The:Banach-real}}

\begin{remark}[\textbf{Non-Separability}]\label{Rem:non-separability.3}
	As in Theorems~\ref{The:Banach-sufficient} and \ref{The:Banach-complex}, the quasi-rigid part of Theorem~\ref{The:Banach-real} is still true for non-separable Banach spaces.
\end{remark}

\begin{remark}[\textbf{Operators without common hypercyclic subspaces}]
	It was first shown in \cite[Example~2.1]{ABLP2005} that there are two operators admitting hypercyclic subspaces which do not have a common hypercyclic subspace. The example was extended in \cite[Exercise~8.2]{BaMa2009}. In particular, given any $\lambda \in \CC\setminus\cl{\DD}$:
	\begin{enumerate}[--]
		\item {\em let $T \in \Lc(\ell^2(\NN))$ be a weakly-mixing operator, on the complex space $\ell^2(\NN)$, with a hypercyclic subspace. Then
			\[
			T_1:=T\oplus \lambda B : \ell^2(\NN)\oplus \ell^2(\NN) \longrightarrow \ell^2(\NN)\oplus \ell^2(\NN),
			\]
			and
			\[
			T_2:= \lambda B \oplus T : \ell^2(\NN)\oplus \ell^2(\NN) \longrightarrow \ell^2(\NN)\oplus \ell^2(\NN),
			\]
			are weakly-mixing operators with a hypercyclic subspace, but they do not have any common hypercyclic subspace (where $B:\ell^2(\NN)\longrightarrow\ell^2(\NN)$ is the backward shift)}.
	\end{enumerate}
	The proof is based on the following four facts:
	\begin{enumerate}[1)]
		\item $T_1$ and $T_2$ are weakly-mixing because $\lambda B$ is mixing (it is the Rolewicz operator);
		
		\item $\sigma_e(T_1) \cap \cl{\DD} \neq \varnothing$ and $\sigma_e(T_2) \cap \cl{\DD} \neq \varnothing$ because $\sigma_e(T) \cap \cl{\DD} \neq \varnothing$;
		
		\item for a Hilbert space $H$ there is a simple proof, using orthogonality, of the fact: {\em for any infinite-dimensional closed subspace $Z \subset H \oplus H$, at least one of the projections $P_1(Z)$ or $P_2(Z)$ admits an infinite-dimensional closed subspace} (see~\cite[Example~2.1]{ABLP2005});
		
		\item the Rolewicz operator $\lambda B$ has no hypercyclic subspace (see Remark~\ref{Rem:local.q-r.2}).
	\end{enumerate}

	Theorem~\ref{The:Banach-real} and Lemma~\ref{Lem:Bill} allow us to generalize these examples, and in particular, for any (real or complex) number $\lambda$ with $|\lambda|>1$:
	\begin{enumerate}[--]
		\item {\em let $T \in \Lc(X)$ be a weakly-mixing operator with a hypercyclic subspace on a (real or complex) Banach space $X$. Let $Y$ be the (real or complex) $\ell^p(\NN)$ ($1\leq p<\infty$) or $c_0(\NN)$ space and denote by $B:Y\longrightarrow Y$ the backward shift on $Y$. Then
			\[
			T_1:=T\oplus \lambda B : X\oplus Y \longrightarrow X\oplus Y \quad \text{ and } \quad T_2:= \lambda B \oplus T : Y\oplus X \longrightarrow Y\oplus X,
			\]
			are weakly-mixing operators with a hypercyclic subspace, but they do not have any common hypercyclic subspace}.
	\end{enumerate}
\end{remark}

\section{Further results, applications and open problems}\label{Sec:7applications}

One of the objectives of the {\em hypercyclic spaceability theory} is to establish sufficient conditions for an operator to admit a hypercyclic subspace. This is the case of Theorem~\ref{The:LeMon} which can be easily reproved with the theory developed here: for any operator $S = T-K \in \Lc(X)$ with $\|S\|\leq 1$ we have that its essential spectrum (which is a subset of the spectrum of $S$) is included in the closed unit disk, so any weakly-mixing compact perturbation $T=S+K$ of $S$ admits a hypercyclic subspace (use Theorem~\ref{The:Banach-real} for the real case). We can also recover the following well-known result (see \cite[Corollary~10.11]{GrPe2011}):

\begin{corollary}
	Let $X$ be a complex separable Banach space and let $T \in \Lc(X)$ be a weakly-mixing (resp.\ quasi-rigid) operator. If $\Ker(T-\lambda)$ is infinite-dimensional for some $\lambda \in \cl{\DD}$, then $T$ has a hypercyclic (resp.\ recurrent) subspace.
\end{corollary}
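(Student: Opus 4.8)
The plan is to reduce this corollary directly to Theorem~\ref{The:Banach-complex} (or Theorem~\ref{The:Banach-real} in the real case) by verifying its spectral hypothesis (iv). The key observation is that if $\Ker(T-\lambda)$ is infinite-dimensional for some $\lambda \in \cl{\DD}$, then $T-\lambda$ cannot be a left-Fredholm operator, since left-Fredholmness requires $\dim\Ker(T-\lambda)<\infty$. Hence $\lambda \in \sigma_{\ell e}(T) \subset \sigma_e(T)$, and therefore $\sigma_e(T) \cap \cl{\DD} \neq \varnothing$.

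With that spectral fact in hand, the argument splits according to the dynamical hypothesis. If $T$ is weakly-mixing, then condition (iv) of Theorem~\ref{The:GonLeMon} is satisfied, so $T$ has a hypercyclic subspace. If $T$ is quasi-rigid, then condition (iv) of Theorem~\ref{The:Banach-complex} is satisfied, so $T$ has a recurrent subspace. (For the real version one appeals instead to Theorem~\ref{The:Banach-real}, noting that $\lambda \in \sigma_e(T)$ implies $\lambda \in \sigma_e(\til{T})$ since the spectrum of the complexification contains the spectrum of the original operator.)

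I do not expect any genuine obstacle here: the corollary is essentially a clean restatement of the already-proven characterizations, and the only content is the elementary remark that an infinite-dimensional kernel obstructs (left-)Fredholmness and thus forces membership in the essential spectrum. The mild subtlety worth a sentence is that one must land $\lambda$ in $\sigma_e(T)$ rather than merely in $\sigma(T)$; this is immediate from the failure of the $\dim\Ker<\infty$ requirement in the definition of a Fredholm (or left-Fredholm) operator. Everything else is a direct invocation of the earlier theorems.
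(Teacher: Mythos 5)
Your proposal is correct and follows essentially the same route as the paper: the paper's one-line proof simply notes that $\lambda \in \sigma_e(T) \cap \cl{\DD}$ (since an infinite-dimensional kernel rules out Fredholmness) and then invokes the characterization theorems, exactly as you do. Your extra care in citing Theorem~\ref{The:GonLeMon} for the weakly-mixing case and Theorem~\ref{The:Banach-complex} for the quasi-rigid case is sound; the remarks about the real case are harmless but unnecessary, since the statement is for complex spaces.
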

\begin{proof}
	In that case $\lambda \in \sigma_e(T) \cap \cl{\DD}$ and Theorem~\ref{The:Banach-complex} applies.
\end{proof}

Nonetheless, Theorem~\ref{The:LeMon} and the previous corollary are somehow restrictive: the first needs that the perturbed operator has norm bounded by $1$, and the second requires the existence of plenty of eigenvectors associated to the same eigenvalue. We propose an alternative ``sufficient condition result'' in order to obtain the existence of hypercyclic subspaces:

\begin{corollary}\label{Cor:C-t.sufficient}
	Let $X$ be a (real or complex) Banach space and let $T,S,K \in \Lc(X)$ where the operator $T$ is weakly-mixing, $S$ is quasi-rigid, $K$ is compact and $T=S+K$. Then $T$ has a hypercyclic subspace if and only if $S$ has a recurrent subspace.
\end{corollary}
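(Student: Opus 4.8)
The plan is to reduce everything to the complex-spectral characterizations already established, via the identities linking essential spectra under compact perturbation. First I would observe that the key object is the essential spectrum: since $K$ is compact and $T=S+K$, the operators $T$ and $S$ have the same essential spectrum, and for real-linear operators the same applies to the complexifications, namely $\sigma_e(\til T)=\sigma_e(\til S)$ because $\til K=\widetilde{S+K}-\til S$ is a (complex-linear) compact operator on $\til X$. Thus $\sigma_e(\til T)\cap\cl\DD\neq\varnothing$ if and only if $\sigma_e(\til S)\cap\cl\DD\neq\varnothing$ (in the complex case one may drop the tildes throughout and separability is not needed on the quasi-rigid side by Remark~\ref{Rem:non-separability.2}).

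Next I would invoke the two characterization theorems. Since $T$ is weakly-mixing, Theorem~\ref{The:Banach-real}(a) (or Theorem~\ref{The:GonLeMon} in the complex case) says that $T$ has a hypercyclic subspace if and only if $\sigma_e(\til T)\cap\cl\DD\neq\varnothing$. Since $S$ is quasi-rigid, Theorem~\ref{The:Banach-real}(b) (or Theorem~\ref{The:Banach-complex} in the complex case) says that $S$ has a recurrent subspace if and only if $\sigma_e(\til S)\cap\cl\DD\neq\varnothing$. Chaining these equivalences through the equality $\sigma_e(\til T)=\sigma_e(\til S)$ established in the previous step yields exactly the claimed biconditional: $T$ has a hypercyclic subspace $\iff \sigma_e(\til T)\cap\cl\DD\neq\varnothing \iff \sigma_e(\til S)\cap\cl\DD\neq\varnothing \iff S$ has a recurrent subspace.

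I expect the main (and only real) obstacle to be bookkeeping about hypotheses: Theorem~\ref{The:Banach-real} is stated for separable Banach spaces, whereas Corollary~\ref{Cor:C-t.sufficient} allows arbitrary Banach spaces. On the quasi-rigid/recurrent side this is harmless by Remark~\ref{Rem:non-separability.3}, which extends Theorem~\ref{The:Banach-real}(b) to the non-separable case. On the weakly-mixing/hypercyclic side one should note that a weakly-mixing operator forces the underlying space to be separable (it has a dense orbit after all), so no genuine generality is lost and Theorem~\ref{The:Banach-real}(a) applies as stated; this is the point worth spelling out carefully. One should also double-check the stability $\sigma_e(\til T)=\sigma_e(\til S)$: this is immediate from the standard fact that the essential spectrum is invariant under compact perturbations (it depends only on the image in the Calkin algebra), together with the fact that complexification sends compact operators to compact operators. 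With these routine verifications in place, the corollary follows formally, and one may additionally remark that the same argument proves the $T$-part for the real case by using Theorem~\ref{The:Banach-real}(a) and the complexification stability, mirroring the reproof of Theorem~\ref{The:LeMon} given just above.
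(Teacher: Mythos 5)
Your argument is exactly the paper's: compact perturbations preserve the essential spectrum (also after complexification), and then Theorems~\ref{The:GonLeMon}/\ref{The:Banach-complex} (or Theorem~\ref{The:Banach-real} in the real case) convert $\sigma_e\cap\cl{\DD}\neq\varnothing$ into the two subspace statements. Your extra bookkeeping on separability (weak mixing forces $X$ separable; the quasi-rigid side is covered by the non-separability remarks) is correct and only makes explicit what the paper leaves implicit.
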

\begin{proof}
	For the complex case note that $\sigma_e(T)=\sigma_e(S)$ since every compact perturbation preserves the essential spectrum, so Theorems~\ref{The:GonLeMon} and \ref{The:Banach-complex} yield the result. Use Theorem~\ref{The:Banach-real} for the real case of the result.
\end{proof}

The idea after Corollary~\ref{Cor:C-t.sufficient} is applying it to the {\em C-type operators}.

\subsection{Application to C-type operators}

In the last years lots of dynamical properties have been distinguished in Linear Dynamics by the so-called {\em C-type operators}. Using them one can construct operators which are {\em chaotic} but not {\em $\Uc$-frequently hypercyclic}, see \cite{Menet2017}; {\em chaotic} and {\em frequently hypercyclic} but that do not admit an {\em ergodic probability measure with full support}; {\em chaotic} and {\em $\Uc$-frequently hypercyclic} but not {\em frequently hypercyclic}; {\em chaotic} and {\em mixing} but not {\em $\Uc$-frequently hypercyclic}, see \cite{GriMaMe2021}; {\em invertible} and {\em frequently hypercyclic} (resp.\ {\em $\Uc$-frequently hypercyclic}) but whose inverses are not {\em frequently hypercyclic} (resp.\ {\em $\Uc$-frequently hypercyclic}), see \cite{Menet2019U-freq,Menet2019freq}.\\[-5pt]

In this section we show that every {\em C-type} operator as defined in \cite{Menet2017,GriMaMe2021} admits a hypercyclic subspace. We do it in two different ways:
\begin{enumerate}[--]
	\item first by using the essential spectrum and applying Theorem~\ref{The:Banach-complex};
	
	\item and secondly by constructing explicitly a sequence of subspaces to apply Theorem~\ref{The:Banach-sufficient}.
\end{enumerate}
However, in both cases we use Corollary~\ref{Cor:C-t.sufficient} to simplify the problem.\\[-5pt]

As defined in \cite{GriMaMe2021}, the {\em C-type operators} are chaotic (and hence weakly-mixing) compact perturbations of operators with a dense set of periodic points (and hence quasi-rigid). More precisely, each {\em C-type operator} is associated to four parameters $v$, $w$, $\varphi$ and $b$, where:
\begin{enumerate}[--]	
	\item $w = (w_j)_{j\in\NN}$ is a sequence of complex numbers which is both bounded and bounded from below, i.e.\ $0 < \inf_{j\in\NN} |w_j| \leq \sup_{j\in\NN} |w_j| <\infty$;
	
	\item $\varphi:\NN_0\longrightarrow\NN_0$ is a map such that $\varphi(0) = 0$, $\varphi(n) < n$ for every $n \in \NN_0$, and the set $\varphi^{-1}(l) = \{ n \in \NN_0 : \varphi(n) = l \}$ is infinite for every $l \in \NN_0$;
	
	\item $b = (b_n)_{n\in\NN_0}$ is a strictly increasing sequence of positive integers such that $b_0 = 0$ and $b_{n+1}-b_n$ is a multiple of $2(b_{\varphi(n)+1}-b_{\varphi(n)})$ for every $n \in \NN$;
	
	\item $v = (v_n)_{n\in\NN}$ is a sequence of non-zero complex numbers such that $\sum_{n\in\NN} |v_n| < \infty$.
\end{enumerate}

\begin{definition}[\cite{GriMaMe2021}]
	Let $(e_k)_{k\in\NN_0}$ be the canonical basis of the linear space
	\[
	c_{00}(\NN_0) := \left\{ (x_j)_{j\in\NN_0} \in \CC^{\NN_0} : \text{ there exists } j_0 \in \NN \text{ with } x_j=0 \text{ for all } j\geq j_0 \right\},
	\]
	i.e.\ $e_k=(\delta_{k,j})_{j\in\NN_0}$. The linear map $T_{w,b}$ on $c_{00}(\NN_0)$ associated to the data $w$ and $b$ is defined by:
	\[
	T_{w,b} \ e_k := \begin{cases}
		w_{k+1}e_{k+1}, & \text{ if } k \in [b_n,b_{n+1}-1[, \\ \\
		- \left( \prod_{j=b_n+1}^{b_{n+1}-1} w_j \right)^{-1} e_{b_n}, & \text{ if } k = b_{n+1}-1, \text{ for } n\geq 0.
	\end{cases}
	\]
	Moreover, the linear map $T_{w,\varphi,b,v}$ on $c_{00}(\NN_0)$ associated to the data $v, w, \varphi$ and $b$ given as above is defined by:
	\[
	T_{w,\varphi,b,v} \ e_k := \begin{cases}
		w_{k+1}e_{k+1}, & \text{ if } k \in [b_n,b_{n+1}-1[, n\geq 0, \\ \\
		v_ne_{b_{\varphi(n)}} - \left( \prod_{j=b_n+1}^{b_{n+1}-1} w_j \right)^{-1} e_{b_n}, & \text{ if } k = b_{n+1}-1, n\geq 1, \\ \\
		\left( \prod_{j=b_0+1}^{b_{1}-1} w_j \right)^{-1} e_0, & \text{ if } k=b_1-1.
	\end{cases}
	\]
	When they extend continuously to an $\ell^p(\NN_0)$ space (with $1\leq p<\infty$) the resulting continuous operators are still denoted by $T_{w,b}$ and $T_{w,\varphi,b,v}$ respectively, and this last operator $T_{w,\varphi,b,v}$ is called the {\em operator of C-type} on $\ell^p(\NN_0)$ associated to the data $w, \varphi, b$ and $v$.
\end{definition}

As it is shown in \cite[Lemma~6.2 and Proposition~6.5]{GriMaMe2021}: we have that $T_{w,\varphi,b,v} = T_{w,b} + K_{\varphi,v}$ where
\[
K_{\varphi,v} \ x := \sum_{n\in\NN} v_n x_{b_{n+1}-1} \cdot e_{b_{\varphi(n)}} \quad \text{ for each } x=(x_j)_{j\in\NN_0} \in \ell^p(\NN_0)
\]
is a compact operator. The {\em continuity} of $T_{w,b}$ (and hence that of $T_{w,\varphi,b,v}$) depends on the following condition
\begin{equation}\label{eq:C-type.cont}
	\inf_{n\in\NN_0} \prod_{j=b_n+1}^{b_{n+1}-1} |w_j| > 0,
\end{equation}
and the {\em chaotic} behaviour of $T_{w,\varphi,b,v}$ can be deduced whenever the following holds
\begin{equation}\label{eq:C-type.chaos}
	\limsup_{\underset{N \in \varphi^{-1}(n)}{N \to \infty}} |v_N| \prod_{j=b_N+1}^{b_{N+1}-1} |w_j| = \infty \quad \text{ for every } n\geq 0.
\end{equation}

Since $T_{w,b}$ has a dense set of periodic points it is quasi-rigid (see \cite[Proposition~2.9]{GriLoPe2022}), so that Corollary~\ref{Cor:C-t.sufficient} can be applied to every {\em C-type operator} $T_{w,\varphi,b,v}$ defined as above, i.e.\ studying if it admits a hypercyclic subspace is equivalent to study if $T_{w,b}$ admits a recurrent subspace. The rest of this subsection is devoted to prove the following statement:
\begin{enumerate}[--]
	\item {\em every C-type operator $T_{w,\varphi,b,v}$ satisfying \eqref{eq:C-type.chaos} has a hypercyclic subspace}.
\end{enumerate}
As we were advancing before, we will do it by showing that the respective operator $T_{w,b}$ always has a recurrent subspace, and we prove this in two different ways:\\

\textbf{First Option ($T_{w,b}$ has a recurrent subspace): via the essential spectrum.} In view of Theorem~\ref{The:Banach-complex} and Corollary~\ref{Cor:left.essential}, the operator $T_{w,b}$ has a recurrent subspace if and only if there exists some $\lambda \in \cl{\DD}$ fulfilling at least one of the following properties:
\begin{enumerate}[(a)]
	\item $\Ker(T_{w,b}-\lambda)$ is infinite-dimensional;
	
	\item $\Ran(T_{w,b}-\lambda)$ is not closed.
\end{enumerate}

With respect to (a), it can be checked that every element in the {\em point spectrum} of $T_{w,b}$ has to be an appropriated root of the unity, so in particular the following inclusions hold
\[
\sigma_p(T_{w,b}) \subset \TT \subset \cl{\DD}.
\]
Moreover, it is not hard to check that:
\begin{enumerate}[--]
	\item {\em the subspace $\Ker(T_{w,b}-\lambda)$ is infinite-dimensional for some $\lambda \in \TT$ \textbf{if and only if} there are infinitely many blocks $[b_n,b_{n+1}[$ for which $\lambda^{b_{n+1}-b_n}=-1$}.
\end{enumerate}
This is a problem for our proposes since the condition that $b_{n+1}-b_n$ has to be a multiple of $2(b_{\varphi(n)+1}-b_{\varphi(n)})$, which is an assumption on the parameter $b = (b_n)_{n\in\NN_0}$, usually fights against having infinitely many blocks with an appropriated length for $\lambda$: note that $\lambda^{b_{\varphi(n)+1}-b_{\varphi(n)}} = -1$ implies that $\lambda^{b_{n+1}-b_n} = 1$. Indeed, for the most interesting examples exhibited in \cite{Menet2017,GriMaMe2021}, there is no $\lambda \in \TT$ with $\Ker(T_{w,b}-\lambda)$ being infinite-dimensional.\\[-5pt]

Regarding property (b), we can show that $\Ran(T_{w,b})=\Ran(T_{w,b}-0)$ is not closed whenever the operator fulfills \eqref{eq:C-type.chaos}, and hence that $0 \in \sigma_e(T_{w,b}) \cap \cl{\DD}$. We first claim that $T_{w,b}$ is not invertible: otherwise $T_{w,b}^{-1}$ would act on $c_{00}(\NN_0)$ as:
\[
T_{w,b}^{-1} \ e_k := \begin{cases}
	\tfrac{1}{w_{k-1}}e_{k-1}, & \text{ if } k \in ]b_n,b_{n+1}-1], \\ \\
	- \left( \prod_{j=b_n+1}^{b_{n+1}-1} w_j \right) e_{b_{n+1}-1}, & \text{ if } k = b_n, \text{ for } n\geq 0.
\end{cases}
\]
However, since $T_{w,b}$ is assumed to fulfill \eqref{eq:C-type.chaos} and $(v_N)_{N\in\NN}$ is a summable sequence we deduce that
\begin{equation}\label{eq:T^{-1}.discontinuous}
	\sup_{n\in\NN_0} \prod_{j=b_n+1}^{b_{n+1}-1} |w_j| = \infty,
\end{equation}
which implies that $T_{w,b}^{-1} : c_{00}(\NN_0) \longrightarrow c_{00}(\NN_0)$ does not extend continuously to $\ell^p(\NN_0)$. Since $T_{w,b}$ is quasi-rigid it has dense range. Moreover, $T_{w,b}$ is one-to-one so the open mapping theorem implies that $T_{w,b}$ is not surjective (otherwise it would be invertible). We deduce that $\Ran(T_{w,b})$ is not closed, that $0 \in \sigma_e(T_{w,b}) \cap \cl{\DD}$ and that $T_{w,b}$ has a {\em recurrent subspace}. By Corollary~\ref{Cor:C-t.sufficient} the C-type operator $T_{w,\varphi,b,v}$ has a {\em hypercyclic subspace}.\newpage

The previous argument is a particular case of the following result, which was already used in \cite{LeMon2001} for the particular case of the {\em bilateral weighted shifts} (the spectrum of such operators is either an annulus or a disk, depending on whether $T$ is invertible or not):

\begin{corollary}\label{Cor:1-1.sufficient}
	Let $X$ be a (real or complex) separable Banach space and let $T \in \Lc(X)$ be a weakly-mixing (resp.\ quasi-rigid) operator. If $T$ is one-to-one but not invertible, then $T$ has a hypercyclic (resp.\ recurrent) subspace.
\end{corollary}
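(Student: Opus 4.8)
The plan is to reduce Corollary~\ref{Cor:1-1.sufficient} to the already-established machinery by showing that the hypothesis ``$T$ is one-to-one but not invertible'' forces $0$ to lie in the essential spectrum inside the closed unit disc. First I would observe that, by the open mapping theorem, a one-to-one operator that is not invertible must fail to be surjective: if it were both one-to-one and onto it would be invertible. So $\Ran(T) \subsetneq X$ is a proper subspace. Next I would use the dynamical hypothesis: a weakly-mixing operator is hypercyclic, and a quasi-rigid operator is recurrent (since quasi-rigidity gives a dense set of recurrent vectors); in either case $\Ran(T) = \Ran(T-0)$ is dense in $X$ (hypercyclicity of $T$ implies dense range for $T - \lambda$ for every $\lambda$, cf.\ \cite[Lemma~2.53]{GrPe2011}, and recurrence gives dense range of $T-\lambda$ at least for $\lambda=0 \notin \TT$ via \cite[Proposition~2.14]{CoMaPa2014}). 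Thus $\Ran(T)$ is dense but not all of $X$, hence \emph{not closed}, which means $T = T-0$ is not a Fredholm operator, i.e.\ $0 \in \sigma_e(T)$. Since $0 \in \cl{\DD}$ trivially, we conclude $\sigma_e(T) \cap \cl{\DD} \neq \varnothing$.

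With that spectral condition in hand, I would simply invoke the characterizations already proved: in the complex case, Theorem~\ref{The:Banach-complex} gives that a quasi-rigid $T$ with $\sigma_e(T) \cap \cl{\DD} \neq \varnothing$ has a recurrent subspace, and Theorem~\ref{The:GonLeMon} gives that a weakly-mixing $T$ with $\sigma_e(T) \cap \cl{\DD} \neq \varnothing$ has a hypercyclic subspace. For the real case one passes to the complexification $\til{T}$ and applies Theorem~\ref{The:Banach-real}; here one needs to know that $0 \in \sigma_e(\til{T})$, which follows because $\til{T}$ is conjugate to $T \oplus T$, so $\til{T}$ is one-to-one but not invertible whenever $T$ is, and the same open-mapping / dense-range argument applies to $\til{T}$ (note $\til{T}$ is weakly-mixing resp.\ quasi-rigid iff $T$ is, as already recorded in the excerpt).

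The main obstacle — and it is a mild one — is the density-of-range step for recurrent operators: recurrence only directly yields that $\Ran(T-\lambda)$ is dense for $\lambda \notin \TT$, but since we only need the case $\lambda = 0$ this is exactly covered. One should also be slightly careful that ``not invertible'' is being used in the strong sense (no bounded inverse, equivalently not bijective), so that the open mapping theorem genuinely forces non-surjectivity from injectivity; this is the standard reading and causes no trouble. Everything else is bookkeeping: the corollary is really just the statement that one-to-one-but-not-invertible is a convenient sufficient condition that implies $0 \in \sigma_e(T) \cap \cl{\DD}$, after which Theorems~\ref{The:GonLeMon}, \ref{The:Banach-complex} and \ref{The:Banach-real} do all the work.
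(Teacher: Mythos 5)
Your proposal is correct and follows essentially the same route as the paper: both deduce that $\Ran(T)$ is dense (from recurrence/hypercyclicity) yet not closed (open mapping theorem plus injectivity), conclude $0 \in \sigma_e(T) \cap \cl{\DD}$ (and $0 \in \sigma_e(\til{T}) \cap \cl{\DD}$ in the real case), and then invoke Theorems~\ref{The:GonLeMon}, \ref{The:Banach-complex} and \ref{The:Banach-real}. The only difference is cosmetic ordering of the open-mapping argument and your slightly more explicit treatment of the complexification, which the paper leaves implicit.
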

\begin{proof}
	We have that $T$ is a recurrent operator (weak-mixing implies quasi-rigidity, which implies recurrence in its turn) so $T$ has dense range. Then $\Ran(T)$ is not closed (otherwise $T$ would be invertible by the open mapping theorem and the one-to-one assumption). The previous implies that $0 \in \sigma_e(T) \cap \cl{\DD}$ (and $0 \in \sigma_e(\til{T}) \cap \cl{\DD}$ for the real case).
\end{proof}

\textbf{Second Option ($T_{w,b}$ has a recurrent subspace): by finding explicit subspaces.} In view of Theorem~\ref{The:Banach-sufficient} and since $T_{w,b}$ is quasi-rigid with respect to some increasing sequence $(k_n)_{n\in\NN}$, it is enough to find a decreasing sequence $(E_n)_{n\in\NN}$ of infinite-dimensional closed subspaces of $\ell^p(\NN_0)$ for which
\begin{equation}\label{eq:sufficient}
	\sup_{n\in\NN} \left\|T_{w,b}^{k_n}\res_{E_n}\right\| < \infty.
\end{equation}
We use \eqref{eq:T^{-1}.discontinuous} which again comes from \eqref{eq:C-type.chaos} and the fact that $(v_N)_{N\in\NN}$ is a summable sequence (and hence convergence to $0$). Fix $M \geq \sup_{j \in \NN} |w_j| > 0$ and construct recursively a strictly increasing sequence of natural numbers $(l_n)_{n\in \NN}$ such that
\[
(k_n-1) < (b_{l_n+1}-b_{l_n}) \quad \text{ and } \quad M^{k_n-1} \leq \prod_{j=b_{l_n}+1}^{b_{l_n+1}-1} |w_j| \quad \text{ for all } n \in \NN.
\]
This selection can be done since by \eqref{eq:T^{-1}.discontinuous} we can always choose a big enough $l_n \in \NN$. Now, for each $n \in \NN$ consider the closed subspace
\[
E_n := \cl{ \lspan\{ e_{b_{l_m+1}-1} : m \geq n \} }.
\]
Hence, for any fixed $n \in \NN$ and given $x = \sum_{m\geq n} a_m \cdot e_{b_{l_m+1}-1} \in E_n$ we have that
\[
T_{w,b}^{k_n}(x) = \sum_{m\geq n} a_m \cdot \frac{\prod_{j=b_{l_m}+1}^{b_{l_m}+k_n-1} w_j}{\prod_{j=b_{l_m}+1}^{b_{l_m+1}-1} w_j} \cdot e_{b_{l_m}+k_n-1},
\]
which implies that
\[
\left\|T_{w,b}^{k_n}(x)\right\|_p^p \leq \sum_{m\geq n} \left(|a_m| \cdot \frac{M^{k_n-1}}{\prod_{j=b_{l_m}+1}^{b_{l_m+1}-1} |w_j|} \right)^p \leq \sum_{m\geq n} |a_m|^p = \|x\|_p^p,
\]
so $\|T_{w,b}^{k_n}\res_{E_n}\| \leq 1$ and \eqref{eq:sufficient} holds. Theorem~\ref{The:Banach-sufficient} yields the existence of a {\em recurrent subspace} for the operator $T_{w,b}$. By Corollary~\ref{Cor:C-t.sufficient} the C-type operator $T_{w,\varphi,b,v}$ has a {\em hypercyclic subspace}.

\begin{example}
	The previous arguments, together with the work and examples exhibited in \cite{Menet2017,GriMaMe2021} allow us to construct (Devaney) {\em chaotic} operators on every $\ell^p(\NN_0)$-space, having a {\em hypercyclic subspace}, which can be chosen to be:
	\begin{enumerate}[--]
		\item not {\em $\Uc$-frequently hypercyclic}, see \cite{Menet2017};
		
		\item {\em frequently hypercyclic} but not {\em ergodic}, see \cite[Examples~7.7 and 7.19]{GriMaMe2021};
		
		\item {\em $\Uc$-frequently hypercyclic} but not {\em frequently hypercyclic}, see \cite[Example~7.11]{GriMaMe2021};
		
		\item {\em topologically mixing} but not {\em $\Uc$-frequently hypercyclic}, see \cite[Example~7.16]{GriMaMe2021}.
	\end{enumerate}
\end{example}

\subsection{Open problems}

The following open problems could be interesting:

\begin{question}\label{Q:KitChan}
	Can Theorem~\ref{The:Banach-sufficient} be proved via the K. Chan approach (see \cite{Chan1999,Chan2001})?
\end{question}

There also exists a {\em hypercyclic spaceability theory} for operators acting on Fr\'echet spaces. However, for Fr\'echet spaces we loose the tool of the essential spectrum, so that this theory is much weaker than the one presented here for Banach spaces. Indeed, and as far as we know, there does not exist a general characterization of the weakly-mixing operators acting on Fr\'echet spaces that admit a hypercyclic subspace, even though some sufficient conditions about the existence of hypercyclic subspaces, such as Theorem~\ref{The:Montes}, are still true in the Fr\'echet setting (see \cite[Chapter~10, Section~10.5]{GrPe2011}). In view of that we propose the problem:

\begin{question}\label{Q:Frechet}
	Is (a proper variation of) Theorem~\ref{The:Banach-sufficient} still true for Fr\'echet spaces?
\end{question}

When we use the term ``{\em proper variation}'' we refer to using, in the Fr\'echet setting, the proper notion of ``{\em equicontinuity}'' instead of the original ``{\em equiboundedness}'' used in this paper. In particular, we really believe that the works \cite{Menet2011,Menet2013,Menet2014} have developed a sufficiently powerful theory of basic sequences, on Fr\'echet spaces with a continuous norm, in order to extend the proof of Theorem~\ref{The:Banach-sufficient} to this more general class of spaces.\\[-5pt]

Note also that an affirmative answer for Question~\ref{Q:KitChan} could lead to a positive solution for Question~\ref{Q:Frechet}, simpler than the constructive proof showed in Section~\ref{Sec:4Basic}, by using the techniques related to left-multiplication operators (acting on the algebra of linear operators) and tensor products (see \cite{BoMaPe2004,BoGro2012} for the hypercyclic and frequently hypercyclic cases respectively).\\[-5pt]

The following and last problem seems to be more intriguing:

\begin{question}
	Is Corollary~\ref{Cor:hyp.iff.rec} still true for weakly-mixing operators on Fr\'echet spaces?
\end{question}

Outside the {\em weak-mixing} context, it is an open problem to characterize when a hypercyclic operator that is not weakly-mixing admits a hypercyclic subspace; see \cite[Question~7]{Gilmore2020}. With the theory developed in this paper we can at least show the following:

\begin{proposition}
	Let $X$ be a complex (resp.\ real) separable Banach space and let $T \in \Lc(X)$ be a hypercyclic but not weakly-mixing operator. The following statements are equivalent:
	\begin{enumerate}[{\em(i)}]
		\item $T$ has a recurrent subspace;
		
		\item there exists an infinite-dimensional closed subspace $E \subset X$ and an increasing sequence of integers $(l_n)_{n\in\NN}$ such that $T^{l_n}x \to x$ for all $x \in E$;
		
		\item[{\em(ii')}] there exists an infinite-dimensional closed subspace $E \subset X$ and an increasing sequence of integers $(l_n)_{n\in\NN}$ such that $T^{l_n}x \to 0$ for all $x \in E$;
		
		\item there exists an infinite-dimensional closed subspace $E \subset X$ and an increasing sequence of integers $(l_n)_{n\in\NN}$ such that $\sup_{n\in\NN} \|T^{l_n}\res_E\| < \infty$;
		
		\item the essential spectrum of $T$ (resp.\ $\til{T}$) intersects the closed unit disk $\cl{\DD}$.
		
	\end{enumerate}
\end{proposition}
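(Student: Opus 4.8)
The plan is to mirror the structure of Theorem~\ref{The:Banach-complex}, observing that the only place in that proof where the \emph{weakly-mixing} hypothesis on $T$ was used — beyond what is needed to have $T$ quasi-rigid — was in invoking Theorem~\ref{The:Banach-sufficient} (which only requires quasi-rigidity) for the final implication (iv)$\Rightarrow$(ii). Since a hypercyclic operator is automatically recurrent, and recurrent operators are quasi-rigid on separable Banach spaces by \cite[Theorem~2.5]{GriLoPe2022}, every hypercyclic $T$ — whether or not weakly-mixing — is quasi-rigid, so Theorem~\ref{The:Banach-complex} (complex case) and part (b) of Theorem~\ref{The:Banach-real} (real case) apply verbatim and give the equivalences (i)$\Leftrightarrow$(ii)$\Leftrightarrow$(iii)$\Leftrightarrow$(iv). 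This already proves the proposition except for the additional equivalence with (ii$'$).

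First I would dispatch (ii$'$)$\Rightarrow$(iii): if $T^{l_n}x\to 0$ for all $x\in E$, then each orbit $(T^{l_n}x)_n$ is bounded, and by Banach--Steinhaus applied to the family $\{T^{l_n}\res_E:n\in\NN\}$ we get $\sup_n\|T^{l_n}\res_E\|<\infty$, which is (iii). The remaining direction, (iv)$\Rightarrow$(ii$'$), is where I expect the only real work, so let me spell it out. Suppose $\sigma_e(T)\cap\cl{\DD}\neq\varnothing$ (resp.\ $\sigma_e(\til T)\cap\cl\DD\neq\varnothing$). Since $T$ is hypercyclic it is recurrent, so by Corollary~\ref{Cor:left.essential} we have $\sigma_e(T)=\sigma_{\ell e}(T)$, and we may pick $\lambda\in\sigma_{\ell e}(T)\cap\cl\DD$; then $T-\lambda$ is not left-Fredholm, and by \cite[Proposition~D.3.4]{BaMa2009} there is an infinite-dimensional closed subspace $E\subset X$ and a compact operator $R$ with $(T-R)\res_E=\lambda I\res_E$, hence $\|(T-R)^n\res_E\|\leq 1$ for all $n$. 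Now I would follow exactly the ``$T$ satisfies the Hypercyclicity Criterion'' side of the proof of Theorem~\ref{The:Montes} as reproduced in \cite[Lemma~8.16]{BaMa2009}: $T$ is hypercyclic, so it satisfies the Hypercyclicity Criterion with respect to \emph{some} sequence $(k_n)_{n\in\NN}$ (for a single operator hypercyclicity suffices for this, by \cite{BesPe1999}; note we do \emph{not} need weak-mixing here, only that there is \emph{a} sequence along which the Criterion holds), so along that sequence there is a dense set on which $T^{k_n}y\to 0$. Writing $T^{k_n}=(T-R)^{k_n}+R_n$ with $R_n$ compact and shrinking $E$ to a non-increasing sequence $(E_n)$ of finite-codimensional subspaces with $\|R_n\res_{E_n}\|\leq1$ via \cite[Lemma~8.13]{BaMa2009}, one gets $\sup_n\|T^{k_n}\res_{E_n}\|\leq 2$; then the Montes-type construction (Theorem~\ref{The:Montes}, via \cite[Lemma~8.16]{BaMa2009}, or \cite[Theorem~10.29]{GrPe2011}) produces an infinite-dimensional closed subspace $F$ and a subsequence $(l_n)$ of $(k_n)$ with $T^{l_n}x\to 0$ for all $x\in F$, which is (ii$'$). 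In the real case, replace $\sigma_e$-arguments by passing to $\til T$ exactly as in the proof of Theorem~\ref{The:Banach-real}, using Proposition~\ref{Pro:real.divergent} for the forward implications and Lemma~\ref{Lem:Bill} to descend a subspace for $\til T$ to one for $T$.

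The main obstacle — and it is a mild one — is the subtlety already flagged after Theorem~\ref{The:Montes}: the Hypercyclicity Criterion for $T$ and the left-Fredholm-derived bound must be arranged with respect to the \emph{same} sequence $(k_n)$. This is handled precisely as in \cite[Lemma~8.16]{BaMa2009}: one fixes the sequence $(k_n)$ along which $T$ satisfies the Criterion first, and \emph{then} performs the compact-perturbation/codimension-reduction step along that same $(k_n)$ to get $\sup_n\|T^{k_n}\res_{E_n}\|<\infty$ — the decomposition $T^{k_n}=(T-R)^{k_n}+R_n$ works for every $n$ simultaneously, so no incompatibility arises. Everything else is a direct transcription of the already-established proofs, so no new ideas beyond those of Theorems~\ref{The:Montes}, \ref{The:Banach-complex} and \ref{The:Banach-real} are required.
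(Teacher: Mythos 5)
Your overall route is the same as the paper's: hypercyclicity gives quasi-rigidity, so Theorems~\ref{The:Banach-complex} and \ref{The:Banach-real} yield (i)$\Leftrightarrow$(ii)$\Leftrightarrow$(iii)$\Leftrightarrow$(iv), Banach--Steinhaus gives (ii')$\Rightarrow$(iii), and (iv)$\Rightarrow$(ii') is a rerun of (iv)$\Rightarrow$(ii) with a dense set of vectors whose suborbits tend to $0$. However, your key step for (iv)$\Rightarrow$(ii') is genuinely wrong as written: you claim that, being hypercyclic, $T$ ``satisfies the Hypercyclicity Criterion with respect to \emph{some} sequence $(k_n)$, by \cite{BesPe1999}''. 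The B\`es--Peris theorem says the opposite of what you need: satisfying the Criterion with respect to some sequence is \emph{equivalent} to being weakly-mixing, and the proposition explicitly assumes that $T$ is \emph{not} weakly-mixing, so $T$ satisfies the Criterion with respect to no sequence whatsoever. What the argument really needs -- and what the paper invokes via the proof of \cite[Theorem~10.29]{GrPe2011} -- is the much weaker fact that every hypercyclic operator admits a dense set $X_0$ and an increasing sequence $(k_n)$ with $T^{k_n}y\to 0$ for all $y\in X_0$. This is elementary: take $x\in\HC(T)$, put $X_0=\orb(x,T)$, and choose $k_n$ with $\|T^{k_n}x\|\leq \frac{1}{n\,\max\{1,\|T\|\}^{n}}$; then $\|T^{k_n}(T^mx)\|\leq\|T\|^{m}\|T^{k_n}x\|\to 0$ for each fixed $m$. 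With this replacement the compact-perturbation/codimension-reduction step and the Montes-type construction go through exactly as you describe, and the real case is handled as you indicate.

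A second, smaller error of the same flavour: you justify quasi-rigidity of $T$ by ``hypercyclic $\Rightarrow$ recurrent, and recurrent operators are quasi-rigid by \cite[Theorem~2.5]{GriLoPe2022}''. Recurrence is \emph{not} known to imply quasi-rigidity -- that is precisely the $T\oplus T$-problem after which the cited paper is named, and Theorem~2.5 there concerns the weak-mixing property. The correct (and sufficient) statement is that hypercyclicity itself implies quasi-rigidity, which is \cite[Proposition~2.9]{GriLoPe2022} and is what the paper cites. Both defects are repairable, but as written your proof rests on results invoked exactly in the regime where they fail or are open.
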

\begin{proof}
	Recall that every hypercyclic operator is quasi-rigid by \cite[Proposition~2.9]{GriLoPe2022}. Then the standard equivalences (i) $\Leftrightarrow$ (ii) $\Leftrightarrow$ (iii) $\Leftrightarrow$ (iv) follow from Theorem~\ref{The:Banach-complex} and \ref{The:Banach-real}; the implication (ii') $\Rightarrow$ (iii) follows from the Banach-Steinhaus theorem; and (iv) $\Rightarrow$ (ii') by the proof of (iv) $\Rightarrow$ (ii) showed in Theorem~\ref{The:Banach-complex} combined with \cite[Proof of Theorem~10.29]{GrPe2011}, since by hypercyclicity there exist a dense set $X_0 \subset X$ and an increasing sequence $(k_n)_{n\in\NN}$ of positive integers such that $T^{k_n}x \to 0$ for all $x \in X_0$.
\end{proof}

\section*{Funding}

This work was supported by the Spanish Ministerio de Ciencia, Innovaci\'on y Universidades, grant number FPU2019/04094; and by MCIN/AEI/10.13039/501100011033/FEDER, UE Projects PID2019-105011GB-I00 and PID2022-139449NB-I00.

\section*{Acknowledgments}

The author would like to thank Juan B\`es, Antonio Bonilla and Karl Grosse-Erdmann for suggesting the topic; also thank Sophie Grivaux and Alfred Peris for their valuable advice and numerous readings of the manuscript. Moreover, the author wants to specially thank Sophie Grivaux for the personal communication of the proof of Lemma~\ref{Lem:Bill} exposed in Remark~\ref{Rem:Bill}. The result and this proof were originally presented by W. B. Johnson to Sophie Grivaux.

$\ $\\

\textsc{Antoni L\'opez-Mart\'inez}: Universitat Polit\`ecnica de Val\`encia, Institut Universitari de Matem\`atica Pura i Aplicada, Edifici 8E, 4a planta, 46022 Val\`encia, Spain.\\

e-mail: alopezmartinez@mat.upv.es\\

\end{document}